\numberwithin{equation}{section}
\let\cal\mathcal
\def\Ascr{{\cal A}}
\def\Bscr{{\cal B}}
\def\Cscr{{\cal C}}
\def\Escr{{\cal E}}
\def\Fscr{{\cal F}}
\def\Hscr{{\cal H}}
\def\Lscr{{\cal L}}
\def\Mscr{{\cal M}}
\def\Nscr{{\cal N}}
\def\Oscr{{\cal O}}
\def\Tscr{{\cal T}}
\def\Uscr{{\cal U}}
\def\Vscr{{\cal V}}
\let\blb\mathbb
\def\CC{{\blb C}}
\def \AA{{\blb A}}
\def \ZZ{{\blb Z}}
\def \NN{{\blb N}}
\def \RR{{\blb R}}
\def \HH{{\blb H}}
\def \HC{{\operatorname{HH}}}
\def \DGMod{{\operatorname{DGMod}}}
\def\Sh{\operatorname{Sh}}
\def\id{\text{id}}
\def\Id{\operatorname{id}}
\def\Der{\operatorname{Der}}
\def\ctimes{\mathbin{\hat{\otimes}}}
\def\Mod{\operatorname{Mod}}
\def\mod{\operatorname{mod}}
\def\gr{\operatorname{gr}}
\def\gr{\operatorname {gr}}
\def\Spec{\operatorname {Spec}}
\def\Ext{\operatorname {Ext}}
\def\Hom{\operatorname {Hom}}
\def\End{\operatorname {End}}
\def\Tr{\operatorname {Tr}}
\def\ker{\operatorname {ker}}
\def\End{\operatorname {End}}
\def\id{{\operatorname {id}}}
\def\r{\rightarrow}
\def\l{\leftarrow}
\DeclareMathOperator{\HEnd}{\mathcal{E}\mathit{nd}}
\DeclareMathOperator{\td}{td}
\DeclareMathOperator{\Td}{td}
\let\invlim\projlim
\newtheorem{lemma}{Lemma}[section]
\newtheorem{proposition}[lemma]{Proposition}
\newtheorem{theorem}[lemma]{Theorem}
\newtheorem{corollary}[lemma]{Corollary}
\newtheorem{lemmas}{Lemma}[subsection]
\newtheorem{propositions}[lemmas]{Proposition}
\newtheorem{theorems}[lemmas]{Theorem}
\theoremstyle{definition}
\newtheorem{example}[lemma]{Example}
\newtheorem{examples}[lemmas]{Example}
\newtheorem{definitions}[lemmas]{Definition}
\theoremstyle{remark}
\newtheorem{remark}[lemma]{Remark}
\newtheorem{remarks}[lemmas]{Remark}
\newdimen\uboxsep \uboxsep=1ex
\def\uboxn#1{\vtop to 0pt{\hrule height 0pt depth 0pt\vskip\uboxsep
\hbox to 0pt{\hss #1\hss}\vss}}
\def\uboxs#1{\vbox to 0pt{\vss\hbox to 0pt{\hss #1\hss}
\vskip\uboxsep\hrule height 0pt depth 0pt}}
\def\Gl{\operatorname{Gl}}
\def\poly{\operatorname{poly}}
\def\Hoch{\operatorname{Hoch}}
\def\coord{\operatorname{coord}}
\def\aff{\operatorname{aff}}
\def\poly{\operatorname{poly}}
\DeclareMathOperator{\HDer}{\mathcal{D}\mathit{er}}
\keywords{Deformation quantization, Atiyah classes}
\subjclass{Primary 14F99, 14D99} 
\author{Damien Calaque} 
\address{Universit\'e de Lyon \\ Universit\'e Lyon 1 \\ Institut Camille Jordan CNRS UMR 5208 \\
43 boulevard du 11 novembre 1918 \\ F-69622 Villeurbanne Cedex \\ France}
\email{calaque@math.univ-lyon1.fr}
\author{Michel Van den Bergh}
\address{Departement WNI\\Universiteit Hasselt\\ Universitaire Campus\\ Building D\\ 3590 
Diepenbeek\\ Belgium} 
\thanks{The second author is a director of research at the FWO} 
\thanks{The results of this paper were partially obtained while the
second author was visiting the Universit\'e Claude Bernard at Lyon. He hereby
thanks the latter for its kind hospitality.
}
 \email{michel.vandenbergh@uhasselt.be} 
\title{Hochschild cohomology and Atiyah classes}
\begin{document}

\maketitle

\begin{abstract}
  In this paper we prove that on a smooth algebraic
 variety the HKR-morphism twisted by
  the square root of the Todd genus gives an isomorphism between the
  sheaf of poly-vector fields and the sheaf of poly-differential
  operators, both considered as derived Gerstenhaber algebras. In
  particular we obtain an isomorphism between Hochschild cohomology
  and the cohomology of poly-vector fields which is compatible with
  the Lie bracket and the cupproduct. The latter compatibility is an
  unpublished result by Kontsevich.

  Our proof is set in the framework of Lie algebroids and so applies
  without modification in much more general settings as well.
\end{abstract}

\setcounter{tocdepth}{1}

\tableofcontents

\section{Introduction}

In the body of the paper we will work over a ringed site over a field
of charactristic zero~$k$. Thus our results are for example applicable to 
stacks.  However in this
introduction we will state our results for a commutatively ringed space
$(X,\Oscr_X)$.

By definition a \emph{Lie algebroid} on $X$ is a sheaf of Lie algebras
$\Lscr$ which is an $\Oscr_X$-module and is equipped with an action
$\Lscr\times \Oscr_X\r \Oscr_X$ with properties mimicking those of the
tangent bundle (see \S\ref{ref-3.2-9} for a more precise definition).  Throughout $\Lscr$ will be a locally free Lie
algebroid over $(X,\Oscr_X)$ of constant rank~$d$.

The
advantage of the Lie algebroid framework is that it allows one to
treat the algebraic/complex analytic and $C^\infty$-case in a uniform way. 
\begin{example}
  The following are examples of (locally free) Lie algebroids.
\begin{enumerate}
\item The sheaf of vector fields on a $C^\infty$-manifold.
\item The sheaf of holomorphic vector fields on a complex analytic variety.
\item The sheaf of algebraic vector fields on a smooth algebraic variety.
\item $\Oscr_X\otimes \frak{g}$ where $\frak{g}$ is the Lie algebra
of an algebraic group acting on a smooth algebraic variety $X$. 
\end{enumerate}
\end{example}
\begin{example} Assume that $X$ is an affine integral singular algebraic variety. Then
$\Tscr_X$ is not locally free. However it is always possible to construct
a locally free sub Lie algebroid $\Lscr\subset\Tscr_X$. So our setting
applies to some extent to the singular case as well. 
\end{example}
The Atiyah class $A(\Lscr)$ of $\Lscr$ is 
the element of $\Ext^1_{X}(\Lscr,\Lscr^\ast\otimes_X\Lscr)$ which is the
obstruction against the existence of an $\Lscr$-connection on $\Lscr$.
The $i$'th ($i>0$) scalar Atiyah class $a_i(\Lscr)$ is defined as
\[
\operatorname{Alt}\Tr(A(\Lscr)^i)\in H^i(X,(\wedge^i \Lscr)^\ast)
\]
In the $C^\infty$ or affine case we have $a_i(\Lscr)=0$ as the
cohomology groups $ H^i(X,(\wedge^i \Lscr)^\ast)$ vanish.  If $X$ is a
Kahler manifold and $\Lscr$ is the sheaf of holomorphic vector fields
then $a_i(\Tscr_X)$ coincides with the $i$'th Chern class of $\Tscr_X$
(see e.g.\ \cite[(1.4)]{Kapranov2}).

The Todd class of $\Lscr$ is defined as
\[
\td(\Lscr)=\det(q(A(\Lscr)))
\]
where
\[
q(x)=\frac{x}{1-e^{-x}}
\]
%
%
One sees without difficulty that $\td(\Lscr)$ can be
expanded formally in terms of $a_i(\Lscr)$.  

\medskip

The sheaf of $\Lscr$-poly-vector fields on $X$ is defined as
$T^\Lscr_{\poly}(\Oscr_X)=\oplus_i \wedge^i \Lscr$. This agrees with the  
standard definitions in case $\Lscr$ is one of the
variants of the tangent bundle described in the examples above. It is
easy to prove that $T^\Lscr_{\poly}(\Oscr_X)$ is a sheaf of Gerstenhaber algebras
on~$X$. 

In the case that $X$ is a $C^\infty$-manifold Kontsevich introduced
the sheaf of so-called poly-differential operators on $X$. This is basically
a localized version of the Hochschild complex.\footnote{It is not
entirely trivial to make the Hochschild complex into a (pre)sheaf
as the assignment $U\mapsto C^\ast(\Gamma(U,\Oscr_U))$ is not compatible
with restriction.}
It is
straightforward to construct a Lie algebroid generalization
$D^{\Lscr}_{\poly}(\Oscr_X)$ of this concept as well (see \cite{cal} or
\S\ref{ref-3.2.2-11}). Like $T^\Lscr_{\poly}(\Oscr_X)$, $D^{\Lscr}_{\poly}(\Oscr_X)$ is equipped
with a Lie bracket and an associative cupproduct but these operations
satisfy the Gerstenhaber axioms only up to globally defined
homotopies (see e.g.\ \cite{VG}). 

The so-called Hochschild-Kostand-Rosenberg map is a 
quasi-isomorphism between $T^\Lscr_{\poly}(\Oscr_X)$ and
$D^\Lscr_{\poly}(\Oscr_X)$ \cite{cal,ye2}. This paper is concerned with
the failure of the HKR-map to be compatible with the Lie brackets and
cupproducts on $T^\Lscr_{\poly}(\Oscr_X)$ and
$D^\Lscr_{\poly}(\Oscr_X)$.

\medskip

Let $D(X)$ be the derived category of sheaves of $k$-vector spaces. This
category is equipped with a symmetric monoidal structure given by the
derived tensor product. As indicated above
$D^{\Lscr}_{\poly}(X)$ is a Gerstenhaber algebra in $D(X)$. 
We have the following result
\begin{theorem} (see \S\ref{ref-9-114}) \label{ref-1.3-0}
 The map in
  $D(X)$
\begin{equation}
\label{ref-1.1-1}
T^\Lscr_{\poly}(\Oscr_X)\xrightarrow{\operatorname{HKR}\circ (\td(\Lscr)^{1/2}\wedge-)}
D^\Lscr_{\poly}(\Oscr_X)
\end{equation}
is an isomorphism of Gerstenhaber algebras in $D(X)$.
\end{theorem}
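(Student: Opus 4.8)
The proof will combine two ingredients: a local statement that the HKR map becomes a morphism of Gerstenhaber algebras (in the homotopy sense) once it is twisted by a suitable element coming from the formal geometry of the situation, and a global identification of that element with $\td(\Lscr)^{1/2}$. In more detail, I would first pass to the formal/jet-theoretic picture: over the infinitesimal neighbourhood of the diagonal, or equivalently after choosing (locally) an $\Lscr$-connection and exponentiating, one can trivialise $\Lscr$ as a free module and reduce the comparison of $T^\Lscr_{\poly}$ and $D^\Lscr_{\poly}$ to the comparison of polyvector fields and polydifferential operators on a formal polydisc with coefficients in a Harrison/Chevalley–Eilenberg-type dga. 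On the formal polydisc the relevant statement is essentially Kontsevich's formality theorem together with its $\infty$-enhancement by Tamarkin/Willwacher: the HKR map, corrected by the action of the ``wheels'' class, is an $L_\infty$- and in fact homotopy-Gerstenhaber-morphism. The key point, which goes back to Kontsevich's claim and to the computations of Dolgushev and others, is that the obstruction cocycle measuring the failure of HKR to respect both brackets is, after the usual manipulations with the propagators, exactly the square root of a Todd-type characteristic class built from the curvature of the chosen connection — here the Atiyah class $A(\Lscr)$.

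\textbf{Key steps, in order.} (1) Set up the (co)simplicial or formal-geometry resolution: build an acyclic resolution of both $T^\Lscr_{\poly}(\Oscr_X)$ and $D^\Lscr_{\poly}(\Oscr_X)$ by sheaves of dg Gerstenhaber algebras (in the $D_\infty$/homotopy sense) whose sections over a contractible chart are the polyvector fields, resp. polydifferential operators, on the formal $\Lscr$-disc; this is the Fedosov-type resolution adapted to Lie algebroids, and it reduces the global problem to a problem over $\widehat{\Oscr}_{X,\Lscr}$ equivariant for the group of formal $\Lscr$-automorphisms. (2) On the formal disc, invoke the known $G_\infty$-formality morphism (Kontsevich/Tamarkin), which is $GL_d$-equivariant; its weight-graded expansion shows that the zeroth-order term is HKR and that the corrections are given by graphs with nontrivial loops, whose total contribution is the exponential of a series in the traces $\Tr(A(\Lscr)^i)$. (3) Identify this series: matching the graph weights (the Kontsevich integrals of the relevant configuration-space forms) with the Taylor coefficients of $q(x)=x/(1-e^{-x})$ shows the correction is precisely $\td(\Lscr)^{1/2}$ — this is where the ``$1/2$'' appears, as in the Cattaneo–Felder–Willwacher computations and in Ramadoss's work on the Todd class and the Duflo element. (4) Globalise: since every step is equivariant, the local homotopy-Gerstenhaber isomorphism descends to a map of Gerstenhaber algebras in $D(X)$, and it agrees with $\operatorname{HKR}\circ(\td(\Lscr)^{1/2}\wedge-)$ by construction; that it is an isomorphism is immediate because HKR already is a quasi-isomorphism and wedging by an invertible (degree-zero, leading term $1$) element is invertible.

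\textbf{The main obstacle.} The genuinely hard part is step (3): pinning down that the characteristic class produced by the wheel graphs is the square root of the Todd genus and not some other series in the $a_i(\Lscr)$. There are two sub-difficulties. First, one must show the correction factor is a \emph{determinant} of a universal power series evaluated on $A(\Lscr)$ — i.e. multiplicative in the eigenvalues — which requires understanding how the loop graphs decompose into disjoint wheels and why mixed contributions cancel; this is the Lie-algebroid analogue of the statement that the Duflo/Todd correction is $\det$ of something. Second, one must compute the universal power series itself. Rather than evaluating Kontsevich's configuration-space integrals directly, the cleanest route is to reduce to a one-dimensional (rank-one) computation, where everything is explicit and the answer $q(x)^{1/2}$ can be read off from the comparison of the bracket and the cup product on $\wedge^\bullet\Lscr$ versus the Hochschild complex — this is classically the appearance of the Duflo element, whose infinitesimal character is $j(x)^{1/2}$ with $j$ the Jacobian of $\exp$, which rewrites as $\det q(A(\Lscr))^{1/2}=\td(\Lscr)^{1/2}$. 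Making the rank-one reduction rigorous in the sheaf-theoretic, Lie-algebroid setting — and checking that the homotopy-Gerstenhaber structure, not just the $L_\infty$-structure, is controlled by the same element — is where most of the work lies; the rest is bookkeeping with the resolution and equivariance.
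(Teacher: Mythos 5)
Your overall architecture (Fedosov-type resolution via formal geometry for Lie algebroids, local formality on the formal disc, twisting by the Maurer--Cartan form, wheel graphs producing a determinant of a universal series in the Atiyah class, descent by equivariance) is the same as the paper's. But there are two genuine gaps. First, in your step (2) you invoke a ``known $G_\infty$-formality morphism (Kontsevich/Tamarkin)''. No such input is available in the form you need: it is unknown whether Kontsevich's $L_\infty$-morphism lifts to a $G_\infty$-morphism, and while Tamarkin's morphism is $G_\infty$, it comes with no explicit graph expansion, so one cannot extract from it the wheel contributions and hence cannot identify the correction factor with $\td(\Lscr)^{1/2}$ (the paper explicitly defers the $G_\infty$-route to a companion paper and notes it yields a strictly less precise result). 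The proof here instead uses only Kontsevich's $L_\infty$-morphism for the bracket, and handles the cup product by a separate ingredient: the compatibility of the twisted morphism with tangent cohomology up to an explicit homotopy $H$ built from the configuration-space integrals over the path $\xi$ in the Eye (Proposition \ref{ref-8.1-112}, after Manchon--Torossian), together with a verification that $H$ descends to the affine coordinate space (Lemma \ref{ref-9.1.1-117}). Without one of these two inputs your map is only an $L_\infty$-quasi-isomorphism and the multiplicative half of the Gerstenhaber statement is unproved.

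Second, your proposed rank-one reduction in step (3) cannot determine the universal power series. For a Lie algebroid of rank one the scalar Atiyah classes $a_l(\Lscr)$ live in $H^l(X,\wedge^l\Lscr^\ast)=0$ for $l\ge 2$, so a rank-one computation is blind to every Taylor coefficient of the series beyond the linear one; and reducing instead to the classical Duflo case ($X$ a point, $\Lscr=\frak g$) presupposes exactly the universality and multiplicativity you are trying to establish. The paper avoids any such reduction: it enumerates the surviving graphs (using the vanishing of $W_\Gamma$ on the bad subgraphs of \cite[\S 7.3.1.1, \S 7.3.3.1]{Ko3}) to show they are unions of wheels attached to $\gamma$, carries out the sign and partition combinatorics to get $\tilde{\Uscr}_{\omega,1}=\operatorname{HKR}(\det(e^\Theta)\wedge-)$, and then inputs the actual wheel weights $W_l=-(-1)^{(l+1)l/2}l\ss_l$ (Lemma \ref{weightlemma}, via Cattaneo--Felder--Willwacher or the Stokes computation of \cite{wheelweight}). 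Note also that this yields the \emph{modified} Todd class $\widetilde{\td}$ built from $x/(e^{x/2}-e^{-x/2})$, not $\td$; an additional argument, showing that $e^{-a_1(\Lscr)/4}\wedge-$ is a Gerstenhaber automorphism of $T^\Lscr_{\poly}(\Oscr_X)$ in $D(X)$, is needed to pass to the ordinary Todd class, and a further descent argument is needed for base fields not containing $\RR$. Neither point appears in your proposal.
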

Applying the hypercohomology functor $\HH^\ast(X,-)$ we get
\begin{corollary} \label{ref-1.4-2} The map
\begin{equation}
\label{ref-1.2-3}
\bigoplus_{i,j}H^j(X,\wedge^i \Lscr)\xrightarrow{\operatorname{HKR}\circ (\td(\Lscr)^{1/2}\wedge-)} 
\HH^\ast (X,D^\Lscr_{\poly}(\Oscr_X))
\end{equation}
is an isomorphism of Gerstenhaber algebras. 
\end{corollary}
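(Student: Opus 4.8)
The plan is to deduce this directly from Theorem \ref{ref-1.3-0} by applying the hypercohomology functor. Since the map in \eqref{ref-1.1-1} is a morphism in $D(X)$ which is an isomorphism of Gerstenhaber algebras there, and hypercohomology $\HH^\ast(X,-)=\bigoplus_n \Hom_{D(X)}(k, -[n])$ is a lax symmetric monoidal functor from $D(X)$ to graded $k$-vector spaces, the image of \eqref{ref-1.1-1} under $\HH^\ast(X,-)$ is automatically a morphism of graded algebras (for the cupproduct) compatible with the induced Lie brackets. The only point requiring verification is the identification of the source: one must check that $\HH^\ast(X, T^\Lscr_{\poly}(\Oscr_X))=\HH^\ast(X,\bigoplus_i\wedge^i\Lscr)$ is $\bigoplus_{i,j}H^j(X,\wedge^i\Lscr)$ as a Gerstenhaber algebra, where the bracket and product on the left are the hypercohomology images of the (sheaf-level, strict) Gerstenhaber structure on $T^\Lscr_{\poly}(\Oscr_X)$.

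First I would recall that $\wedge^i\Lscr$ is a locally free $\Oscr_X$-module, so it is already a single sheaf (a complex concentrated in degree zero); hence $\HH^n(X,\wedge^i\Lscr)=H^n(X,\wedge^i\Lscr)$ in the classical sense, giving the stated direct-sum decomposition of the underlying graded vector space. Next I would observe that the wedge product $\wedge^i\Lscr\otimes_{\Oscr_X}\wedge^j\Lscr\to\wedge^{i+j}\Lscr$ is a genuine map of sheaves, so the induced cupproduct on $\bigoplus_{i,j}H^j(X,\wedge^i\Lscr)$ is the usual cupproduct in sheaf cohomology composed with the exterior product; this is the standard Gerstenhaber multiplication on poly-vector field cohomology. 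For the Lie bracket one uses that the Schouten–Nijenhuis bracket on $T^\Lscr_{\poly}(\Oscr_X)$ is likewise a strict (if only $k$-linear, not $\Oscr_X$-linear) sheaf map $\wedge^i\Lscr\otimes_k\wedge^j\Lscr\to\wedge^{i+j-1}\Lscr$, so it induces a bracket on hypercohomology; one checks this agrees with the classical bracket on $\bigoplus_{i,j}H^j(X,\wedge^i\Lscr)$, which follows because both are computed from the same Čech (or injective-resolution) representatives.

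On the target side, $\HH^\ast(X,D^\Lscr_{\poly}(\Oscr_X))$ needs no reinterpretation — it is literally the hypercohomology of the complex of poly-differential operators, which in the classical tangent-bundle case is Hochschild cohomology $\HC^\ast(X)$, and its Gerstenhaber structure in $D(X)$ descends to hypercohomology by the lax monoidality argument above. Since applying a lax symmetric monoidal functor to an isomorphism of Gerstenhaber algebras yields an isomorphism of Gerstenhaber algebras, \eqref{ref-1.2-3} is such an isomorphism, and no step presents a genuine obstacle: the content is entirely in Theorem \ref{ref-1.3-0}, and the corollary is a formal consequence. If anything, the only mild subtlety is bookkeeping the grading — the cohomological degree $j$ from sheaf cohomology and the ``poly-vector'' degree $i-1$ combine into the single Gerstenhaber degree, and one should state explicitly that \eqref{ref-1.2-3} is graded for the total degree $i+j$ on the left and the corresponding Hochschild degree on the right.
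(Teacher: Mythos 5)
Your proposal is correct and follows exactly the paper's route: the paper derives Corollary \ref{ref-1.4-2} from Theorem \ref{ref-1.3-0} simply by applying the hypercohomology functor $\HH^\ast(X,-)$, which is precisely your lax-monoidality argument spelled out in more detail. The extra bookkeeping you supply (identifying $\HH^\ast(X,T^\Lscr_{\poly}(\Oscr_X))$ with $\bigoplus_{i,j}H^j(X,\wedge^i\Lscr)$ together with its classical Gerstenhaber structure) is a harmless and accurate elaboration of what the paper leaves implicit.
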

Let us restrict to the setting where $X$ is a smooth algebraic variety
and $\Lscr=\Tscr_X$.  In that case it follows from the proof of
\cite[Thm 3.1(1)]{VdB35} together with \cite[Thm 7.5.1]{lowenvdb2} that the
righthand side of \eqref{ref-1.2-3} can be viewed as the Hochschild
cohomology $\HC^\ast(X)$  of $X$ (in the sense that it controls for example the deformation theory of $\Mod(\Oscr_X)$). So we may rephraze Corollary \ref{ref-1.4-2}
as
\begin{corollary}
\label{ref-1.5-4}
There is an isomorphism of Gerstenhaber algebras
\begin{equation}
\label{kontsevich}
\bigoplus_{i,j}H^j(X,\wedge^i\Tscr_X)\xrightarrow{\operatorname{HKR}\circ (\td(\Lscr)^{1/2}\wedge-)}
\operatorname{HH}^\ast(X)
\end{equation}
\end{corollary}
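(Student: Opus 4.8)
The plan is to obtain Corollary~\ref{ref-1.5-4} as a specialization of Corollary~\ref{ref-1.4-2}. Taking $\Lscr=\Tscr_X$ on a smooth algebraic variety $X$, Corollary~\ref{ref-1.4-2} already provides an isomorphism of Gerstenhaber algebras
\[
\bigoplus_{i,j}H^j(X,\wedge^i\Tscr_X)\xrightarrow{\;\operatorname{HKR}\circ(\td(\Lscr)^{1/2}\wedge-)\;}\HH^\ast(X,D^{\Tscr_X}_{\poly}(\Oscr_X)),
\]
with the displayed formula for the map. So the entire content of the corollary is the identification of the target, \emph{together with its Gerstenhaber structure}, with the Hochschild cohomology $\operatorname{HH}^\ast(X)$.

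To carry this out I would proceed in two steps. First, recall that $D^{\Tscr_X}_{\poly}(\Oscr_X)$ is by construction (see \S\ref{ref-3.2.2-11}) a presheaf-of-complexes model for the Hochschild cochains of $\Oscr_X$, carrying the cup product and Gerstenhaber bracket, and that these operations make it a Gerstenhaber algebra object in $D(X)$. The only delicate point --- flagged in the footnote in the introduction --- is that $U\mapsto C^\ast(\Gamma(U,\Oscr_U))$ is not literally functorial in $U$, so one cannot naively compare the hypercohomology of $D^{\Tscr_X}_{\poly}(\Oscr_X)$ with a global Hochschild complex. Second, I would invoke the cited results to resolve this: by (the proof of) \cite[Thm 3.1(1)]{VdB35} the complex $D^{\Tscr_X}_{\poly}(\Oscr_X)$ restricted to a smooth affine open computes the Hochschild cohomology of that open, and by \cite[Thm 7.5.1]{lowenvdb2} the hypercohomology $\HH^\ast(X,D^{\Tscr_X}_{\poly}(\Oscr_X))$ is canonically the Hochschild cohomology of the abelian category $\Mod(\Oscr_X)$ --- that is, $\operatorname{HH}^\ast(X)$ in the sense that it governs the deformation theory of $\Mod(\Oscr_X)$ --- and this identification is compatible with the respective cup products.

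The main obstacle is the last compatibility assertion: one must check that the zig-zag of quasi-isomorphisms relating $D^{\Tscr_X}_{\poly}(\Oscr_X)\in D(X)$ to the complex computing $\operatorname{HH}^\ast(X)$ is a morphism of Gerstenhaber algebras \emph{in} $D(X)$, i.e.\ respects cup product and bracket up to coherent homotopy rather than merely inducing an isomorphism of graded vector spaces. For the cup product this is supplied by \cite[Thm 7.5.1]{lowenvdb2}; for the bracket one argues that both sides are the tangent Lie algebra of the same formal deformation problem, namely that of $\Mod(\Oscr_X)$, so the bracket --- being determined by the associated obstruction theory --- is transported correctly. Granting this, the corollary follows by composing the Gerstenhaber isomorphism of Corollary~\ref{ref-1.4-2} with the identification $\HH^\ast(X,D^{\Tscr_X}_{\poly}(\Oscr_X))\cong\operatorname{HH}^\ast(X)$, and the explicit description of the composite as $\operatorname{HKR}\circ(\td(\Lscr)^{1/2}\wedge-)$ is inherited unchanged from Theorem~\ref{ref-1.3-0}.
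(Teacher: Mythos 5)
Your proposal matches the paper's own treatment: Corollary \ref{ref-1.5-4} is obtained there exactly as a rephrasing of Corollary \ref{ref-1.4-2} with $\Lscr=\Tscr_X$, identifying $\HH^\ast(X,D^{\Tscr_X}_{\poly}(\Oscr_X))$ with $\operatorname{HH}^\ast(X)$ via the proof of \cite[Thm 3.1(1)]{VdB35} together with \cite[Thm 7.5.1]{lowenvdb2}, the latter understood ``in the sense that it controls the deformation theory of $\Mod(\Oscr_X)$.'' Your additional care about transporting the cup product and bracket across that identification only makes explicit what the paper leaves implicit, so the argument is essentially the same.
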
 
A version of this result which refers only to the cupproduct was
proved by Kontsevich (see \cite[Thm 5.1]{Caldararu2}). 
For the cupproduct one can use Swan's definition of Hochschild cohomology
\cite{Swan}
\begin{equation}
\label{ref-1.3-5}
\operatorname{HH}^i(X)=\Ext^i_{X\times X}(\Oscr_X,\Oscr_X)
\end{equation}
as Yekutieli \cite{ye6,ye2} shows that there is an isomorphism
\begin{equation}
\label{ref-1.4-6}
\HH^i(X,D_{\poly}(\Oscr_X))\r \Ext^i_{X\times X}(\Oscr_X,\Oscr_X) 
\end{equation}
which is compatible with the cuppproduct on the left and the Yoneda
product on the right.

\begin{remark}
The algebra isomorphism \eqref{kontsevich} is part of a more general
conjecture by Caldararu \cite{Caldararu2} which
involves also the Hochschild \emph{homology} of $X$. Other parts of
this conjecture were proved by Markarian and Ramadoss \cite{Markarian,Ramadoss}.\end{remark}
\begin{remark} The cupproduct on $T^\Lscr_{\poly}(\Oscr_X)$ and
  $D^\Lscr_{\poly}(\Oscr_X)$ is $\Oscr_X$-linear and hence these objects can
  also be considered as algebras in $D(\Mod(\Oscr_X))$. Likewise the
  map \eqref{ref-1.1-1} can be viewed as an isomorphism in
  $D(\Mod(X))$.

The cupproduct on $T^\Lscr_{\poly}(\Oscr_X)$ is commutative and the
cupproduct on $D^\Lscr_{\poly}(\Oscr_X)$ is commutative up to a
homotopy given by the bullet product \cite{VG}. However the latter is
\emph{not} $\Oscr_X$-linear.  Hence $D^\Lscr_{\poly}(\Oscr_X)$ is not
commutative in $D(\Mod(\Oscr_X))$ and thus Theorem \ref{ref-1.3-0}
does \emph{not} hold in $D(\Mod(\Oscr_X))$ even if we consider only
the cupproduct.  

This situation is reminiscent of the Duflo isomorphism
$S\frak{g}\r U\frak{g}$ which only becomes an algebra isomorphism
after taking invariants.  The analogue of taking invariants in our
setting is taking global sections. 

We thank Andrei Caldararu
  for bringing this point to our attention.
\end{remark}

If we look only at the Lie algebra structure we actually prove a
result which is somewhat stronger than Theorem \ref{ref-1.3-0}.  Let
$\operatorname{HoLieAlg}(X)$ be the category of sheaves of DG-Lie
algebras with quasi-isomorphisms inverted.
\begin{theorem} \label{ref-1.6-7} (see \S\ref{ref-6.4-88})
  \label{ref-1.6-8} The  isomorphism \eqref{ref-1.1-1} between
  $T^\Lscr_{\poly}(X)$ and $D^{\Lscr}_{\poly}(X)$ is obtained from
an isomorphism in
  $\operatorname{HoLieAlg}(X)$.
\end{theorem}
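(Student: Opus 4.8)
The plan is to upgrade the Hochschild–Kostant–Rosenberg quasi-isomorphism to an $L_\infty$-quasi-isomorphism that is compatible with the Lie brackets, by importing Kontsevich's formality machinery into the Lie-algebroid setting. Concretely, I would first recall that, locally, the sheaf $D^\Lscr_{\poly}(\Oscr_X)$ looks like the Hochschild complex of a formal power series ring, equipped with its Gerstenhaber (in particular DG-Lie) structure; Kontsevich's formality theorem provides an $L_\infty$-quasi-isomorphism $\mathcal{U}\colon T_{\poly}\to D_{\poly}$ whose first Taylor coefficient is the HKR map. The issue is that $\mathcal{U}$ is not $GL_d$-equivariant (it involves a choice of coordinates), and to globalize it one uses the Fedosov-type resolution: one replaces $X$ by a formal geometry construction where one has a flat connection, applies the Kontsevich map fiberwise, and then shows the result descends. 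In the Lie-algebroid language this formal geometry is governed by a jet bundle / PBW-type resolution attached to $\Lscr$, and the relevant structure group is $GL_d$.

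Second, I would make precise the "twist" by $\td(\Lscr)^{1/2}$. The point, going back to Kontsevich's own analysis of the globalization (and Dolgushev's work on formality for general manifolds), is that the naive globalization of $\mathcal{U}$ fails to be compatible on the nose, and the discrepancy is measured by the characteristic class built from the Atiyah class — more precisely the obstruction is the image of $\td(\Lscr)^{1/2}$ under a suitable map. So the strategy is: (i) construct the globalized $L_\infty$-morphism $\mathcal{U}^{\mathrm{glob}}$ from $T^\Lscr_{\poly}(\Oscr_X)$ to $D^\Lscr_{\poly}(\Oscr_X)$ as a morphism of sheaves of DG-Lie algebras; (ii) show that its "class" in the relevant obstruction-theoretic sense differs from that of $\operatorname{HKR}$ by precisely $\td(\Lscr)^{1/2}\wedge -$; (iii) conclude that $\operatorname{HKR}\circ(\td(\Lscr)^{1/2}\wedge -)$ is the first Taylor coefficient of $\mathcal{U}^{\mathrm{glob}}$, hence that \eqref{ref-1.1-1} is realized by a genuine $L_\infty$-quasi-isomorphism, i.e.\ an isomorphism in $\operatorname{HoLieAlg}(X)$.

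The cleanest packaging, which I would adopt, is to work throughout with the Hochschild cochain side replaced by a small explicit DG-Lie model built from $\Lscr$ and its jet algebra, so that all the operations (Kontsevich's weights, the Atiyah class, the Todd twist) live in one place and the "globalization" is just a matter of taking sections invariant under a connection. One then proves two statements: that this model computes $D^\Lscr_{\poly}(\Oscr_X)$ as a sheaf of DG-Lie algebras (this is essentially the Lie-algebroid HKR of \cite{cal,ye2} enhanced to the DG-Lie level), and that Kontsevich's $L_\infty$-morphism, fed the flat connection, yields the desired map with the Todd correction. Because only the Lie (not the full Gerstenhaber) structure is at stake here, one may discard all the operadic data pertaining to the cup product and use the lighter $L_\infty$ formalism, which simplifies the bookkeeping considerably compared to the proof of Theorem \ref{ref-1.3-0}.

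The main obstacle, as I see it, is step (ii): identifying the obstruction class of the globalized Kontsevich morphism with $\td(\Lscr)^{1/2}$. This requires a careful comparison of two things that a priori live in different worlds — Kontsevich's graph-weight integrals on the one hand, and the characteristic-class expression $\det(q(A(\Lscr)))^{1/2}$ on the other — and matching them forces one to reprove, in the Lie-algebroid formal geometry, the fact that the relevant wheel-type Kontsevich weights reproduce the Todd series (the statement that underlies the Duflo isomorphism). I would handle this by reducing to the computation over the formal disk with its standard $GL_d$-action, where this weight-vs-Todd identity is known, and then transporting it along the Fedosov/jet resolution; keeping track of the square root (rather than the full Todd class) is the delicate point, and is exactly the place where working with DG-Lie rather than $G_\infty$ structures pays off, since the square root is what the Lie-theoretic Duflo calculation naturally produces.
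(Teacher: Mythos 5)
Your proposal follows essentially the same route as the paper: Kontsevich's local $L_\infty$-quasi-isomorphism is globalized through the jet bundle $J\Lscr$ and a Fedosov-type resolution over the (affine) coordinate space, twisted by the Maurer--Cartan form of the flat connection, descended along the $\mathfrak{gl}_d(k)$-action (which is exactly where the vanishing properties (P4) and (P5) enter, a sharper input than mere $\Gl_d$-equivariance), and the first Taylor coefficient of the resulting morphism is identified with $\operatorname{HKR}\circ(\td(\Lscr)^{1/2}\wedge-)$ by the explicit enumeration of wheel graphs and the evaluation of their weights as modified Bernoulli numbers. The only cosmetic difference is that you phrase the Todd factor as an ``obstruction class'' for the naive globalization, whereas the paper computes it directly as the value of $\tilde{\Uscr}_{\omega,1}$ on the wheel contributions; the substance is the same.
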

This theorem can be considered as a generalization of global formality
results in \cite{cdh,CF,Dolgushev,Ko3,VdB35,ye3}. Global formality on
the sheaf level is important for deformation theory.  See for example
\cite{caha,Ko10,leje,VdB35, ye3}.

\medskip

The isomorphism in $\operatorname{HoLieAlg}(X)$ is obtained by
globalizing a local formality isomorphism \cite{Ko3,Tamarkin}.
If we take Kontsevich's local formality isomorphism then we obtain
compatibility with cupproduct in Theorem \ref{ref-1.3-0}
from the compatibility of the local
formality isomorphism with tangent cohomology \cite{Ko3,MT,Mochizuki}.
Kontsevich's local formality isomorphism is only defined when the
ground field contains $\RR$ but this is not a problem since we show
that it is sufficient to prove Theorem \ref{ref-1.3-0} over a suitable
extension of the base field. 

\medskip


\medskip

An alternative approach to Theorem \ref{ref-1.3-0} could be to work
directly in the setting of $G_\infty$-algebras.  Unfortunately it is
unknown if Kontsevich's $L_\infty$-morphism can be lifted to a
$G_\infty$-morphism. In \cite{CalVdB} we will use Tamarkin's local
$G_\infty$-formality isomorphism to construct a
$G_\infty$-quasi-isomorphism between $T^\Lscr_{\poly}(X)$ and
$D^\Lscr_{\poly}(X)$. In the case that $\Lscr$ is a tangent bundle
this was proved recently in \cite{dtt} using very different methods.
Like in \cite{dtt} we are unfortunately not able to write down
the resulting isomorphism on hypercohomology. Thus in
this way we obtain a result which is less precise than Corollary
\ref{ref-1.5-4}.  This is why we have decided to publish the current
paper separately.

\section{Acknowledgement} 
This paper is hugely in debt to Kontsevich's fundamental work on formality.
In particular without the many deep results and insights contained in \cite{Ko3} this
paper could not have been written. 

Our proof of the global formality result Theorem \ref{ref-1.6-7} follows
 the general outline of \cite{VdB35} which in turn
was heavily inspired by \cite{ye3}. We use in an essential way an
algebraic version of formal geometry.  Algebraic versions of formal
geometry were introduced independently and around the same time by
Bezrukavikov and Kaledin in \cite{BezKal} and Yekutieli in
\cite{ye3}. The language we use is closer to \cite{ye3}. As a result
various technical statements can be traced back in some form to \cite{ye3}. 

We wish to thank Andrei Caldararu, Vasiliy Dolgushev, Charles Torossian and Amnon
Yekutieli for useful conversations and comments.
\section{Notations and conventions}

Throughout this paper $k$ is a field of characteristic zero.
Unadorned tensorproducts are over $k$. 

Many of the objects we use are equipped with some
kind of topology, but if an object is introduced without a specified
topology we assume that it is equipped with the discrete topology.

If an object carries a natural grading then all constructions associated
to it are implicitly performed in the graded context.  This applies
in particular to completions. 

Since all our constructions are natural in the sense that they do not
depend on any choices we work mostly with rings and modules instead of
with sheaves since this often simplifies the notations. However we
freely sheafify such construction if needed.

On a double (or higher) complex we use the Koszul sign convention with
respect to total degree. 

\section{Preliminaries}

\subsection{Categories of vector spaces}

Below we will work with various enhanced symmetric monoidal categories
of $k$-vector spaces.  Which category we work in will usually be clear
from the context but in order to be precise we list here the various
possibilities.

\subsubsection{Complete topological vector spaces}

For us a complete topological vector space $V$ will be a topological
vector space whose topology is generated by a separated, exhaustive
descending filtration $V=F_0V\supset F_1V\supset\cdots$. This filtration
is however not considered as part of the structure.

The completed
tensor product 
\[
V\ctimes W=\projlim_p (V\otimes W/(F_p V\otimes W+V\otimes F_p W))
\]
makes the category of complete topological vector spaces
into a symmetric monoidal category.

\subsubsection{Filtered complete topological vector spaces}

A filtered complete topological vector space is by definition a
topological vector space $V$, equipped with an ascending separated,
exhaustive filtration $F^m V$ (which is considered part of the
structure) such that each $F^m V$ is a complete topological vector
space and the inclusion maps $F^m V\hookrightarrow F^{m+1} V$ are continuous. 

The (completed) tensor product of two filtered complete topological
vector space $V$ and $W$ is defined by
\[
F^m(V\ctimes W)=\sum_{p+q=m} F^pV\ctimes F^q W
\]
(where the summation sign refers of course to convergent sums). 

\subsubsection{Graded filtered complete topological vector spaces}

Graded filtered complete topological vector spaces are the most
general objects we will encounter below. These are simply graded
objects over the linear category of filtered complete topological
vector spaces.  The DG-Lie algebra of poly-differential operators of
$k[[t_1,\ldots,t_d]]$ (see below) is naturally a DG-Lie algebra over
the category of graded filtered complete topological vector spaces.

\subsection{Lie algebroids}\label{ref-3.2-9}

Below $R$ is a commutative $k$-algebra
and $L$ is a {\em Lie algebroid} over $R$ which is free of rank $d$.
Namely, $L$ is a Lie $k$-algebra equipped with an $R$-module structure and 
a Lie algebra map $\rho:L\to\Der(R)$ such that 
$[l_1,rl_2]=r[l_1,l_2]+\rho(l_1)(r)l_2$ for $l_1,l_2\in L$ and $r\in R$. 
$\rho$ is called the {\it anchor map} and we usually suppress it
from the notations writing $l(r)$ instead of $\rho(l)(r)$ ($l\in L$, $r\in R$). 
In particular, $R\oplus L$ becomes a Lie algebra with bracket given by 
$[(r,l),(r',l')]=(l(r')-l'(r),[l,l'])$. 

Associated to $L$ there are various constructions which are analogous
to constructions occurring for enveloping algebras and rings of
differential operators.  In the next few paragraphs we fix some notations
for them and recall the properties we need. For more information the
reader is referred to \cite{cal,cdh,nt,xu}.

\subsubsection{The enveloping algebra of a Lie algebroid}
\label{ref-3.2.1-10}

Let $UL$ be the enveloping algebra associated to $L$. It is the
quotient of the enveloping algebra associated to the Lie algebra
$R\oplus L$ by the following relations: $r\otimes l=rl$ ($r\in R$,
$l\in R\oplus L$). If we want to emphasize $R$ then we write $U_R L$.
$UL$ has a canonical filtration obtained by respectively assigning
length $0$ and $1$ to elements of $R$ and $L$. We equip $UL$ with the
left $R$-module structure given by the natural embedding $R\r UL$ and 
we view $UL$ as an $R$-bimodule with the same left and right
structure. For this bimodule structure $UL$ is a cocommutative
$R$-coring in the sense that there is a natural cocommutative
coassociative comultiplication $\Delta:UL\r UL\otimes_{R} UL$ and
counit $\epsilon:UL\r R$. Assuming the Sweedler convention the
comultiplication is defined by
\begin{align*}
\Delta(f)&=f\otimes 1 & &\text{for $f\in R$}\\
\Delta(l)&=l\otimes 1+1 \otimes l&&\text{for $l\in L$}\\
\Delta(DE)&=D_{(1)}E_{(1)}\otimes D_{(2)}E_{(2)} &&\text{for $D,E\in UL$}
\end{align*}
Note that it requires some verification to show that this
is well defined. To do this note that $UL\otimes_R UL$ is a right 
$UL\otimes UL$-module in the obvious way. One proves inductively 
on the length of $D$,
expressed as a product of elements of $L$, that in $UL\otimes_R UL$ one has 
\[
(D_{(1)}\otimes D_{(2)})(f\otimes 1-1\otimes f)=0
\]
for $f$ in $R$. It follows immediately that if $E'\otimes E''\in UL\otimes_R UL$
then
\[
(D_{(1)}\otimes D_{(2)})\cdot (E'\otimes E'')\overset{\text{def}}{=}
D_{(1)}E'\otimes D_{(2)}E''
\]
is well defined. 

There is a unique way to extend the anchor map into an algebra morphism $\rho:U(L)\to \End(R)$. 
As before we write $D(r)$ instead of $\rho(D)(r)$ ($D\in UL$, $r\in R$), and then the counit 
on $UL$ is given by
\[
\epsilon(D)=D(1)\,.
\]
$UL$ is a so-called ``Hopf algebroid with anchor'' \cite{xu}.  As we
are in the cocommutative case this is expressed by the property
\[
D_{(1)}(f) D_{(2)}=Df \qquad \text{$(f\in R, D\in UL)$}\,.
\]

\subsubsection{$L$-poly-vector fields and $L$-poly-differential operators}\label{ref-3.2.2-11}

$T_{\poly}^L(R)$ is the Lie algebra of $L$-poly-vector fields
\cite{cal}. I.e.\ it is the graded vector space $\wedge_R (L)[1]$
equipped with the graded Lie bracket obtained by extending the Lie
bracket on $L$. We equip $T_{\poly}^L(R)$ with the standard
cupproduct (which is of degree one with our shifted grading).  In this way
$T_{\poly}^L(R)$ becomes a (shifted) Gerstenhaber algebra. 

$D_{\poly}^L(R)$ is the DG-Lie algebra of $L$-poly-differential operators
\cite{cal}. I.e.\ it is the graded vector space $T_R(UL)[1]$ equipped with
the natural structure of a DG-Lie algebra \cite{cal}. The  Lie bracket on $D_{\poly}^L(R)$ given 
by $[D_1,D_2]_G=D_1\bullet D_2-(-1)^{|D_1||D_2|} D_2\bullet D_1$, where 
\[
D_1\bullet D_2=\sum_{i=0}^{|D_1|} 
(-1)^{i|D_2|}(\id^{\otimes i}\otimes \Delta^{|D_2|}
\otimes \id^{\otimes |D_1|-i})(D_1)
\cdot (1^{\otimes i}\otimes D_2\otimes 1^{\otimes |D_1|-i})\,.
\]
Let $m=1\otimes 1\in D^L_{\poly}(R)_1$. Then 
the differential $d$ on $D_{\poly}^L(R)$ is given by 
\[
d(D)=[m,-]\,. 
\]
For the cupproduct we use the sign-modification by
Gerstenhaber-Voronov \cite{VG}.  This sign-modification is necessary
to make the cohomology of $D_{\poly}^L(R)$ into a (shifted) Gerstenhaber
algebra.  We put
\[
D_1\cup D_2=(-1)^{|D_1||D_2|}D_1\otimes D_2
\]

There is a HKR-theorem relating $T^L_{\poly}(R)$ and $D_{\poly}^L(R)$ \cite{cal}. Namely the map
\begin{equation}\label{ref-3.1-12}
\mu:l_1\wedge\cdots \wedge l_n\mapsto (-1)^{n(n-1)/2}\frac{1}{n!}
\sum_{\sigma\in S_{n+1}}\epsilon(\sigma)l_{\sigma(1)}\otimes\cdots\otimes l_{\sigma(n)}
\end{equation}
defines a quasi-isomorphism between $(T^L_{\poly}(R),0)$ and $(D_{\poly}^L(R),d)$
which induces an isomorphism of shifted Gerstenhaber algebras on cohomology. 
Note
that for this last fact to be true one needs the unconventional sign in \eqref{ref-3.1-12}.

\subsubsection{Algebraic functoriality}

The formation of $UL$, $T_{\poly}^L(R)$ and $D_{\poly}^L(R)$ depends
functorially on $L$ in a suitable sense.  
\begin{definitions} \label{algebraicmorphism}
An {\it algebraic morphism} of Lie algebroids 
\[
(R,L)\longrightarrow (T,M)
\]
is a pair $(\varrho,\ell)$ of an algebra morphism $\varrho:R\to T$ and a Lie algebra morphism 
$\ell:L\to M$ such that for any $r\in R$ and any $l\in L$, 
$$
\varrho\big(l(r)\big)=\ell(l)(\varrho(r))\quad\textrm{and}\quad\ell(rl)=\varrho(r)\ell(l)\,.
$$
\end{definitions}
For any algebraic morphism $(R,L)\to(T,M)$ there are obvious associated maps 
\begin{align*}
U_RL&\longrightarrow U_TM\,,\\
T_{\poly}^L(R)&\longrightarrow T_{\poly}^M(T)\,,\\
D_{\poly}^L(R)&\longrightarrow D_{\poly}^M(T)\,,
\end{align*}
that are compatible with all algebraic structures. 

\subsubsection{Pairings, the De Rham complex and $L$-connections}
Put $L^\ast=\Hom_R(L,R)$. We identify $\wedge_R^nL^\ast$ with the $R$-dual of $\wedge_R^n L$ 
via the pairing
\begin{equation}
\label{ref-3.2-13}
(\sigma_1\wedge \cdots\wedge \sigma_n,
l_1\wedge \cdots\wedge l_n)=\det \sigma_i(l_j)\,.
\end{equation}
If $\tau\in L^\ast$ then we denote contraction by $\tau$ acting on
$\wedge^n _RL$ by $\tau\wedge-$. I.e.
\[
\tau \wedge (l_1\wedge \cdots\wedge l_n)=\sum_i (-1)^{i-1}  \tau(l_i)( l_1\wedge \cdots
\wedge\hat{l}_i\wedge \cdots \wedge l_n)
\]
We make $\wedge_R L$ into a $\wedge_R L^\ast$-module by extending the $-\wedge-$-action.
I.e.
\[
(\tau_1\wedge\cdots \wedge \tau_m)\wedge (l_1\wedge \cdots\wedge l_n)=
\tau_1\wedge (\tau_2\wedge (\cdots  \wedge(\tau_m\wedge(l_1\wedge \cdots\wedge l_n))\cdots))
\]
An easy verification shows
\begin{equation}
\label{eqadjoint}
(\sigma_1\wedge \cdots\wedge \sigma_n,
l_1\wedge \cdots\wedge l_n)=\langle\sigma_{m+1}\wedge \cdots \wedge\sigma_n,
(\sigma_m\wedge\cdots\wedge\sigma_1)\wedge (l_1\wedge \cdots\wedge l_n)\rangle
\end{equation}
The Lie algebroid analogue for the De Rham complex is a DG-algebra which as
graded algebra is equal to $\wedge_R L^\ast$. 
With the identification \eqref{ref-3.2-13} the differential on $\wedge_R L^\ast$ is given
by the usual formula for differential forms \cite[Prop 2.25(f)]{warner}: 
\begin{equation}\label{ref-3.3-14}
d\omega(l_0,\ldots,l_n)=
\sum_{i=0}^n (-1)^i l_i(\omega(l_0,\ldots,\hat{l}_i,\ldots, l_p))
+\sum_{i<j} (-1)^{i+j} \omega([l_i,l_j],l_0,\ldots,\hat{l}_i,\ldots, \hat{l}_j,\ldots,l_n)\,.
\end{equation}
In other words the anchor map $\rho:L\r \Der_k(R)=\Hom_R(\Omega^1_R,R)$ dualizes to a morphism
of DG-algebras
\begin{equation}
\label{ref-3.4-15}
\rho^\ast:\Omega_R\r \wedge_R L^\ast\,.
\end{equation}
From \eqref{ref-3.3-14} we deduce in particular $(df)(l)=l(f)$ for
$f\in R$, $l\in L$, and if $(l_i)_i$ is an $R$-basis of $L$ then
\[
d(l_k^\ast)(l_i,l_j)=-l_k^\ast([l_i,l_j])\,.
\]
In other words $(\wedge_R L^\ast,d)$ completely encodes the Lie-algebroid 
structure of $L$. 

\begin{remarks} In the literature a
  morphism between Lie algebroids $(R,L)\to(T,M)$ is usually defined
  as a morphism of DG-algebras
  $\eta:\wedge_TM^\ast\longrightarrow\wedge_RL^\ast$.  See e.g.\
  \cite{cattaneo}.  One could call such morphisms ``geometric'' to
  differentiate them from the algebraic ones we use.  We have already
  encountered one geometric morphism, namely \eqref{ref-3.4-15}.
\end{remarks}

If $M$ is an $R$-module then a $L$-connection on $M$ is a map 
$L\otimes M\r M: l\otimes m\mapsto \nabla_l(m)$ with the following
properties: for $l,l_1,l_2\in L$, $m\in M$, $f\in R$ we have
\begin{gather*}
\nabla_l(fm)=l(f)m+f\nabla_l(m)\,, \\
\nabla_{fl}(m)=f\nabla_l(m)\,. 
\end{gather*}
The connection is flat if in addition we have
\[
[\nabla_{l_1},\nabla_{l_2}]=\nabla_{[l_1,l_2]}\,.
\]
In that case $M$ automatically becomes a left $UL$-module. 
Moreover, if $(l_i)_i$ is a basis of $L$ then we put a left $\wedge_R L^\ast$-DG-module
structure on $\wedge_R L^\ast\otimes_R M $ by defining the differential as
\begin{equation}
\label{ref-3.5-16}
\nabla(\omega\otimes m)=d\omega\otimes m+\sum_i l_i^\ast\omega\otimes \nabla_{l_i}(m)\,.
\end{equation}
 Recall that if $C$ is 
a commutative $R$-DG-algebra then a {\it flat connection} on $M$ is a derivation of square zero on 
$C\otimes_RM$ of degree one which makes $C\otimes_RM$ into a DG-$C$-module\footnote{We recall that 
if $M$ is in a category of complete topological vector spaces then one has to write $C\ctimes M$ instead. }.  Thus $\nabla$ is a flat $\wedge_RL^\ast$-connection on $M$. 

\subsubsection{$L$-jets}\label{ref-3.2.6-17}

Let $(UL)_{\le n}$ be the elements of degree $\le n$ for the canonical filtration
on $UL$ introduced in \S\ref{ref-3.2.1-10}. The $L$-$n$-jets
are defined as 
\[
J^nL=\Hom_R((UL)_{\le n},R)
\]
(this is unambiguous, as the left and right $R$-modules structures on
$UL$ are the same, see \S\ref{ref-3.2.1-10}). We also
put
\begin{equation}
\label{ref-3.6-18}
JL=\invlim_n J^nL.
\end{equation}
We now formulate some properties of $JL$. Most of these properties 
hold for $J^nL$ as well. $JL$ has a natural commutative
algebra structure obtained from the comultiplication on $UL$. Thus for
$\phi_1,\phi_2\in JL$, $D\in UL$ we have
\[
(\phi_1\phi_2)(D)=\phi_1(D_{(1)})\phi_2(D_{(2)})\,,
\]
and the unit in $JL$ is given by the counit on $UL$. It is well-known 
that $JL$ has a lot of extra structure which we now elucidate. First 
of all there are two distinct monomorphisms of $k$-algebras 
\begin{align*}
\alpha_1:&R\r JL:r\mapsto (D\mapsto r\epsilon(D))\,,\\
\alpha_2:&R\r JL:r\mapsto (D\mapsto D(r))\,.
\end{align*}
It will be convenient to write $R_i=\alpha_i(R)$
and to view $JL$ as an $R_1-R_2$-bimodule. 

Define $\epsilon:JL\r R$ by $\epsilon(\phi)=\phi(1)$ and put
$J^cL=\ker \epsilon$. It is easy to see that $\epsilon\circ\alpha_1=
\epsilon\circ\alpha_2=\Id_R$. We conclude that
\begin{equation}
\label{ref-3.7-19}
JL=R_1\oplus J^cL=R_2\oplus J^c L
\end{equation}
The filtration on $JL$ induced by \eqref{ref-3.6-18} coincides with the
$J^cL$-adic filtration.  If we filter $JL$ with the $J^cL$-adic
filtration then we obtain
\begin{equation}
\label{ref-3.8-20}
\gr JL=S_{R} L^\ast
\end{equation}
and the $R_1$ and $R_2$-action on the r.h.s. of this equation coincide (here
 and below the letter $S$ stands for ``symmetric algebra''). 

There are also two different commuting actions by derivations
of $L$ on $JL$. Let $l\in L$, $\phi\in JL$, $D\in UL$.
\begin{align*}
{}^1\nabla_l(\phi)(D)&=l(\phi(D))-\phi(lD)\\
{}^2\nabla_l(\phi)(D)&=\phi(Dl)
\end{align*}
Again it will be convenient to write $L_i$ for $L$ acting 
by ${}^i\nabla$. Then ${}^i\nabla$ defines
a flat $L_i$-connection on $JL$, considered as an $R_i$-module. Thus
$JL$ becomes a $UL_1-UL_2$-bimodule (with both $UL_1$ and $UL_2$ acting
on the left). For some of the verifications below we note that the $UL_2$ action on $JL$ takes
the simple form
\[
(D\cdot \phi)(E)= \phi(ED)
\]
(for $D,E\in UL_2$, $\phi \in JL$). 

The induced actions on $\gr JL=S_R L^\ast$ of $l\in L$, considered as an
element of $L_1$ and $L_2$, are given by the contractions $i_{-l}$ and
$i_l$.

\begin{examples} In case $R$ is the coordinate ring of a smooth
affine algebraic variety and $L=\Der_k(R)$ then we may identify $JL$
with the completion of $R\otimes R$ at the kernel of the multiplication
map $R\otimes R\r R$. The two action of $R$ on $JL$ are respectively
$R\otimes 1$ and $1\otimes R$. 

Similarly a derivation on $R$ can be extended to $R\otimes R$ in two
ways by letting it act respectively on the first and second factor. Since
derivations are continuous they act on adic completions and hence
in particular on $JL$.  This provides the two actions of $L$ on $JL$. 
\end{examples}

As ${}^1\nabla_l$ acts by derivation on $JL$ is its easy to see that
the resulting $\wedge_{R_1} L_1^\ast$-DG-module $(\wedge_{R_1}
L_1^\ast\otimes_{R_1} JL,{}^1\nabla)$ (see \eqref{ref-3.5-16}) is
actually a commutative $\wedge_{R_1} L_1^\ast$-DG-algebra. The
following result is well-known.
\begin{propositions}\label{ref-3.2.3-21}
The inclusion $\alpha_2:R\hookrightarrow JL$ defines a quasi-isomorphism
$$
R\longrightarrow\wedge_{R_1} L_1^\ast\otimes_{R_1} JL\,;\,r\longmapsto 1\otimes\alpha_2(r)\,.
$$
\end{propositions}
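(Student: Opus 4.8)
The plan is to reduce the statement to the polynomial Poincaré lemma by filtering the complex $\Cscr:=(\wedge_{R_1}L_1^\ast\otimes_{R_1}JL,{}^1\nabla)$ by the $J^cL$-adic order of the jet factor. First I would check that $r\mapsto 1\otimes\alpha_2(r)$ is a morphism of complexes: the unit $1\in JL$ is the counit $\epsilon$, so $d(1)=0$, while for $l\in L$ and $D\in UL$ one computes ${}^1\nabla_l(\alpha_2(r))(D)=l(D(r))-(lD)(r)=0$, since $\rho\colon UL\to\End(R)$ is an algebra map; hence $1\otimes\alpha_2(r)$ is a cocycle of degree $0$.

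Next, equip $\Cscr$ with the decreasing filtration
\[
W^m\Cscr=\sum_{a+b\ge m}\wedge^a_{R_1}L_1^\ast\otimes_{R_1}(J^cL)^b,
\]
where $(J^cL)^b$ is the $b$-th power of the augmentation ideal; by \eqref{ref-3.7-19}--\eqref{ref-3.8-20} this is the canonical filtration on $JL$ and $\gr JL=S_RL^\ast$. Since $\wedge^a_RL^\ast=0$ for $a>d$ and $JL$ is $J^cL$-adically complete, $W^\bullet$ is exhaustive, Hausdorff and complete, so the associated spectral sequence converges. In the differential \eqref{ref-3.5-16} the term $d\omega\otimes\phi$ strictly raises the $W$-weight, whereas $\sum_i l_i^\ast\wedge\omega\otimes{}^1\nabla_{l_i}(\phi)$ preserves it: on $\gr JL=S_RL^\ast$ the operator ${}^1\nabla_{l_i}$ acts as the contraction $i_{-l_i}$, which lowers the symmetric degree by one, and the factor $l_i^\ast$ raises the form degree by one. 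Therefore
\[
\gr_W\Cscr=(\wedge_RL^\ast\otimes_RS_RL^\ast,\ \delta),\qquad \delta(\omega\otimes\psi)=\sum_i l_i^\ast\wedge\omega\otimes i_{-l_i}(\psi),
\]
where $\wedge^a_RL^\ast\otimes_RS^b_RL^\ast$ sits in weight $a+b$.

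Then I would recognize $(\gr_W\Cscr,\delta)$, up to sign, as the relative polynomial de Rham complex of the free module $L$: writing $x_i=l_i^\ast$, so that $S_RL^\ast=R[x_1,\dots,x_d]$ and $\wedge_RL^\ast=\Omega^\bullet_{R[x]/R}$, one has $\delta=-\sum_i dx_i\wedge\partial_{x_i}$, and the weight is the total (form plus polynomial) degree. By the homogeneous Poincaré lemma over $R$---a closed homogeneous form $\eta$ of weight $m\ge1$ satisfies $\eta=\tfrac1m\,d(\iota_{\theta}\eta)$ for the Euler derivation $\theta=\sum_i x_i\partial_{x_i}$---the weight-$m$ summand of $\gr_W\Cscr$ is acyclic for $m\ge1$, while in weight $0$ it is just $R$ concentrated in form degree $0$. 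Hence the spectral sequence degenerates at $E_1$ with $H^\ast(\Cscr)=R$ in degree $0$; since the map induced by $r\mapsto 1\otimes\alpha_2(r)$ on $\gr^0_W\Cscr=R$ is the identity, the morphism $R\to\Cscr$ is a filtered quasi-isomorphism, hence a quasi-isomorphism.

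The main obstacle is organizational: one must choose the filtration so that the vertical (Koszul/de Rham) part of ${}^1\nabla$, rather than the horizontal differential $d$, governs the associated graded, and one must ensure the spectral sequence converges---this is exactly where the completeness of $JL$ and the boundedness of the form degree by $d$ are used. The only substantive input is the polynomial Poincaré lemma, which is elementary and $R$-linear since $L$ is free; alternatively one may view the whole computation as the ``formal Poincaré lemma'' for the jet-bundle de Rham complex in the sense of algebraic formal geometry.
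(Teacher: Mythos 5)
Your proof is correct and follows essentially the same route as the paper: filter by the $J^cL$-adic filtration (shifted by the form degree so that $d\omega\otimes\phi$ becomes subordinate), identify the associated graded with the Koszul complex $\wedge_RL^\ast\otimes_RS_RL^\ast$ with differential $-\sum_jl_j^\ast\otimes i_{l_j}$, and conclude by its exactness. You merely spell out details the paper leaves implicit (the chain-map check, convergence of the spectral sequence, and the Euler-operator homotopy proving exactness of the associated graded).
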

\begin{proof}
It is easy to see that if $r\in R_2$ then 
${}^1\nabla(1\otimes r)=0$. To prove that we obtain a quasi-isomorphism 
we filter $JL$ by the $J^cL$-adic filtration. We obtain 
the following associated graded complex 
\begin{equation}\label{ref-3.9-22}
0\r R\r S_R L^{\ast} \r L^\ast\otimes_R S_R L^{\ast}
\r \cdots \r \wedge_R^d L^\ast\otimes S_R L^\ast\r 0
\end{equation}
where the differential is given by $-\sum_j l_j^\ast\otimes i_{l_j}$ 
for a basis $(l_j)_j$ of $L$. It is easy to see that \eqref{ref-3.9-22} is exact.
\end{proof}

\subsection{Relative poly-vector fields, poly-differential operators and forms}\label{ref-3.3-23}

\subsubsection{Definitions}

We need relative poly-differential operators and poly-vector fields. So
assume that $A\r B$ is a morphism of commutative DG-$k$-algebras. Then
\begin{align*}
T_{\poly,A}(B)&=\bigoplus_n T^n_{\poly,A}(B)\\
D_{\poly,A}(B)&=\bigoplus_n D^n_{\poly,A}(B)
\end{align*}
where $T^n_{\poly,A}(B)=\bigwedge^{n+1}_B\Der_A(B)$.  Similarly
$D^n_{\poly,A}(B)$ is the set of maps with $n+1$ arguments
$B\otimes_A\cdots \otimes_A B\r B$ which are differential operators
when we equip $B$ with the diagonal $B\otimes_A\cdots \otimes_A
B$-algebra structure. 

If we consider $A$ and $B$ just as algebras then $T_{\poly,A}(B)$ and
$D_{\poly,A}(B)$ are DG-Lie algebras in the usual way. The
differential on $T_{\poly,A}(B)$ is trivial and the differential on
$D_{\poly,A}(B)$ is the restriction of the Hochschild differential. We
denote it by $d_{\Hoch}$. The differential $d_B$ on $B$ induces 
a differential $[d_B,-]$ on $T^n_{\poly,A}(B)$, $D^n_{\poly,A}(B)$ which
commutes with $d_{\Hoch}$ on the latter. The total differentials on
$T^n_{\poly,A}(B)$ and $D^n_{\poly,A}(B)$ are respectively $[d_B,-]$ and
$[d_B,-]+d_{\Hoch}$. 

Recall that one can also consider the DG-algebra $\Omega_{B/A}$ of relative 
differentials\footnote{Here and in the rest of the paper the symbol ``$\Omega$'' 
is used in the sense of continous differentials (since we usually deal with complete 
topological vector spaces). See \cite[\S 5.4]{VdB35} for a more detailed discussion of 
continuous differentials in a slightly restricted case. There are no surprizes. }. 
The differential is $d_B+d_{{\rm DR}}$. 
There is a contraction map between relative one-differentials and relative vector fields: 
it is defined as the $B$-linear map 
$$
\Omega^1_{B/A}\otimes{\rm Der}_A(B)\r B:fdg\otimes\xi\mapsto f\xi(g)\,.
$$

\subsubsection{Relation with $L$-jets}

 Let us begin by the observation that 
\begin{equation}\label{eq-3.3.0}
L_2\r \Der_{R_1}(JL):l\mapsto (\theta\mapsto {}^2\nabla_l(\theta))
\end{equation}
is a Lie algebra morphism.\footnote{Together with $R_2\to JL$ 
this actually is an algebraic morphism of Lie algebroids. } 
\begin{lemmas}\label{ref-3.3.1-24}
\eqref{eq-3.3.0} yields a well-defined isomorphism of $JL$-modules 
\begin{equation}
\label{eqlem331}
JL\otimes_{R_2} L_2\r \Der_{R_1}(JL):\phi\otimes l\mapsto (\theta\mapsto \phi\cdot {}^2\nabla_l(\theta))\,.
\end{equation}
\end{lemmas}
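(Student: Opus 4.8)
The plan is to prove Lemma~\ref{ref-3.3.1-24} by reducing to the associated graded situation via the $J^cL$-adic filtration, which is the standard device already used in the proof of Proposition~\ref{ref-3.2.3-21}. First I would check that the map \eqref{eqlem331} is well-defined: the only thing to verify is $R_2$-linearity in the tensor factor, i.e.\ that $\phi r\otimes l$ and $\phi\otimes rl$ have the same image for $r\in R$. This follows from the Leibniz-type identity ${}^2\nabla_{rl}=\alpha_2(r)\cdot{}^2\nabla_l$ on $JL$, which in turn comes from the formula $({}^2\nabla_l\phi)(D)=\phi(Dl)$ together with the fact that $Drl=D(rl)$ in $UL$ and $\alpha_2(r)$ acts by $\phi\mapsto(D\mapsto\phi(Dr))$; equivalently one cites the footnote's observation that $(R_2,L_2)\to JL$ is an algebraic morphism of Lie algebroids, so \eqref{eq-3.3.0} extends the required way. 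It is also clear the map is $JL$-linear by construction.

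Next I would establish that both sides are free $JL$-modules of rank $d$ and that it suffices to check the map is an isomorphism after passing to the associated graded with respect to the $J^cL$-adic filtration (using that $JL$ is $J^cL$-adically complete, $d$ is finite, and a filtered morphism of complete filtered modules that is a graded isomorphism is itself an isomorphism). On the source, $\gr(JL\otimes_{R_2}L_2)=S_RL^\ast\otimes_R L$ since $L_2$ is free over $R$. On the target I would compute $\gr\Der_{R_1}(JL)$: derivations over $R_1$ of $JL$ are continuous, hence determined by their values on $J^cL$, and modulo the filtration they become $R$-linear derivations of $S_RL^\ast$, i.e.\ $\Der_R(S_RL^\ast)=S_RL^\ast\otimes_R(L^\ast)^\ast=S_RL^\ast\otimes_R L$ (using that $L$ is finite free, so $(L^\ast)^\ast=L$). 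The key point is then to identify the induced graded map: by the stated fact that ${}^2\nabla_l$ acts on $\gr JL=S_RL^\ast$ as the contraction $i_l$, the element $\phi\otimes l$ maps, on the graded level, to $\bar\phi\cdot i_l$, and since $\{i_{l_j}\}_j$ for a basis $(l_j)$ of $L$ is precisely the $S_RL^\ast$-basis of $\Der_R(S_RL^\ast)$ dual to $(l_j^\ast)$, the graded map $S_RL^\ast\otimes_RL\to\Der_R(S_RL^\ast)$ sending $\bar\phi\otimes l_j\mapsto\bar\phi\,i_{l_j}$ is manifestly an isomorphism.

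The main obstacle I anticipate is the clean identification $\gr\Der_{R_1}(JL)\cong\Der_R(\gr JL)$: a priori the filtration on $\Der_{R_1}(JL)$ (say, by order of vanishing on powers of $J^cL$, or the filtration making it $\Hom_{JL}$ of the continuous Kähler differentials) need not interact with the $J^cL$-adic filtration in the naive way, and one has to be a little careful that a derivation killing $R_1$ and lowering the $J^cL$-adic filtration by at most zero induces a well-defined derivation of $S_RL^\ast$, and that conversely every such graded derivation lifts. Here I would lean on the description in the Examples paragraph — in the geometric case $JL$ is the completion of $R\otimes R$ along the diagonal ideal $I$, $R_1=R\otimes 1$, and $\Der_{R_1}(JL)$ is the module of $I$-adically continuous vertical vector fields, whose graded is $\Der_R(\gr_I(R\otimes R))=\Der_R(S_RL^\ast)$ — and transport this to the general algebroid setting using only that $JL$ is $J^cL$-adically complete with $\gr JL=S_RL^\ast$ a polynomial ring over $R$ in $d$ variables, so that continuous $R_1$-derivations are freely determined by their action on the $d$ generators. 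Once that identification is in hand, everything else is the routine graded computation sketched above.
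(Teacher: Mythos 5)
Your proposal is correct and follows exactly the paper's own strategy: the published proof consists of the single sentence ``It suffices to check that this is the case for the associated graded modules for the $J^cL$-adic filtration, which is easy.'' You have simply filled in the details the authors leave implicit (well-definedness, completeness of the filtration, the identification of $\gr\Der_{R_1}(JL)$ with $\Der_R(S_RL^\ast)$, and the fact that ${}^2\nabla_l$ induces $i_l$ on the graded level), all of which are sound.
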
 
\begin{proof}
It suffices to check that this is the case for the associated graded modules for 
the $J^cL$-adic filtration, which is easy.
\end{proof}

Let $D_{R_1}(JL)$ be the ring of differential operators of $JL$ relative to $R_1$, 
considered as a $R_2$-module. Since the $L_2$-action on $JL$ commutes with the $R_1$-action 
we obtain a ring homomorphism
\begin{equation}\label{ref-3.10-25}
UL_2\r D_{R_1}(JL):D\mapsto (\theta\mapsto D(\theta))\,.
\end{equation}
It is easy to check that together with $R_2\to JL$ this gives a Hopf algebroid homomorphism 
$(R_2,UL_2)\to(JL,D_{R_1}(JL))$. 
\begin{lemmas}\label{ref-3.3.2-26}
\eqref{ref-3.10-25} yields a well-defined isomorphism of $JL$-modules. 
\begin{equation}\label{ref-3.11-27}
JL\ctimes_{R_2} UL_2\r D_{R_1}(JL):\phi\otimes D\mapsto (\theta\mapsto \phi D(\theta))\,.
\end{equation}
\end{lemmas}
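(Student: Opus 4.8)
The plan is to mimic the proof of the preceding Lemma \ref{ref-3.3.1-24}: both sides of \eqref{ref-3.11-27} carry natural filtrations, and it suffices to check that the map is an isomorphism on the associated graded level. On the source side, filter $JL$ by the $J^cL$-adic filtration and $UL_2$ by its canonical (length) filtration from \S\ref{ref-3.2.1-10}, and give $JL\ctimes_{R_2}UL_2$ the tensor product filtration. By \eqref{ref-3.8-20} one has $\gr JL=S_RL^\ast$, while $\gr UL_2=S_RL$ by the PBW theorem for Lie algebroids (the analogue of Poincar\'e--Birkhoff--Witt, valid since $L$ is free over $R$; see \cite{cal,nt,xu}). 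On the target side, filter $D_{R_1}(JL)$ by order of differential operators; the associated graded is the symmetric algebra $S_{JL}\Der_{R_1}(JL)$ of the tangent module, again because $JL$ is (formally) smooth over $R_1$ — concretely, by Lemma \ref{ref-3.3.1-24}, $\Der_{R_1}(JL)\cong JL\otimes_{R_2}L_2$ is free of rank $d$ over $JL$, so $\gr D_{R_1}(JL)=S_{JL}(JL\otimes_{R_2}L_2)=JL\ctimes_{R_2}S_RL$.

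First I would check that the map \eqref{ref-3.11-27} is filtered: since ${}^2\nabla_l$ lowers $J^cL$-adic degree by at most one (indeed the induced action of $l\in L_2$ on $\gr JL=S_RL^\ast$ is the contraction $i_l$, which lowers symmetric degree by exactly one) and more generally $D\in (UL_2)_{\le n}$ acts on $JL$ as a differential operator of order $\le n$, the composite $\phi D(\theta)$ respects the filtrations. Second, I would identify the associated graded map: on symbols it sends $\bar\phi\otimes\bar D\in \gr JL\ctimes_{R_2}\gr UL_2 = JL\ctimes_{R_2} S_RL$ to the principal symbol of $\theta\mapsto \phi D(\theta)$, which under the identification $\gr D_{R_1}(JL)=S_{JL}\Der_{R_1}(JL)$ and Lemma \ref{ref-3.3.1-24} is exactly $\bar\phi$ times the image of $\bar D$ under $S_RL=\gr UL_2\to S_{JL}(JL\otimes_{R_2}L_2)$ induced by \eqref{eq-3.3.0}. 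So $\gr$ of \eqref{ref-3.11-27} is the $JL$-linear extension of the symmetric algebra map induced by the $JL$-module isomorphism of Lemma \ref{ref-3.3.1-24}, hence an isomorphism. Finally, since both filtrations are exhaustive and (after completion on the source) complete and separated — here the completed tensor product $\ctimes$ on the left is precisely what makes the source complete for the tensor filtration, matching the completeness of $D_{R_1}(JL)$ in the $J^cL$-adic direction — a standard filtered-to-graded argument upgrades the isomorphism on $\gr$ to an isomorphism of the filtered objects themselves.

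The main obstacle I anticipate is bookkeeping around the completion: one must make sure the topology on $JL\ctimes_{R_2}UL_2$ (built from the $J^cL$-adic topology on $JL$, with $UL_2$ discrete) is exactly the one for which the filtered-to-graded principle applies, and that $D_{R_1}(JL)$ with its order filtration is complete in the matching sense. This is the reason the completed tensor product $\ctimes$, rather than the ordinary $\otimes$, appears in the statement — unlike in Lemma \ref{ref-3.3.1-24}, where $\Der_{R_1}(JL)$ is a module of rank $d$ and no completion is needed. Once the correct topologies are pinned down, the verification that $\gr$ of the map is the isomorphism of Lemma \ref{ref-3.3.1-24} (symmetrized) is routine, so I would keep that part brief in the write-up, exactly as the authors did for the companion lemma.
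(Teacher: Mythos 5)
Your overall strategy is the paper's: pass to associated graded objects and reduce everything to Lemma \ref{ref-3.3.1-24}. However, the filtration you put on the source does not work as written. If you give $JL\ctimes_{R_2}UL_2$ the tensor-product filtration built from the $J^cL$-adic filtration on $JL$ and the length filtration on $UL_2$, then its associated graded is $S_RL^\ast\otimes_R S_RL$, not $JL\ctimes_{R_2}S_RL$: the identity ``$\gr JL\ctimes_{R_2}\gr UL_2=JL\ctimes_{R_2}S_RL$'' in your second paragraph is false, since $\gr JL=S_RL^\ast$ by \eqref{ref-3.8-20}. (The two filtrations also run in opposite directions, which makes the ``tensor product filtration'' ambiguous to begin with.) Meanwhile your computation of the associated graded of the target for the order filtration correctly keeps the \emph{full} ring $JL$ as coefficients, $\gr D_{R_1}(JL)=S_{JL}\Der_{R_1}(JL)$. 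So the two graded objects you produce do not match, and the filtered-to-graded principle does not close the argument in the form you set it up.

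The repair is exactly what the paper does: do not take $\gr$ in the $J^cL$-adic direction at all. Filter the left-hand side only by the $JL$-linear extension of the length filtration on $UL_2$, i.e.\ $F_n=JL\ctimes_{R_2}(UL_2)_{\le n}$, and the right-hand side by order of differential operators. The associated graded map is then the $JL$-linear symmetric-algebra map $JL\ctimes_{R_2}SL_2=S_{JL}(JL\otimes_{R_2}L_2)\r S_{JL}(\Der_{R_1}(JL))$ induced by \eqref{eq-3.3.0}, which is an isomorphism by Lemma \ref{ref-3.3.1-24}; both filtrations are exhaustive and bounded below, so the map itself is an isomorphism. The $J^cL$-adic topology still matters, but only in the definition of $\ctimes$ and of $D_{R_1}(JL)$, not as a second filtration of which one takes $\gr$. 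The remaining ingredients of your write-up (well-definedness, the symbol computation, the observation that $(UL_2)_{\le n}$ acts by operators of order $\le n$) are fine once this is corrected, and your use of the $J^cL$-adic $\gr$ is the right tool for Lemma \ref{ref-3.3.1-24} itself --- just not for this one.
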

\begin{proof}
It is easily verified that this map is well-defined. To prove that it is an isomorphism 
we extend the natural filtration on $UL_2$ to a filtration on the l.h.s. of \eqref{ref-3.11-27} 
and we filter the r.h.s. by order of differential operators. We then obtain a map
\[
JL\ctimes_{R_2} SL_2=S_{JL} (JL\otimes_{R_2} L_2)\r S_{JL}(\Der_{R_1}(JL))
\]
which is induced from the natural map $JL\otimes_{R_2} L_2\r \Der_{R_1}(JL)$.
This map is an isomorphism by Lemma \ref{ref-3.3.1-24}. 
\end{proof}
Using again that the $L_2$-action on $JL$ commutes with the $R_1$-action
we obtain natural DG-Lie algebra morphisms 
\begin{equation}\label{ref-3.12-28}
\begin{split}
T^{L_2}_{\poly}(R_2)&\r T_{\poly, R_1}(JL)\,,\\
D^{L_2}_{\poly}(R_2)&\r D_{\poly, R_1}(JL)\,.
\end{split}
\end{equation}
We obtain the following (see \cite[Lemma 5.1(22)]{ye3}):
\begin{lemmas}
The maps \eqref{ref-3.12-28} induce well-defined isomorphisms of $JL$-modules. 
\begin{align*}
JL\ctimes_{R_2} T^{L_2}_{\poly}(R_2)&\r T_{\poly, R_1}(JL)\\
JL\ctimes_{R_2} D^{L_2}_{\poly}(R_2)&\r D_{\poly, R_1}(JL)\,.
\end{align*}
\end{lemmas}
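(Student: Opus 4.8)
The plan is to follow exactly the strategy of the two preceding lemmas (Lemma~\ref{ref-3.3.1-24} and Lemma~\ref{ref-3.3.2-26}): exhibit each map as a morphism of complete filtered $JL$-modules and check that the associated graded map is an isomorphism, reducing in the end to Lemma~\ref{ref-3.3.1-24}. First I would verify that the two maps are well-defined morphisms of $JL$-modules. For $T^{L_2}_{\poly}(R_2)$ this is essentially immediate since, as noted just before the statement, the $L_2$-action on $JL$ commutes with the $R_1$-action, so the map \eqref{ref-3.12-28} sends $\Der_{R_2}(R_2)=L_2$ into $\Der_{R_1}(JL)$ and extends to exterior powers; $JL$-linearizing gives $JL\ctimes_{R_2}\wedge^{\bullet}_{R_2}L_2\to\wedge^{\bullet}_{JL}\Der_{R_1}(JL)$. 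For $D^{L_2}_{\poly}(R_2)=T_{R_2}(UL_2)[1]$ one uses the ring homomorphism $UL_2\to D_{R_1}(JL)$ of \eqref{ref-3.10-25} and the fact that a poly-differential operator on $R_2$ relative to nothing, tensored up along $R_2\to JL$, lands among poly-differential operators on $JL$ relative to $R_1$; the compatibility of the Hochschild differentials and of the bracket is formal from the Hopf-algebroid morphism $(R_2,UL_2)\to(JL,D_{R_1}(JL))$.

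Next I would set up the filtrations. On the left-hand sides, filter $UL_2$ (hence $D^{L_2}_{\poly}(R_2)=T_{R_2}(UL_2)[1]$) by the canonical length filtration of \S\ref{ref-3.2.1-10}, and filter $\wedge^{\bullet}_{R_2}L_2$ trivially (it is already ``symbol-level''); then extend these to $JL\ctimes_{R_2}(-)$, which is legitimate because $JL$ is a complete filtered module and the completed tensor product respects the filtration. On the right-hand sides, filter $\Der_{R_1}(JL)$ and $D_{\poly,R_1}(JL)$ by order of differential operators. The key input is that passing to associated gradeds turns $UL_2$ into $S_{R_2}L_2$ (PBW for the enveloping algebra of a Lie algebroid) and turns $D_{R_1}(JL)$ into $S_{JL}(\Der_{R_1}(JL))$ (the principal-symbol isomorphism for rings of differential operators), exactly as used in the proof of Lemma~\ref{ref-3.3.2-26}. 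Hence the associated graded of each map becomes
\[
JL\ctimes_{R_2}S_{R_2}(\wedge^{\bullet}_{R_2}L_2)\;\longrightarrow\;S_{JL}(\wedge^{\bullet}_{JL}\Der_{R_1}(JL))\,,
\]
respectively with $S$ replaced by the exterior algebra in the poly-vector-field case, and this is the functor $S$ (or $\wedge$) applied to the map $JL\otimes_{R_2}L_2\to\Der_{R_1}(JL)$, which is an isomorphism by Lemma~\ref{ref-3.3.1-24}. Since $S$ and $\wedge$ preserve isomorphisms (and the relevant modules are flat/free over $JL$ so no $\Tor$ obstruction appears), the associated graded maps are isomorphisms; completeness and exhaustiveness of the filtrations then give that the original maps are isomorphisms.

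The main obstacle, as in the earlier lemmas, is purely bookkeeping: checking that the filtrations on the two sides match up under the maps and that the associated graded genuinely is the symmetric (resp.\ exterior) algebra on the degree-one piece. Concretely, one must be careful that for $D^{L_2}_{\poly}(R_2)=T_{R_2}(UL_2)[1]=\bigoplus_n UL_2^{\otimes_{R_2}(n+1)}$ the natural filtration is the tensor product of the length filtrations in each slot, so that $\gr$ of the $n$-th graded piece is $(S_{R_2}L_2)^{\otimes_{R_2}(n+1)}$, which after tensoring with $JL$ over $R_2$ and using Lemma~\ref{ref-3.3.1-24} matches $(S_{JL}\Der_{R_1}(JL))^{\otimes_{JL}(n+1)}=\gr D_{\poly,R_1}(JL)^n$; one also wants the differential-operator order filtration on $D_{\poly,R_1}(JL)$ to be the one induced slotwise from $D_{R_1}(JL)$. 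None of this is deep, and it is precisely the content cited from \cite[Lemma 5.1(22)]{ye3}; the argument is a routine upgrade of the proof of Lemma~\ref{ref-3.3.2-26} from $UL_2$ to its tensor algebra.
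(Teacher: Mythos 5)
Your proof is correct, and it rests on the same ultimate input as the paper's (Lemma \ref{ref-3.3.1-24} via an associated-graded argument), but it is organized differently. The paper does not re-run any filtration argument at the level of the full poly-differential operator complex: it simply writes $D_{\poly,R_1}(JL)=T_{JL}(D_{R_1}(JL))[1]$, substitutes the already-proved isomorphism $D_{R_1}(JL)\cong JL\ctimes_{R_2}UL_2$ of Lemma \ref{ref-3.3.2-26} as a black box, and uses the purely formal base-change identity $T_{JL}(JL\ctimes_{R_2}UL_2)[1]=JL\ctimes_{R_2}T_{R_2}(UL_2)[1]$ to conclude; the poly-vector field case is handled the same way via Lemma \ref{ref-3.3.1-24} and exterior powers. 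You instead set up the length/order filtrations slotwise on the whole tensor algebra and check that $\gr$ of each side matches, which is where all of the bookkeeping you flag (tensor products of filtrations, flatness so that $\gr$ commutes with $\otimes_{JL}$, identifying $\gr D_{\poly,R_1}(JL)^n$ with $(S_{JL}\Der_{R_1}(JL))^{\otimes_{JL}(n+1)}$) comes from. The paper's route buys you exactly the avoidance of that bookkeeping, since the filtration argument is quarantined inside Lemma \ref{ref-3.3.2-26} where it is done once for a single tensor factor. One small notational point: your displayed associated-graded map $JL\ctimes_{R_2}S_{R_2}(\wedge^{\bullet}_{R_2}L_2)\to S_{JL}(\wedge^{\bullet}_{JL}\Der_{R_1}(JL))$ conflates the two cases (for $D_{\poly}$ it should be the tensor algebra of the symmetric algebra, with no exterior powers; for $T_{\poly}$ just the exterior algebra, with no symmetric algebra); your final paragraph states the correct version, so this is only a slip in the display, not a gap.
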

\begin{proof}
It easy to check that these maps are well-defined. As an example we prove that the 
second map is an isomorphism. We have isomorphisms of vector spaces 
\begin{align*}
D_{\poly, R_1}(JL)&=T_{JL}(D_{R_1}(JL))[1]=T_{JL}(JL\ctimes_{R_2} UL_2 )[1]\\
&=JL\ctimes_{R_2} T_{R_2}(UL_2)[1]=JL\ctimes_{R_2} D_{\poly}^{L_2}(R_2)\,.
\end{align*}
In the first line we have used Lemma \ref{ref-3.3.2-26}. One now easily shows 
that the resulting isomorphism $JL\ctimes_{R_2} D_{\poly}^{L_2}(R_2)\cong D_{\poly, R_1}(JL)$ 
is indeed the morphism given in the statement of the lemma. 
\end{proof}
We also have: 
\begin{lemmas}\label{ref-3.3.4-29}
Let $C$ be a commutative $R_1$-DG-algebra. The canonical maps
\begin{equation}\label{ref-3.13-30}
\begin{split}
(C\ctimes_{R_1} JL) \ctimes_{R_2} T_{\poly}^{L_2}(R_2)&\r 
T_{\poly,C}(C\ctimes_{R_1} JL)\\
(C\ctimes_{R_1} JL) \ctimes_{R_2} D_{\poly}^{L_2}(R_2)&\r 
D_{\poly,C}(C\ctimes_{R_1} JL)
\end{split}
\end{equation}
obtained by linearly extending the canonical maps
\begin{align*}
T_{\poly}^{L_2}(R_2)&\r T_{\poly,C}(C\ctimes_{R_1} JL)\\
D_{\poly}^{L_2}(R_2)&\r D_{\poly,C}(C\ctimes_{R_1} JL)
\end{align*}
are well-defined isomorphisms. If $JL$ carries a flat $C$-connection $\nabla$
which is compatible with the $R_2$-action on $C\ctimes_{R_1} JL$
then $\nabla\otimes \id$
on the left of \eqref{ref-3.13-30} corresponds to $[\nabla,-]$ on the right. 
\end{lemmas}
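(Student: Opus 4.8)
The plan is to reduce the statement to the preceding lemma by base change along the algebra map $\alpha_1\colon R_1\to C$ --- equivalently, to rerun the proof of that lemma with $R_1$ replaced by the DG-algebra $C$ and $JL$ by $C\ctimes_{R_1}JL$. Well-definedness of the two maps in \eqref{ref-3.13-30} is routine, exactly as in the earlier lemmas: the canonical maps $T^{L_2}_{\poly}(R_2)\to T_{\poly,C}(C\ctimes_{R_1}JL)$ and $D^{L_2}_{\poly}(R_2)\to D_{\poly,C}(C\ctimes_{R_1}JL)$ are obtained from $L_2\to\Der_C(C\ctimes_{R_1}JL)$ and $UL_2\to D_C(C\ctimes_{R_1}JL)$, which come from \eqref{eq-3.3.0} and \eqref{ref-3.10-25} by extending the operators $C$-linearly (permissible because the $L_2$- and $UL_2$-actions on $JL$ commute with the $R_1$-action), followed by $(C\ctimes_{R_1}JL)$-linear extension; the only thing to check is that the latter extension factors through $\ctimes_{R_2}$, which is immediate since the generating maps are $R_2$-linear.

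For the isomorphism assertion I would first establish the $C$-analogues of Lemmas \ref{ref-3.3.1-24} and \ref{ref-3.3.2-26}, namely that the maps
\begin{align*}
(C\ctimes_{R_1}JL)\otimes_{R_2}L_2&\longrightarrow\Der_C(C\ctimes_{R_1}JL)\,,\\
(C\ctimes_{R_1}JL)\ctimes_{R_2}UL_2&\longrightarrow D_C(C\ctimes_{R_1}JL)
\end{align*}
induced by $l\mapsto{}^2\nabla_l$ and $D\mapsto(\theta\mapsto D(\theta))$ (extended $C$-linearly) are isomorphisms. Both are proved as in the original: filtering $JL$ $J^cL$-adically induces a filtration on $C\ctimes_{R_1}JL$ whose associated graded is, by \eqref{ref-3.8-20}, the algebra $S_C(C\ctimes_{R_1}L^\ast)$, a polynomial ring in $d$ variables over $C$ since $L$ is free of rank $d$; on associated gradeds the first map becomes the standard description of the $C$-derivations of a polynomial ring as a free module on the partial derivatives (using that $L_2$ acts on $\gr JL$ by contractions, cf.\ the paragraph after \eqref{ref-3.9-22}), and the second becomes $S_{(-)}$ of the first, so both are isomorphisms. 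Granting this, the second isomorphism of the lemma follows from the chain of identifications
\begin{align*}
D_{\poly,C}(C\ctimes_{R_1}JL)&=T_{C\ctimes_{R_1}JL}\bigl(D_C(C\ctimes_{R_1}JL)\bigr)[1]=T_{C\ctimes_{R_1}JL}\bigl((C\ctimes_{R_1}JL)\ctimes_{R_2}UL_2\bigr)[1]\\
&=(C\ctimes_{R_1}JL)\ctimes_{R_2}T_{R_2}(UL_2)[1]=(C\ctimes_{R_1}JL)\ctimes_{R_2}D^{L_2}_{\poly}(R_2)\,,
\end{align*}
and one verifies, as in the preceding lemma, that the composite is the map of the statement; the first isomorphism is obtained identically from $T_{\poly,C}(B)=\bigoplus_n\wedge^{n+1}_B\Der_C(B)$ applied to $B=C\ctimes_{R_1}JL$ together with the $C$-analogue of Lemma \ref{ref-3.3.1-24}.

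For the final assertion, observe first that the hypothesis that $\nabla$ is compatible with the $R_2$-action is precisely what makes $\nabla\otimes\id$ descend to the completed tensor product over $R_2$, so that the comparison is meaningful. Now $\nabla\otimes\id$ on the left and $[\nabla,-]$ on the right are both derivations for the respective cup products, and under the isomorphism the left-hand object is generated over $C\ctimes_{R_1}JL$ by (the image of) $T^{L_2}_{\poly}(R_2)$, resp.\ $D^{L_2}_{\poly}(R_2)$; hence it suffices to compare the two differentials on the degree $-1$ part $C\ctimes_{R_1}JL$, where both restrict to $\nabla$, and on an element $1\ctimes x$ with $x$ in the generating subspace, where $\nabla\otimes\id$ gives $\nabla(1)\ctimes x=0$ while $[\nabla,-]$ applied to the corresponding poly-vector field, resp.\ poly-differential operator, vanishes because that operator is built $C$-linearly from the $L_2$-, resp.\ $UL_2$-action, which commutes with $\nabla$ by the compatibility hypothesis. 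The Leibniz rule then yields the asserted correspondence.

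I expect the main obstacle to be the $C$-analogue of Lemmas \ref{ref-3.3.1-24}--\ref{ref-3.3.2-26}: modules of derivations and rings of differential operators do not commute with base change in general, but here passing to the $J^cL$-adic associated graded reduces everything to a polynomial algebra over $C$, where the relevant identifications are elementary; some mild care is also needed with the completed tensor products, though, as the authors note, there are no surprises there. A secondary point is simply to record what ``compatible with the $R_2$-action'' provides, namely that $\nabla$ is $R_2$-linear and commutes with the induced $UL_2$-action on $C\ctimes_{R_1}JL$, since this is exactly what is invoked in the last paragraph.
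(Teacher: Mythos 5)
Your proof is correct, but it takes a genuinely different route from the paper's. The paper disposes of the isomorphism in one stroke: since $JL$ is formally smooth and formally of finite type over $R$, the canonical base-change map $C\ctimes_{R_1}D_{\poly,R_1}(JL)\r D_{\poly,C}(C\ctimes_{R_1}JL)$ is an isomorphism, and composing this with the preceding lemma ($JL\ctimes_{R_2}D_{\poly}^{L_2}(R_2)\cong D_{\poly,R_1}(JL)$, and its $T_{\poly}$ analogue) gives the statement; the compatibility with flat $C$-connections is then dismissed as ``easily seen''. You instead rerun the proofs of Lemmas \ref{ref-3.3.1-24} and \ref{ref-3.3.2-26} with $C$ in place of $R_1$, reducing via the $J^cL$-adic and order filtrations to a polynomial algebra over $C$, and only then chain the tensor-algebra identifications. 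The two arguments rest on the same underlying input --- formal smoothness of $JL$ over $R_1$ is exactly what makes both the paper's base-change map for $D_{\poly}$ and your symbol-calculus identification $\gr D_C(B)\cong S_B(\HDer_C(B))$ work --- so neither is more general; yours is more self-contained in that it does not require separately justifying that derivations and differential operators commute with the base change $R_1\to C$, while the paper's is shorter because it reuses the previous lemma verbatim. Your argument for the connection statement (both $\nabla\otimes\id$ and $[\nabla,-]$ are derivations for the cup product that agree on the degree $-1$ part $C\ctimes_{R_1}JL$ and on the generators coming from $T_{\poly}^{L_2}(R_2)$, resp.\ $D_{\poly}^{L_2}(R_2)$, the latter because these are built from the $L_2$-, resp.\ $UL_2$-action, which commutes with $\nabla$ by hypothesis) is a valid and usefully explicit expansion of the paper's one-line remark.
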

\begin{proof}
We restrict ourselves to the case of poly-differential operators. 
The case of poly-vector fields is similar. 
Using the fact that $JL$ is formally smooth and formally of finite type over $R$ we
easily deduce that the canonical map
\[
C\ctimes_{R_1} D_{\poly,R_1}(JL)\r D_{\poly,C}(C\ctimes_{R_1} JL)
\]
is an isomorphism. Combining this with Lemma \ref{ref-3.3.4-29} yields the
required isomorphism. It is easily seen that this isomorphism yields
the asserted compatibility for flat $C$-connections. 
\end{proof}

\subsubsection{Differentials and $L$-jets}\label{sec-formsandjets}

Let us introduce a $JL$-linear map 
\begin{equation}\label{eq-3.14.1}
\Omega^1_{JL/R_1}\r JL\ctimes_{R_2} L_2^\ast:\phi d\theta\mapsto\phi\ctimes\tilde\theta
\end{equation}
with $\tilde\theta(l)\overset{\text{def}}{=}{}^2\nabla_l(\theta)$ for any $l\in L_2$. If we respectively denote 
\eqref{eq-3.14.1} and \eqref{eq-3.3.0} by $u$ and $v$ then by definition we have 
$$
u(\xi)(l)=\xi(v(l))\quad(\forall\xi\in\Omega^1_{JL/R_1})\,.
$$
It then follows from taking the $R$-dual of \eqref{eqlem331} that \eqref{eq-3.14.1} is an isomorphism, and that the 
restriction $L_2^*\r \Omega^1_{JL/R_1}$ to $L_2^\ast$ of its inverse fits into the commutative diagram 
\begin{equation}\label{eq-arraycomp}
\xymatrix{
L_2^*\otimes L_2\ar[rrr]^{\text{contraction}}\ar[d] & & & R_2\ar[d]^{\alpha_2} \\
\Omega^1_{JL/R_1}\hat\otimes{\rm Der}_{R_1}(JL)\ar[rrr]_{\text{contraction}} & & & JL
}\,.
\end{equation} 
The next result follows by applying $\wedge(-)$ to the inverse of
\eqref{eq-3.14.1} and 
is parallel to lemma \ref{ref-3.3.1-24}: 
\begin{lemmas} The maps $R_2\r JL$
  and $L_2^*\r \Omega^1_{JL/R_1}$ induce a DG-algebra morphism $$
  \wedge_{R_2}L_2^*\r \Omega_{JL/R_1} $$
  that extends to an
  isomorphism $JL\ctimes_{R_2}\wedge_{R_2}L_2^*\r \Omega_{JL/R_1}$ of
  $JL$-modules. This isomorphism is compatible with differentials if we 
  put on $JL\ctimes_{R_2}\wedge_{R_2}L_2^*$ the canonical differential
  obtained from the $L_2$ connection on $JL$. 
\end{lemmas}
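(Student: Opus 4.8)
The plan is to obtain the statement by applying the exterior algebra functor to the isomorphism \eqref{eq-3.14.1}. Since $JL$ is formally smooth and formally of finite type over $R_1$, the module of continuous differentials $\Omega^1_{JL/R_1}$ is topologically projective of finite type, so $\Omega_{JL/R_1}=\wedge_{JL}\Omega^1_{JL/R_1}$; combining this with base change of exterior algebras, $\wedge_{JL}(JL\ctimes_{R_2}L_2^*)=JL\ctimes_{R_2}\wedge_{R_2}L_2^*$, gives the asserted $JL$-module isomorphism $JL\ctimes_{R_2}\wedge_{R_2}L_2^*\to\Omega_{JL/R_1}$. Restricting it to $1\ctimes\wedge_{R_2}L_2^*\cong\wedge_{R_2}L_2^*$ one recovers the algebra map induced by $\alpha_2\colon R_2\to JL$ in degree zero and by the restriction $L_2^*\to\Omega^1_{JL/R_1}$ to $L_2^*$ of the inverse of \eqref{eq-3.14.1} in degree one; multiplicativity is automatic, since $\wedge_{R_2}L_2^*$ is the free graded-commutative $R_2$-algebra on $L_2^*$ placed in degree one. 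It is here that the base-change nature of \eqref{eq-3.14.1} (equivalently, of \eqref{eqlem331}) is essential: a priori a morphism of Lie algebroids only produces a de Rham morphism in the opposite direction.

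What remains, and what is the only genuine point, is compatibility with the differentials. Both the relative de Rham differential $d_{\rm DR}$ on $\Omega_{JL/R_1}$ and the Chevalley--Eilenberg differential $d$ on $\wedge_{R_2}L_2^*$ of \eqref{ref-3.3-14} are square-zero derivations, and a derivation, as well as an algebra map intertwining derivations, is determined by its behaviour on the algebra generators; so it suffices to check the identity in degrees zero and one. In degree zero, for $r\in R$ we must match $d_{\rm DR}(\alpha_2(r))$ with the image of $dr\in L_2^*$; pairing against the generating derivations $v(l)={}^2\nabla_l$ of $\Der_{R_1}(JL)$ (which span it over $JL$ by Lemma \ref{ref-3.3.1-24}) we have $\langle d_{\rm DR}\alpha_2(r),{}^2\nabla_l\rangle={}^2\nabla_l(\alpha_2(r))=\alpha_2(l(r))$, and this equals the pairing of ${}^2\nabla_l$ with the image of $dr$ by the defining property of \eqref{eq-3.14.1} — this is exactly the commutative square \eqref{eq-arraycomp}. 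In degree one, take $\tau\in L_2^*$ with image $\omega\in\Omega^1_{JL/R_1}$; using $\omega({}^2\nabla_l)=\alpha_2(\tau(l))$, Cartan's formula for $d_{\rm DR}\omega$ evaluated on $({}^2\nabla_{l_1},{}^2\nabla_{l_2})$, the identity ${}^2\nabla_l(\alpha_2(s))=\alpha_2(l(s))$, and the flatness $[{}^2\nabla_{l_1},{}^2\nabla_{l_2}]={}^2\nabla_{[l_1,l_2]}$ from \eqref{eq-3.3.0}, one finds $(d_{\rm DR}\omega)({}^2\nabla_{l_1},{}^2\nabla_{l_2})=\alpha_2\bigl(l_1(\tau(l_2))-l_2(\tau(l_1))-\tau([l_1,l_2])\bigr)$, which is precisely $\alpha_2\bigl((d\tau)(l_1,l_2)\bigr)$ by \eqref{ref-3.3-14}, i.e.\ the value of the image of $d\tau$ on those derivations. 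Hence the two differentials agree on generators, and therefore everywhere.

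For the last sentence I would redo this bookkeeping keeping track of the $JL$-coefficients: the canonical differential on $JL\ctimes_{R_2}\wedge_{R_2}L_2^*$ attached to the connection ${}^2\nabla$ (formula \eqref{ref-3.5-16}) restricts to $d$ on $1\ctimes\wedge_{R_2}L_2^*$ and sends $\phi\ctimes 1$ to $\sum_i{}^2\nabla_{l_i}(\phi)\ctimes l_i^*$; under the isomorphism this goes to the $1$-form whose contraction with ${}^2\nabla_{l_j}$ is ${}^2\nabla_{l_j}(\phi)=\langle d_{\rm DR}\phi,{}^2\nabla_{l_j}\rangle$, hence equal to $d_{\rm DR}\phi$. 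Since both differentials then agree on $JL$ and on the image of $L_2^*$, and both are derivations, they agree on all of $\Omega_{JL/R_1}$. I expect the main obstacle to be exactly the degree-one structure-equation computation sketched above, which amounts to the assertion that \eqref{eq-3.14.1} carries the Chevalley--Eilenberg data of $L_2$ to the de Rham data of $JL$; on the topological side one should also make sure that $\wedge_{JL}\Omega^1_{JL/R_1}$ genuinely computes $\Omega_{JL/R_1}$ and commutes with the completed base change, which again rests on the formal smoothness of $JL$ over $R_1$ invoked at the outset.
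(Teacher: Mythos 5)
Your proof is correct and follows essentially the same route as the paper: the paper views \eqref{eq-3.14.1} as an isomorphism of Lie algebroids over $JL$ between $JL\otimes_{R_2}L_2$ (with structure deduced from the $L_2$-connection) and $\Der_{R_1}(JL)$, and deduces the DG-algebra isomorphism of the associated De Rham complexes. Your generator-by-generator check of the differentials via the Cartan formula and the Lie-morphism property of \eqref{eq-3.3.0} is exactly the verification the paper leaves implicit in ``the statement of the lemma follows easily from this.''
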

\begin{proof} We may view \eqref{eq-3.14.1} as an isomorphism between
  the Lie algebroids over $JL$ given by $JL\otimes_{R_2} L_2$ and
$\Der_{R_1}(JL)$ where the first Lie algebroid structure is deduced from
the $L_2$-connection on $JL$.

As a result we obtain an isomorphism between the corresponding DG-algebras:
\[
JL\ctimes_{R_2}\wedge_{R_2}L_2^*=\bigwedge_{JL}(JL\ctimes_{R_2}L^\ast)\cong
\bigwedge_{JL}\Der_{R_1}(JL)^\ast=
\Omega_{JL/R_1}
\]
The statement of the lemma follows easily from this. 
\end{proof}
We also have the following analogue of Lemma \ref{ref-3.3.4-29}: 
\begin{lemmas}
\label{lemma3.3.6}
Let $C$ be a commutative $R_1$-DG-algebra. The canonical map
\begin{equation}\label{eq-C-forms}
(C\ctimes_{R_1} JL) \ctimes_{R_2} (\wedge_{R_2}L_2^*) \r \Omega_{C\ctimes_{R_1} JL/C}
\end{equation}
is an isomorphism. If $JL$ carries a flat $C$-connection $\nabla$
which is compatible with the $R_2$-action on $C\ctimes_{R_1} JL$
then $\nabla\otimes \id$ on the left corresponds to the differential $d^\nabla$ on the right. 
\end{lemmas}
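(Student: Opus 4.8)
The plan is to reduce this to Lemma~\ref{ref-3.3.4-29} by reinterpreting the right-hand side of \eqref{eq-C-forms} as an object of relative poly-vector-field type. The key observation is that for a formally smooth morphism, the module of relative differentials is finitely generated projective, so $\Omega_{C\ctimes_{R_1} JL/C}$ is the $(C\ctimes_{R_1}JL)$-linear dual of $\Der_C(C\ctimes_{R_1}JL) = T^0_{\poly,C}(C\ctimes_{R_1}JL)$, and hence its exterior algebra $\Omega_{C\ctimes_{R_1}JL/C}$ is the exterior algebra on that dual. First I would record that $C\ctimes_{R_1}JL$ is formally smooth and formally of finite type over $C$ (this follows from the corresponding property of $JL$ over $R_1$, already used in the proof of Lemma~\ref{ref-3.3.4-29}), so that $\Omega^1_{C\ctimes_{R_1}JL/C}$ is a finitely generated projective $(C\ctimes_{R_1}JL)$-module and duality between $\Omega^1$ and $\Der_C$ is perfect.

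Next I would apply the poly-vector-field half of Lemma~\ref{ref-3.3.4-29}: it gives a canonical isomorphism
\[
(C\ctimes_{R_1}JL)\ctimes_{R_2} T_{\poly}^{L_2}(R_2)\;\xrightarrow{\ \sim\ }\;T_{\poly,C}(C\ctimes_{R_1}JL)\,,
\]
and in degree $0$ this restricts to an isomorphism $(C\ctimes_{R_1}JL)\ctimes_{R_2}L_2\to\Der_C(C\ctimes_{R_1}JL)$. Dualizing over $C\ctimes_{R_1}JL$ and using that $L_2$ is free of finite rank over $R_2$ (so the dual of the base-changed module is the base change of the dual $L_2^\ast$), I get $(C\ctimes_{R_1}JL)\ctimes_{R_2}L_2^\ast\cong\Omega^1_{C\ctimes_{R_1}JL/C}$. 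Taking exterior powers over $C\ctimes_{R_1}JL$ and using that exterior algebra commutes with the (completed) base change, this yields precisely the map \eqref{eq-C-forms} and shows it is an isomorphism. Here one should note that the map in the statement is obtained by linearly extending the canonical $\wedge_{R_2}L_2^\ast\to\Omega_{\bullet/C}$, and one checks it agrees with the duality-plus-exterior-power description exactly as in the proof of the previous (non-relative) lemma via \eqref{eq-arraycomp}.

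For the compatibility of differentials, I would argue as in Lemma~\ref{ref-3.3.4-29}: a flat $C$-connection $\nabla$ on $JL$ that is compatible with the $R_2$-action endows $C\ctimes_{R_1}JL$ with the structure of a DG-algebra over $C$, and the de~Rham differential $d^\nabla$ on $\Omega_{C\ctimes_{R_1}JL/C}$ is the one induced by this DG-structure. Under the isomorphism just constructed, the left-hand factor $C\ctimes_{R_1}JL$ carries $\nabla$ and the $\wedge_{R_2}L_2^\ast$ factor is static, so $\nabla\otimes\id$ corresponds to $d^\nabla$; this is a formal consequence of naturality of de~Rham differentials under the algebra map and of the fact that $d^\nabla$ is determined by its restriction to degree $0$ together with the Leibniz rule, both of which match on generators.

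The main obstacle I anticipate is purely bookkeeping rather than conceptual: making sure the completed tensor products $\ctimes$ behave well, i.e.\ that duality, base change, and exterior powers all commute with the relevant completions (since $JL$ is a complete topological module and $C$ may itself be filtered/complete). The paper's blanket conventions on completions in the graded/filtered setting, together with the formal smoothness and finite-type hypotheses, make all of this routine, but one has to invoke them carefully so that the identification $(C\ctimes_{R_1}JL)\ctimes_{R_2}(\wedge_{R_2}L_2^\ast)\cong\bigwedge_{C\ctimes_{R_1}JL}\bigl((C\ctimes_{R_1}JL)\ctimes_{R_2}L_2^\ast\bigr)$ is legitimate. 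Once that is granted, the proof is essentially a one-line deduction from Lemma~\ref{ref-3.3.4-29} by taking duals and exterior powers.
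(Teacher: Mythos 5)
Your argument is correct and follows exactly the route the paper intends: the paper in fact states this lemma as an ``analogue'' of Lemma \ref{ref-3.3.4-29} and the preceding lemma on $\Omega_{JL/R_1}$ and omits the proof entirely, the implicit argument being precisely the combination of formal smoothness (to base-change along $R_1\to C$), the duality between $JL\ctimes_{R_2}L_2$ and $\Der_{R_1}(JL)$ from Lemma \ref{ref-3.3.1-24}/\eqref{eq-3.14.1}, and exterior powers. Your version merely performs the dualization after the base change rather than before, which changes nothing of substance.
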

Here $d^\nabla$ is characterized by the properties that it coincides with $\nabla$ on 
$C\ctimes_{R_1}JL=\Omega^0_{C\ctimes_{R_1}JL/C}$ and that it commutes with the De Rham 
differential. 

\section{Coordinate spaces}\label{ref-4-31}

We keep the notations of the previous section. 

\subsection{The coordinate space of a Lie algebroid} The following
definition is inspired by~\cite{ye3}. 
\label{seccoord}
We define $R^{\coord,L}$ as the commutative $R_1$-algebra which trivializes $JL$, 
i.e.\ which is universal for the property that there is an isomorphism of 
$R^{\coord,L}$-algebras
\begin{equation}\label{ref-4.1-32}
t:R^{\coord,L}\ctimes_{R_1} JL\cong R^{\coord,L}[[t_1,\ldots,t_d]]
\end{equation}
such that $R^{\coord,L}\ctimes_{R_1} J^cL$ is mapped to $(t_1,\ldots,t_d)$. 

For use below we give an explicit description of $R^{\coord,L}$.
Since we have assumed $L$ to be free of rank $d$ we may assume that
\[
JL\cong R_1[[x_1,\ldots,x_d]]
\]
where $(x_i)_i$ is mapped to a basis of $J^cL/(J^cL)^2\cong L^\ast$.
Let $T$ be the polynomial ring over $R_1$ in the variables
$y_{i,a_1\cdots a_d}$ where $i=1,\ldots,d$, $a_j\in \NN$ and
$(a_1,\ldots,a_d)\neq (0,\ldots,0)$.  Then $R^{\coord,L}$ is the
localization of $T$ at $\det(y_{i,e_j})$ where
$e_j=(0,\ldots,1,\ldots,0)$ with the $1$ occurring in the $j$'th place
and $t$ is given by
\begin{equation}
\label{ref-4.2-33}
t(x_i)=\sum_{i,a_1\cdots a_d} y_{i,a_1\cdots a_d}t_1^{a_1}\cdots t_d^{a_d}
\end{equation}
Since $R^{\coord,L}$ is universal any $k$-linear automorphism $\alpha$ of 
$k[[t_1,\ldots,t_d]]$ yields a corresponding \emph{unique} 
$R_1$-linear automorphism $\bar{\alpha}$ of $R^{\coord,L}$ such that the 
combined automorphism $(\bar{\alpha},\alpha)$ of 
$R^{\coord,L}[[t_1,\ldots,t_d]]$ leaves the image under $t$ of $JL$ pointwise 
invariant.  Since $\Gl_d(k)$ acts on $k[[t_1,\ldots,t_d]]$ we obtain 
a corresponding $R_1$-linear action of $\Gl_d(k)$ on $R^{\coord,L}$.

In fact if we  write 
\begin{equation}\label{ref-4.3-34}
R^{\coord,L}\cong R_1\otimes S^{\coord}
\end{equation}
with $S^{\coord}=k[(y_{i,a_1,\ldots,a_d})]_{\det(y_{i,e_j})}$
then the $\Gl_d(k)$ action on $R^{\coord,L}$ is obtained from
a $\Gl_d(k)$-action on $S^{\coord}$ which  preserves
the finite dimensional vector spaces with basis $\{y_{i,a_1,\ldots,a_d}
\mid \sum_i a_i=s\}$. Needless to say that the decomposition \eqref{ref-4.3-34} 
depends on the choice of the generators $(x_i)_i$ of $J^cL$. 

It follows in particular that the $\Gl_d(k)$-action is rational. Hence
we may consider the derived action of $\mathfrak{gl}_d(k)$ on
$R^{\coord,L}$.
The action of $\Gl_d(k)$ on $k[[t_1,\ldots,t_d]]$ also yields a derived action
of $\mathfrak{gl}_d(k)$. The two actions
of $\mathfrak{gl}_d(k)$ satisfy the following compatibility.
\begin{lemmas}\label{ref-4.1.1-35}
For $v\in \frak{gl}_d(k)$ let $L_v$ be the action $v$ on $R^{\coord,L}[[t_1,\ldots,t_d]]$ 
obtained by linearly extending the action of $v$ on $k[[t_1,\ldots,t_d]]$.
 
Let $L_{\bar{v}}$ be the action of $v$ on $R^{\coord,L}[[t_1,\ldots,t_d]]$ 
obtained by linearly extending the action of $v$ on $R^{\coord,L}$.  
Then $L_{\bar{v}}+L_v$ is zero on $t(JL)$. 
\end{lemmas}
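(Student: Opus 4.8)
The plan is to obtain the infinitesimal identity by differentiating, over the dual numbers, the defining \emph{group}-level property of the $\Gl_d(k)$-action on $R^{\coord,L}$. First I would reduce to a statement about topological generators of $t(JL)$. Note that $L_v$ is a continuous derivation of $R^{\coord,L}[[t_1,\ldots,t_d]]$ which vanishes on $R^{\coord,L}$ (it acts only on the $t_i$), and that $L_{\bar v}$ is a continuous derivation which vanishes on $R_1=\alpha_1(R)$ (the derived action of $v$ on $R^{\coord,L}$ being $R_1$-linear). Since $t(JL)$ is, as an algebra, topologically generated by $R_1$ together with the finitely many elements $t(x_i)=\sum_{a}y_{i,a}t_1^{a_1}\cdots t_d^{a_d}$ coming from \eqref{ref-4.2-33}, it therefore suffices to prove that $(L_{\bar v}+L_v)\bigl(t(x_i)\bigr)=0$ for each $i$.

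Next I would recall the construction of $\bar g$. For $g\in\Gl_d(k)$ the automorphism $\bar g$ of $R^{\coord,L}$ is the unique one for which the composite $L_{\bar g}\circ L_g$ --- a composite of commuting automorphisms of $R^{\coord,L}[[t_1,\ldots,t_d]]$, since $L_{\bar g}$ acts only on $R^{\coord,L}$ and $L_g$ only on the $t_i$ --- fixes $t(JL)$ pointwise; by uniqueness $g\mapsto\bar g$ is also a group homomorphism. Because the $\Gl_d(k)$-actions on $R^{\coord,L}$ and on $k[[t_1,\ldots,t_d]]$ are rational, on each homogeneous component the relation ``$L_{\bar g}\circ L_g$ fixes $t(x_i)$'' becomes a polynomial identity in $g$; as $k$ is infinite and $\Gl_d$ is a rational variety, $\Gl_d(k)$ is Zariski dense in $\Gl_d$, so this identity holds scheme-theoretically, and in particular at $g=1+\varepsilon v\in\Gl_d\bigl(k[\varepsilon]/(\varepsilon^2)\bigr)$ for any $v\in\frak{gl}_d(k)$. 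For this $g$ one has $L_g=\id+\varepsilon L_v$ and $L_{\bar g}=\id+\varepsilon L_{\bar v}$, by the very definition of the derived actions $L_v$ and $L_{\bar v}$.

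It then only remains to expand $\bigl(L_{\bar g}\circ L_g\bigr)\bigl(t(x_i)\bigr)=t(x_i)$ at $g=1+\varepsilon v$: the left-hand side equals $\bigl(\id+\varepsilon(L_{\bar v}+L_v)\bigr)\bigl(t(x_i)\bigr)$ modulo $\varepsilon^2$, so comparing the coefficients of $\varepsilon$ gives $(L_{\bar v}+L_v)\bigl(t(x_i)\bigr)=0$, which by the first paragraph is precisely the assertion of the lemma. I expect the one point that really needs attention --- the ``main obstacle'', modest as it is --- to be the legitimacy of substituting the dual number $1+\varepsilon v$ in this pro-finite setting; this is handled exactly by the reduction to the finitely many generators $t(x_i)$ together with the rationality of the $\Gl_d$-action on each finite-dimensional homogeneous component, so that everything relevant is genuinely algebraic in $g$. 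Beyond this there is no real difficulty.
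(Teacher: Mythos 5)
Your proof is correct and follows exactly the route the paper intends: the paper's own proof is the single sentence that the lemma is ``the derived version of the fact that $\Gl_d(k)$ leaves $t(JL)$ pointwise fixed,'' and your argument (reduction to the topological generators $t(x_i)$, rationality of the actions on each homogeneous component, and differentiation of the group-level identity at $1+\varepsilon v$ over the dual numbers) is precisely the careful spelling-out of that one-liner. Nothing to change.
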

\begin{proof} This is the derived version of the fact that 
$\Gl_d(k)$ leaves $t(JL)$ pointwise fixed. 
\end{proof}

\subsection{On some DG-algebras associated to coordinate spaces}

\subsubsection{The DG-algebra $C^{\coord,L}$}\label{ref-4.2.1-36}

Put $C^{\coord,L}=\Omega_{R^{\coord,L}} \otimes_{\Omega_{R_1}} \wedge_{R_1}
L_1^\ast$. Using the DG-algebra structures on $\wedge_{R_1} L_1^\ast$ we see that 
$C^{\coord,L}$ is naturally a commutative $\wedge_{R_1} L_1^\ast$-DG-algebra. 

As we have not put any restrictions on $R$, the DG-algebras
$\Omega_{R_1}$ and $\Omega_{R^{\coord,L}}$ may be enormous objects.
However only their ``difference'' matters and this is controlled by
\eqref{ref-4.3-34}.  In fact from \eqref{ref-4.3-34} we obtain a
coordinate dependent isomorphism of DG-algebras
\[
C^{\coord,L}=\Omega_{S^{\coord}}\otimes\wedge_{R_1}L_1^\ast
\]

We will now form the completed tensor product  over
$R^{\coord,L}$ of the domain and codomain of the map \eqref{ref-4.1-32} with 
$C^{\coord,L}$ ignoring the differentials. For the domain we get
\begin{equation}\label{ref-4.4-37}
C^{\coord,L} \ctimes_{R^{\coord,L}}R^{\coord,L}\ctimes_{R_1} JL
=C^{\coord,L}\ctimes_{R_1} JL
=\Omega_{R^{\coord,L}}\ctimes_{\Omega_{R_1}} (\wedge_{R_1} L_1^\ast
\otimes_{R_1} JL)
\end{equation}
For the codomain we get
\[
C^{\coord,L}\ctimes_{R^{\coord,L}}R^{\coord,L}[[t_1,\ldots,t_d]]=
C^{\coord,L}[[t_1,\ldots,t_d]]
\]
So we obtain an isomorphism of graded algebras
\[
\tilde{t}:C^{\coord,L}\ctimes_{R_1}JL\longrightarrow C^{\coord,L}[[t_1,\ldots,t_d]]
\]
Both domain and codomain of $\tilde{t}$ carry a natural differential.
The differential on $C^{\coord,L}\ctimes_{R_1}JL$ is obtained by
combining the ordinary differential on $\Omega_{R^{\coord,L}}$ and the
differential ${}^1\nabla$ on $\wedge_{R_1}L^\ast_1\otimes_{R} JL$ using the right hand side of \eqref{ref-4.4-37}. 
The differential on $C^{\coord,L}[[t_1,\ldots,t_d]]$ is obtained from
extending the differential on $C^{\coord,L}$. Let us denote the
resulting differentials by ${}^1\nabla^{\coord}$ and $d$ respectively. 

The constructed differentials transform the obvious morphisms of graded algebras
\begin{align*}
  C^{\coord,L}&\r \Omega_{R^{\coord,L}}\ctimes_{\Omega_R} (\wedge_{R_1} L_1^\ast\otimes_{R_1} JL)\\
  C^{\coord,L} &\r C^{\coord,L}[[t_1,\ldots,t_d]]
\end{align*}
into morphisms of DG-algebras. 

 By
the middle equality in \eqref{ref-4.4-37} ${}^1\nabla^{\coord}$ may be
viewed as a flat $C^{\coord,L}$-connection on $JL$.
The map
\begin{equation}
\label{eqmc}
(\tilde{t}\circ {}^1\nabla^{\coord} \circ \tilde{t}^{-1}-d):
C^{\coord,L}[[t_1,\ldots,t_d]]\r C^{\coord,L}[[t_1,\ldots,t_d]]
\end{equation}
is now a $C^{\coord,L}$-linear derivation. 

From \cite[\S 6.4]{VdB35} we obtain the existence of  elements  $\omega^i\in C^{\coord,L}[[t_1,\ldots,t_d]]_1$ such that for 
\begin{equation}
\label{ref-4.5-38}
\omega=\sum_i \omega^i \frac{\partial\ }{\partial t_i}\in 
C^{\coord,L}\ctimes \Der_k(k[[t_1,\ldots,t_d]])
\end{equation}
we have $\tilde{t}\circ {}^1\nabla^{\coord} \circ \tilde{t}^{-1}=d+\omega$ 
and furthermore $\omega$ satisfies the Maurer-Cartan equation 
\[
d\omega+\frac{1}{2}[\omega,\omega]=0
\]
in $C^{\coord,L}\ctimes \Der_k(k[[t_1,\ldots,t_d]])$.

\subsubsection{The $\mathfrak{gl}_d(k)$-action on $C^{\coord,L}$ and the Maurer-Cartan form}
We need the following result. 
\begin{lemmas}\label{ref-4.2.1-39}
For $v\in \frak{gl}_d(k)$ let $i_{\bar{v}}$ be the derivation
on $C^{\coord,L}=\Omega_{R^{\coord,L}}\otimes_{\Omega_{R_1}} \wedge_{R_1} L_1^\ast$ 
obtained by linearly extending the contraction
of $\Omega_{R^{\coord,L}}$ with  the
action as  $R_1$-derivation of $v$ on $R^{{\coord,L}}$ (cfr \S\ref{seccoord}). 
Extend $i_{\bar{v}}$ to a map of degree $-1$ 
from $C^{\coord,L}\ctimes \Der_k(k[[t_1,\ldots,t_d]])$ to itself. Then we have
\begin{equation}\label{ref-4.6-40}
i_{\bar{v}}\omega=1\otimes v
\end{equation}
where both sides are considered as elements of $R^{\coord,L}\ctimes \Der_k(k[[t_1,\ldots,t_d]])$.
\end{lemmas}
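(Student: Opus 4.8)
The plan is to derive the identity by applying the contraction $i_{\bar{v}}$ to the relation $\tilde{t}\circ {}^1\nabla^{\coord} \circ \tilde{t}^{-1}=d+\omega$ established in \S\ref{ref-4.2.1-36}, and then reading off the answer by Cartan calculus together with Lemma \ref{ref-4.1.1-35}. First I would extend $i_{\bar{v}}$ to a degree $-1$ derivation of $C^{\coord,L}[[t_1,\ldots,t_d]]$ acting on the $C^{\coord,L}$-coefficients (so that $i_{\bar{v}}(t_j)=0$) and view $\omega=\sum_i\omega^i\,\partial/\partial t_i$ as the associated $C^{\coord,L}$-linear derivation of degree $+1$. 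Since $i_{\bar{v}}$ commutes with each $\partial/\partial t_i$, the graded Leibniz rule shows that the graded commutator $[i_{\bar{v}},\omega]$ is the $C^{\coord,L}$-linear derivation $\sum_i (i_{\bar{v}}\omega^i)\,\partial/\partial t_i$, i.e.\ precisely the element $i_{\bar{v}}\omega\in R^{\coord,L}\ctimes\Der_k(k[[t_1,\ldots,t_d]])$ that we must identify with $1\otimes v$, equivalently with the derivation $L_v$ of Lemma \ref{ref-4.1.1-35}.

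Taking the graded commutator of $i_{\bar{v}}$ with both sides of $\tilde{t}\circ {}^1\nabla^{\coord}\circ\tilde{t}^{-1}=d+\omega$ gives
\[
[i_{\bar{v}},\,\tilde{t}\circ {}^1\nabla^{\coord}\circ\tilde{t}^{-1}]=[i_{\bar{v}},d]+i_{\bar{v}}\omega .
\]
Because $\bar{v}$ is an $R_1$-linear derivation of $R^{\coord,L}$, the contraction $i_{\bar{v}}$ graded-commutes with the part of the differential of $C^{\coord,L}=\Omega_{R^{\coord,L}}\otimes_{\Omega_{R_1}}\wedge_{R_1}L_1^\ast$ coming from $\wedge_{R_1}L_1^\ast$, so Cartan's magic formula applied to the de Rham part gives $[i_{\bar{v}},d]=L_{\bar{v}}$, the Lie derivative along $\bar{v}$ (extended with $L_{\bar{v}}(t_j)=0$, in the notation of Lemma \ref{ref-4.1.1-35}). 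For the left-hand side I would check that $\tilde{t}^{-1}\circ i_{\bar{v}}\circ\tilde{t}$ equals the contraction $i_{\bar{v}}^{\,\circ}$ on $C^{\coord,L}\ctimes_{R_1}JL$ that contracts only the $\Omega_{R^{\coord,L}}$-factor: both are degree $-1$ derivations and they agree on the algebra generators $C^{\coord,L}$ (on which $\tilde{t}$ restricts to the identity) and $t_j$ (whose $\tilde{t}$-preimages have degree $0$ and so are annihilated by both derivations). Applying Cartan's formula once more on $C^{\coord,L}\ctimes_{R_1}JL$ — again $i_{\bar{v}}^{\,\circ}$ graded-commutes with the $\wedge_{R_1}L_1^\ast\otimes_{R_1}JL$-part of ${}^1\nabla^{\coord}$ — identifies the left-hand side with $\tilde{t}\circ\mathcal{L}_{\bar{v}}^{\,\circ}\circ\tilde{t}^{-1}$, where $\mathcal{L}_{\bar{v}}^{\,\circ}$ is the Lie derivative along $\bar{v}$ acting in the $\Omega_{R^{\coord,L}}$-direction of $C^{\coord,L}\ctimes_{R_1}JL$. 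Combining, $i_{\bar{v}}\omega=\tilde{t}\circ\mathcal{L}_{\bar{v}}^{\,\circ}\circ\tilde{t}^{-1}-L_{\bar{v}}$.

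It remains to identify $\tilde{t}\circ\mathcal{L}_{\bar{v}}^{\,\circ}\circ\tilde{t}^{-1}$, and this is the one genuinely non-formal step, using Lemma \ref{ref-4.1.1-35}. Both $\tilde{t}\circ\mathcal{L}_{\bar{v}}^{\,\circ}\circ\tilde{t}^{-1}$ and $L_{\bar{v}}+L_v$ are continuous degree-$0$ derivations of $C^{\coord,L}[[t_1,\ldots,t_d]]$, so it is enough to compare them on a topological generating set, and since $t$ identifies $R^{\coord,L}\ctimes_{R_1}JL$ with $R^{\coord,L}[[t_1,\ldots,t_d]]$ one may take $C^{\coord,L}$ together with the elements $t(x_i)$. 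On $C^{\coord,L}$ both operators restrict to the Lie derivative along $\bar{v}$ (since $L_v$ kills $C^{\coord,L}$). On $t(x_i)$ the left-hand operator gives $\tilde{t}\big(\mathcal{L}_{\bar{v}}^{\,\circ}(x_i)\big)=0$ since $x_i$ lies in the subalgebra $JL=R_1[[x_1,\ldots,x_d]]$, which $\mathcal{L}_{\bar{v}}^{\,\circ}$ annihilates, while the right-hand operator gives $(L_{\bar{v}}+L_v)(t(x_i))=0$ by Lemma \ref{ref-4.1.1-35}. Hence $\tilde{t}\circ\mathcal{L}_{\bar{v}}^{\,\circ}\circ\tilde{t}^{-1}=L_{\bar{v}}+L_v$, so $i_{\bar{v}}\omega=L_v$, which is the element $1\otimes v$ of $R^{\coord,L}\ctimes\Der_k(k[[t_1,\ldots,t_d]])$. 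The real difficulty lies not in any single estimate but in the bookkeeping: keeping track of the three ambient graded algebras and of the various extensions and conjugates of $i_{\bar{v}}$, $\mathcal{L}_{\bar{v}}^{\,\circ}$, $L_{\bar{v}}$, $L_v$, and verifying that $i_{\bar{v}}$ really does graded-commute (up to Koszul signs) with the non-de-Rham pieces of the two differentials so that Cartan's formula can be applied.
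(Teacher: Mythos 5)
Your argument is correct and follows essentially the same route as the paper's proof: both rest on the relation $\tilde{t}\circ{}^1\nabla^{\coord}\circ\tilde{t}^{-1}=d+\omega$, the Cartan formula for $i_{\bar v}$, the fact that the contraction kills the image of $\wedge_{R_1}L_1^\ast\otimes_{R_1}JL$ (equivalently, of $JL$), and Lemma \ref{ref-4.1.1-35}. The only difference is organizational: the paper evaluates the resulting operator identity directly on $t(JL)$ using $R^{\coord,L}$-linearity, whereas you conjugate the Lie derivative through $\tilde{t}$ and compare derivations on a topological generating set.
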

\begin{proof}
We may prove \eqref{ref-4.6-40} by evaluation on an arbitrary element $g\in R^{\coord,L}[[t_1,\ldots,t_d]]$. 
Since $\omega=\sum_i \omega_i (\partial/\partial t_i)$ we have 
$i_{\bar{v}}(\omega)=\sum_i i_{\bar{v}}(\omega_i)(\partial/\partial t_i)$ and hence 
$(i_{\bar{v}}\omega)(g)=i_{\bar{v}}(\omega(g))$. Thus we need to show
\begin{equation}\label{ref-4.7-41}
i_{\bar{v}}\circ \omega=L_v
\end{equation}
as operators on $R^{\coord,L}[[t_1,\ldots,t_d]]$ (as in Lemma \ref{ref-4.1.1-35}
$L_v$ is the extension of the $v$-action on $k[[t_1,\ldots,t_d]]$). 

It is clear that $R^{\coord,L}[[t_1,\ldots,t_d]]$ is topologically generated by $R^{\coord,L}$ and $t(JL)$. 
Since the operators occuring are $R^{\coord,L}$-linear it is sufficient to prove the identity
\begin{equation}\label{ref-4.8-42}
i_{\bar{v}}\circ \omega\circ t=L_v\circ t
\end{equation}
as operators on $JL$. We may rewrite the l.h.s. of \eqref{ref-4.8-42} as follows
\begin{equation}
  i_{\bar{v}}\circ \omega\circ t=i_{\bar{v}}\circ (d+\omega)\circ
  t-i_{\bar{v}}\circ d\circ t =i_{\bar{v}}\circ \tilde{t} \circ
  {}^1\nabla^{\coord}-L_{\bar{v}}\circ t
  =L_{v}\circ t\,.
\end{equation}
In the second equality we have used the Cartan relation
$L_{\bar{v}}=d\circ i_{\bar{v}}+i_{\bar{v}}\circ d$ and the fact that
the term $d\circ i_{\bar{v}}$ acts as zero on
$R^{\coord,L}[[t_1,\ldots,t_d]]$ (for degree reasons). We have also
used \eqref{eqmc}. 

In the third equality we have used the fact that $JL$ is mapped to $\wedge_{R_1} L_1^\ast\otimes_{R_1}
JL$ under ${}^1\,\nabla^{\coord}$ 
and the image of $\wedge_{R_1} L_1^\ast\otimes_{R_1}
JL$ in $C^{\coord,L}\ctimes \Der_k(k[[t_1,\ldots,t_d]])$ under $t$ lies
in the part generated by $R^{\coord,L}$ and $\Der_k(k[[t_1,\ldots,t_d]])$
and on this part $i_{\bar{v}}$ is zero. 
\end{proof}

\subsection{The affine coordinate space of a Lie algebroid}
\label{ref-4.3-43}

We put $R^{\aff,L}=(R^{\coord,L})^{\Gl_d(k)}$. We easily verify from
\eqref{ref-4.3-34} that $R^{\aff,L}$ is of the form $R_1\otimes
S^{\aff}$ with $S^{\aff}= (S^{\coord})^{\Gl_d(k)}$  and furthermore
$S=k[(z_i)_i]$ for a set of (infinitely many) variables
$(z_i)_i$.

Below we will also use  the DG-algebra
$
C^{\aff,L}=\Omega_{R^{\aff,L}}\ctimes_{\Omega_{R_1}}\wedge_{R_1}L_1^\ast
$.
Exactly as for $C^{\coord,L}$ one produces a flat $C^{\aff,R}$-connection on $JL$ 
which we denote by ${}^1\nabla^{\aff}$. Furthermore we have a coordinate
dependent isomorphism of DG-algebras
\begin{equation}\label{ref-4.9-44}
C^{\aff,L}=\Omega_{S^{\aff}}\otimes \wedge_{R_1} L_1^\ast\,.
\end{equation}

\begin{lemmas}\label{ref-4.3.1-45}
For any free $R_2$-module $M$ we have that 
\[
M\r (C^{\aff,L}\ctimes_{R_1} JL,{}^1\nabla^{\aff})\ctimes_{R_2} M
\]
is a quasi-isomorphism. 
\end{lemmas}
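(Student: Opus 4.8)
The plan is to reduce the statement to a computation on associated graded objects, exactly in the spirit of Proposition~\ref{ref-3.2.3-21}. First I would unwind what the map is: by construction ${}^1\nabla^{\aff}$ is a flat $C^{\aff,L}$-connection on $JL$, so $(C^{\aff,L}\ctimes_{R_1} JL,{}^1\nabla^{\aff})$ is a commutative $C^{\aff,L}$-DG-algebra; tensoring over $R_2$ with the free $R_2$-module $M$ gives a DG-module, and the map in the statement sends $m\mapsto 1\otimes 1\otimes m$ (where the first $1$ is in $C^{\aff,L}$, the second in $JL$ via $\alpha_2$). Since $M$ is free over $R_2$, everything in sight commutes with the direct sum $M=\bigoplus_i R_2$, so it suffices to treat the case $M=R_2$; that is, to show
\[
R_2\r (C^{\aff,L}\ctimes_{R_1} JL,{}^1\nabla^{\aff})
\]
is a quasi-isomorphism. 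Here the right-hand side, as a complex, is $C^{\aff,L}\ctimes_{R_1}(\wedge_{R_1}L_1^\ast\otimes_{R_1}JL)$ with the combined De~Rham differential on $\Omega_{R^{\aff,L}}$ and ${}^1\nabla$; using the coordinate-dependent identification \eqref{ref-4.9-44}, $C^{\aff,L}=\Omega_{S^{\aff}}\otimes\wedge_{R_1}L_1^\ast$.

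Second, I would filter. There are two natural filtrations available: the $J^cL$-adic filtration on $JL$ (as used in Proposition~\ref{ref-3.2.3-21}) and the stupid/De~Rham filtration on $\Omega_{S^{\aff}}$. The cleanest route is probably to filter by the $J^cL$-adic filtration first. By \eqref{ref-3.8-20} we have $\gr JL=S_RL^\ast$ with the two $R$-actions agreeing, and the induced differential ${}^1\nabla$ on the associated graded is the Koszul-type differential $-\sum_j l_j^\ast\otimes i_{l_j}$ appearing in \eqref{ref-3.9-22}. On the associated graded we therefore get the complex
\[
\Omega_{S^{\aff}}\otimes\big(0\r R\r S_RL^\ast\r L^\ast\otimes_R S_RL^\ast\r\cdots\r\wedge_R^dL^\ast\otimes_RS_RL^\ast\r 0\big),
\]
where the $\Omega_{S^{\aff}}$ factor carries its own De~Rham differential (coming from the $\Omega_{R^{\aff,L}}$-part of ${}^1\nabla^{\aff}$) but does not interact with the Koszul differential at the level of associated graded — the cross terms raise $J^cL$-adic degree. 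Since the Koszul complex \eqref{ref-3.9-22} is a resolution of $R$ (Proposition~\ref{ref-3.2.3-21}), taking cohomology in the Koszul direction collapses the bracketed complex to $R$ in degree $0$, and we are left with $(\Omega_{S^{\aff}}\otimes R,d_{\rm DR})$, i.e. $\Omega_{R^{\aff,L}}$ with its De~Rham differential but with the $\wedge_{R_1}L_1^\ast$-part killed. At this point I must remember that $R^{\aff,L}=R_1\otimes S^{\aff}$ with $S^{\aff}=k[(z_i)_i]$ a polynomial ring, so $\Omega_{S^{\aff}}$ is the (completed, graded) De~Rham complex of an affine space and hence has cohomology $k$ in degree $0$ by the algebraic Poincaré lemma; therefore $\Omega_{R^{\aff,L}}$ has cohomology $R_1$, and tracking through the identifications this $R_1$ is identified with $R_2$ via $\alpha_2$ as required.

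Third, I would assemble the spectral sequence argument: the $J^cL$-adic filtration is complete, exhaustive and separated (in the appropriate completed/graded sense of the paper's conventions), so the associated spectral sequence converges, and since $E_1$ is concentrated so as to give exactly $R_2$, the map $R_2\r(C^{\aff,L}\ctimes_{R_1}JL,{}^1\nabla^{\aff})$ is a quasi-isomorphism. The one genuine subtlety — the main obstacle — is the order of taking cohomology and the convergence/exactness bookkeeping for the double complex with one direction being the possibly enormous De~Rham complex $\Omega_{R^{\aff,L}}$ and completions being involved: one has to be careful that $\Omega_{S^{\aff}}\otimes(-)$ is exact on the Koszul complex (it is, since $\Omega_{S^{\aff}}$ is flat, being a polynomial-ring De~Rham complex, a sum of free $S^{\aff}$-modules) and that the algebraic Poincaré lemma applies in the completed graded setting that the paper uses for $S^{\aff}$ (it does, since the grading by the $z_i$-degree is respected and finite in each piece). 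Everything else is routine naturality.
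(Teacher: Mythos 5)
Your proposal is correct and follows essentially the same route as the paper's proof: filter by the $J^cL$-adic filtration, identify the associated graded with $\Omega_{S^{\aff}}$ tensored against the Koszul complex of Proposition~\ref{ref-3.2.3-21}, and conclude via the exactness of that complex together with the algebraic Poincar\'e lemma for the polynomial ring $S^{\aff}$. The extra care you take with convergence and the order of taking cohomology is sound but not a departure from the paper's argument.
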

\begin{proof}
Using the decomposition \eqref{ref-4.9-44} we need to show that 
\begin{equation}\label{ref-4.10-46}
M\r \Omega_{S^{\aff}}\otimes \wedge_{R_1} L_1^\ast\ctimes_{R_1} JL\ctimes_{R_2}M
\end{equation}
is a quasi-isomorphism. We filter the r.h.s. of \eqref{ref-4.10-46} with the $J^cL$-adic filtration. 
This means we have to show that
\[
M \r \Omega_{S^{\aff}}\otimes\wedge_RL^\ast\otimes_R SL^\ast\otimes_R M
\]
is a quasi-isomorphism.

Using the proof of Proposition \ref{ref-3.2.3-21} we see that we 
may replace $\wedge_RL^\ast\otimes_R S_RL^\ast$ by $R$. Furthermore since $S^{\aff}$
is a polynomial ring we also find that $\Omega_{S^{\aff}}$ is quasi-isomorphic
to $k$. Thus we are done. 
\end{proof}

\subsection{The universal property of the affine coordinate space}

The affine coordinate space has a universal property, similar to  \eqref{ref-4.1-32}.
\begin{propositions} \label{ref-4.4.1-47} There is a filtered isomorphism of $R$-algebras
\begin{equation}\label{ref-4.11-48}
R^{\aff,L}\ctimes_R JL\cong R^{\aff,L}\ctimes_R \widehat{SL^\ast}
\end{equation}
which induces the canonical isomorphism
\[
R^{\aff,L}\otimes_R \gr(JL)\cong R^{\aff,L}\otimes_R SL^\ast
\]
obtained by extending \eqref{ref-3.8-20}. Moreover $R^{\aff,L}$ is
universal for the existence of such an isomorphism.
\end{propositions}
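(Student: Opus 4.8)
The plan is to realize both $R^{\coord,L}$ and the ring $R^{\aff,L}$ figuring in \eqref{ref-4.11-48} as representing moduli functors, and to pass between them by descent along the free $\Gl_d(k)$-action introduced after \eqref{ref-4.2-33}. First I would record the functorial meaning of $R^{\coord,L}$: by \eqref{ref-4.1-32} it represents the functor sending an $R_1$-algebra $S$ to the set of continuous $S$-algebra isomorphisms $S\ctimes_{R_1}JL\cong S[[t_1,\ldots,t_d]]$ taking $S\ctimes_{R_1}J^cL$ to $(t_1,\ldots,t_d)$. Since $JL$ is formally smooth with $\gr JL=SL^\ast$ by \eqref{ref-3.8-20}, such a coordinate system decomposes canonically into a pair: a filtered $S$-algebra isomorphism $\psi\colon S\ctimes_{R_1}JL\cong S\ctimes_{R_1}\widehat{SL^\ast}$ inducing the canonical isomorphism on associated graded rings, together with a ``frame'', i.e.\ an $S$-module isomorphism $S\otimes_RL^\ast\cong S^{\oplus d}$ (read off from the degree-one part of the associated graded of the coordinate system, and also determining the linear isomorphism $S\ctimes_{R_1}\widehat{SL^\ast}\cong S[[t_1,\ldots,t_d]]$ needed to reconstruct the coordinate system from $\psi$). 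The key point, which I would extract from Lemma \ref{ref-4.1.1-35} together with the definition of the $\Gl_d(k)$-action, is that $\Gl_d(k)$ acts on $R^{\coord,L}$ purely through the frame component, leaving the $\psi$-component fixed.

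With this in hand, the isomorphism \eqref{ref-4.11-48} is obtained by descent. Over $R^{\coord,L}$ I would compose the tautological isomorphism $t$ of \eqref{ref-4.1-32} with the inverse of the linear isomorphism attached to the frame read off from its associated graded, producing a filtered $R^{\coord,L}$-algebra isomorphism $\psi^{\coord}\colon R^{\coord,L}\ctimes_{R_1}JL\to R^{\coord,L}\ctimes_{R_1}\widehat{SL^\ast}$ inducing the canonical isomorphism on associated graded rings. By the previous paragraph $\psi^{\coord}$ is $\Gl_d(k)$-invariant as a morphism of $R^{\coord,L}$-algebras, hence descends to an $R^{\aff,L}$-algebra isomorphism over $R^{\aff,L}=(R^{\coord,L})^{\Gl_d(k)}$; this is \eqref{ref-4.11-48}, and by construction it induces the canonical isomorphism $R^{\aff,L}\otimes_R\gr JL\cong R^{\aff,L}\otimes_RSL^\ast$. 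For the descent step one uses the explicit presentations $R^{\coord,L}=R_1\otimes S^{\coord}$ and $R^{\aff,L}=R_1\otimes S^{\aff}$ of \S\ref{ref-4.3-43}: in characteristic zero $\Gl_d(k)$ is linearly reductive, its action on $\Spec S^{\coord}$ is free (this is already visible on the invertible matrix of degree-one coordinates, on which $\Gl_d(k)$ acts by translation), and $\Spec S^{\coord}\to\Spec S^{\aff}$ is a trivial $\Gl_d(k)$-torsor, so forming invariants is compatible with the completed tensor products appearing in the statement.

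It remains to verify universality. Given an $R_1$-algebra $S$ together with a filtered $S$-algebra isomorphism $\psi\colon S\ctimes_{R_1}JL\cong S\ctimes_{R_1}\widehat{SL^\ast}$ inducing the canonical isomorphism on associated graded rings, I would note that, because $L$ is free of rank $d$ over $R$, the $S$-module $S\otimes_RL^\ast$ is free, so the set of its frames is a \emph{trivial} $\Gl_d(k)$-torsor over $\Spec S$. For each frame the composite of $\psi$ with the corresponding linear isomorphism is a coordinate system in the sense of \eqref{ref-4.1-32}; by the universal property of $R^{\coord,L}$ this gives a morphism (from the total space of the torsor) to $\Spec R^{\coord,L}$ which is manifestly $\Gl_d(k)$-equivariant. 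Passing to quotients by the free $\Gl_d(k)$-actions, and using that the torsor has $\Spec S$ as quotient and that $\Spec R^{\coord,L}/\Gl_d(k)=\Spec R^{\aff,L}$, one obtains the classifying morphism $\Spec S\to\Spec R^{\aff,L}$, i.e.\ an $R_1$-algebra map $R^{\aff,L}\to S$; uniqueness of the map compatible with $\psi$ and \eqref{ref-4.11-48} follows from the corresponding uniqueness for $R^{\coord,L}$ together with the freeness of the $\Gl_d(k)$-actions.

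I expect the main obstacle to be the bookkeeping around the $\Gl_d(k)$-action: one must show cleanly that extracting the frame from a formal coordinate system and dividing it out leaves a $\Gl_d(k)$-\emph{invariant} datum — equivalently, that the action on $R^{\coord,L}$ defined through \eqref{ref-4.2-33} is precisely the change-of-frame action, which is the content one must tease out of Lemma \ref{ref-4.1.1-35} — and then that passing to $\Gl_d(k)$-invariants commutes with the completed tensor products in \eqref{ref-4.11-48}. The pro-finite/filtered nature of $JL$ and $\widehat{SL^\ast}$ forces the usual care with topologies, but, in line with \S\ref{ref-4.2.1-36}–\S\ref{ref-4.3-43}, there are no surprises.
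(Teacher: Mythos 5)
Your proposal is correct and follows essentially the same route as the paper: the paper also passes through the tautologically $\Gl_d$-equivariant identification $R^{\coord,L}\ctimes_R JL\cong S_{R^{\coord,L}}(\tilde{L}^\ast)\,\hat{}\,$, extracts the ``frame'' as the degree-one part of the associated graded ($\tilde{L}^\ast\cong R^{\coord,L}\ctimes_R L^\ast$), and takes $\Gl_d$-invariants, while universality is likewise proved by forming the (trivial, since $L$ is free) frame torsor $\Spec\tilde{W}\to\Spec W$ and descending the classifying map $R^{\coord,L}\to\tilde{W}$. Your explicit remarks on linear reductivity and freeness of the action merely make precise what the paper leaves implicit in ``taking $\Gl_d$-invariants.''
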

\begin{proof} We start with the isomorphism
\begin{equation}
\label{ref-4.12-49}
R^{\coord,L}\ctimes_R JL\cong R^{\coord,L}[[t_1,\ldots,t_d]]
\end{equation}
It follows from the discussion after \eqref{ref-4.2-33} that this
isomorphism if $\Gl_d$-equivariant is we equip the righthand side with
a $\Gl_d$-action which is a combination of the linear action on the
$(t_i)_i$'s and the extension of the $\Gl_d$-action on $R^{\coord,L}$. In
particular this $\Gl_d$-action preserves the $(t_i)_i$-grading.

Put
\[
\tilde{L}^\ast=R^{\coord,L} t_1+\cdots+R^{\coord,L} t_d
\]
considered as a $\Gl_d$-module.
Then (tautologically) we have a $\Gl_d$-equivariant isomorphism
\[
R^{\coord,L}[[t_1,\ldots,t_d]]\cong S_{R^{\coord,L}}(\tilde{L}^\ast)\,\hat{}
\]
Combining this with \eqref{ref-4.12-49} we get a $\Gl_d$-equivariant
isomorphism
\[
R^{\coord,L}\ctimes_R JL\cong S_{R^{\coord,L}}(\tilde{L}^\ast)\,\hat{}
\]
and looking at degree one of the associated graded rings:
\[
R^{\coord,L}\ctimes_R L^\ast\cong \tilde{L}^\ast
\]
Thus 
\begin{equation}
\label{ref-4.13-50}
R^{\coord,L}\ctimes_R JL\cong S_{R^{\coord,L}}(R^{\coord,L}\ctimes_R L^\ast)\,\hat{}\cong
R^{\coord,L}\ctimes\widehat{SL^\ast}
\end{equation}
It now suffices to take $\Gl_d$-invariants to get the isomorphism \eqref{ref-4.11-48}.

\medskip

We will only sketch the proof of universality since we will not need it
in the sequel. Assume that we have an isomorphism
\[
W\ctimes_R JL\cong W\ctimes_R \widehat{SL^\ast}
\]
such that
\[
W\otimes_R \gr(JL)\cong W\otimes_R SL^\ast
\]
We need to construct a corresponding morphism $R^{\aff,L}\r W$. 

We let $\tilde{W}$ be the commutative $W$-algebra which is universal for
the existence of an isomorphism
\[
\tilde{W}\otimes_W (W\otimes_R L)\cong \tilde{W}t_1+\cdots+\tilde{W}t_d
\]
Thus $\Spec \tilde{W}/\Spec W$ is a $\Gl_d$-torsor and in particular
$\tilde{W}^{\Gl_d}\cong W$. We then have
\begin{align*}
\tilde{W}\ctimes_R JL&=\tilde{W}\ctimes_{W} (W\ctimes_R JL)\\
&=\tilde{W}\ctimes_{W}(W\ctimes_R \widehat{SL^\ast})\\
&=\tilde{W}\ctimes_R \widehat{SL^\ast}\\
&=S_{\tilde{W}}(\tilde{W}t_1+\cdots+\tilde{W}t_d)\,\hat{}\\
&=\tilde{W}[[t_1,\ldots,t_d]]
\end{align*}
Hence there exists a corresponding morphism $R^{\coord,L} \r\tilde{W}$. 
Taking $\Gl_d$-invariants yields the requested morphism $R^{\aff,L}\r W$.
One easily checks that this morphism satisfies the appropriate uniqueness
properties. 
\end{proof}
\section{$L_\infty$-algebras}\label{ref-5-51}
In this section we recall some properties of
$L_\infty$-algebras and we fix some notations. 
\subsection{$L_\infty$-algebras and morphisms}
An {\em $L_\infty$-structure} on a vector space $\frak{g}$ is a coderivation $Q$ of
degree one on $S(\frak{g}[1])$ which has square zero. Such a coderivation
is fully determined its ``Taylor coefficients'' which are the coefficients
\[
Q_i:S^i(\frak{g}[1])\xrightarrow{\text{inclusion}}
S(\frak{g}[1])\xrightarrow{Q}
S(\frak{g}[1])\xrightarrow{\text{projection}} \frak{g}[1]
\]
If for $a,b\in \frak{g}$ one puts
\begin{equation}\label{ref-5.1-52}
da=-Q_1(a)\quad\textrm{and}\quad[a,b]=(-1)^{|a|} Q_2(a,b)\,.
\end{equation}
then $d^2=0$ 
and  $d$ is a derivation of degree one of $\frak{g}$ with respect to the binary operation of degree 
zero $[-,-]$. If $\partial^i Q=0$ for $i>2$ then $\frak{g}$ is a DG-Lie algebra. Conversely any DG-Lie
algebra can be made into an $L_\infty$-algebra by defining $Q_1$, $Q_2$ according to 
\eqref{ref-5.1-52} and by putting $Q_i=0$ for $i>2$.

\medskip

A morphism of $L_\infty$-algebras $\frak{g}\r \frak{h}$, or {\em $L_\infty$-morphism} is by definition
an augmented coalgebra map of degree zero $S(\frak{g}[1])\r S(\frak{h}[1])$ 
commuting with $Q$. A morphism of $L_\infty$-algebras is again determined
by its Taylor coefficients
\[
\psi_i:S^i(\frak{g}[1])\xrightarrow{\text{inclusion}}
S(\frak{g}[1])\xrightarrow{\psi}
S(\frak{h}[1])\xrightarrow{\text{projection}} \frak{h}[1]
\]
One has $d\psi_1=\psi_1 d$
and hence $\psi_1$ defines a morphism of complexes. 

\medskip

The above notions make sense in any symmetric monoidal category. We
will use them in the case of filtered complete linear topological
vector spaces. Of course in that case the symmetric products have to
be replaced by completed symmetric products. 

\subsection{Twisting}\label{ref-5.2-53}

Assume that $\psi:\frak{g}\r \frak{h}$ is a $L_\infty$-morphism
between $L_\infty$-algebras. We assume in addition that we are in a
complete filtered setting (in the category of graded vector spaces).
I.e.\ there are ascending filtrations $(F_n \frak{g})_n$
$(F_n\frak{h})_n$, on $\frak{g}$, $\frak{h}$ and furthermore
$\frak{g}$, $\frak{h}$ are graded complete for the topologies induced
by these filtrations. We assume in addition that $\psi$ is compatible with
the filtrations (i.e.\ it is a filtered map for the induced
filtrations on $S(\frak{g}[1])$ and $S(\frak{h}[1])$).

Let $\omega\in F_{-1}\frak{g}_1$ be a solution of the
$L_\infty$-Maurer-Cartan equation in $\frak{g}$.
\begin{equation}\label{ref-5.2-54}
\sum_{i\ge 1}\frac{1}{i!} Q_i(\omega^i)=0\,.
\end{equation}
Define $Q_\omega$, $\psi_\omega$ and $\omega'$ by 
\begin{align}
\label{ref-5.3-55}
Q_{\omega,i}(\gamma)&=\sum_{j\ge 0} \frac{1}{j!} Q_{i+j}
(\omega^j \gamma)\qquad \text{(for $i>0$)}\\
\label{ref-5.4-56}
\psi_{\omega,i}(\gamma)&=\sum_{j\ge 0} \frac{1}{j!} \psi_{i+j}
(\omega^j \gamma)\qquad \text{(for $i>0$)}\\
\label{ref-5.5-57}
\omega'&=\sum_{j\ge 1} \frac{1}{j!} \psi_j
(\omega^j )
\end{align}
for $\gamma\in S^i(\frak{g}[1])$. Then by  \cite[Thm 3.21,3.27]{ye7} 
$\omega'\in F_{-1}\frak{h}$ is a solution of the Maurer-Cartan
equation in $\frak{h}$ and furthermore $\frak{g}$, $\frak{h}$, when
equipped with $Q_\omega$, $Q_{\omega'}$ are again $L_\infty$-algebras.
If we denote these by $\frak{g}_\omega$ and $\frak{h}_{\omega'}$ then
$\psi_\omega$ defines a filtered $L_\infty$-map $\frak{g}_\omega\r
\frak{h}_{\omega'}$. 
\begin{remarks} Formulas similar to
  (\ref{ref-5.3-55}-\ref{ref-5.5-57}) also appear at other places in
  the literature e.g. \cite[eq. (60)(61)]{Dogushev} and \cite[eq.
  (4.4)]{Tsygan}. They are implicit in the language of formal
  $Q$-manifolds employed by Kontsevich in \cite{Ko3},
\end{remarks}
If $\frak{g}$ is a DG-Lie algebra then the $L_\infty$-Maurer-Cartan equation 
translates into the usual Maurer-Cartan equation
\begin{equation}\label{ref-5.6-58}
d\omega+\frac{1}{2}[\omega,\omega]=0
\end{equation}
and we obtain $Q_{\omega,1}(\gamma)=Q_1(\gamma)+Q_2(\omega\gamma)$, 
$Q_{\omega,2}(\gamma)=Q_2(\gamma)$ and $Q_{\omega,i}(\gamma)=0$ for $i\ge 3$. 
Translated into differentials and Lie brackets we get 
\begin{equation}\label{ref-5.7-59}
d_\omega=d+[\omega ,-]\quad\textrm{and}\quad[-,-]_\omega=[-,-]\,.
\end{equation}
\subsection{Descent for $L_\infty$-morphisms}\label{ref-5.3-60}
Assume that $\frak{g}$ is an algebra over a DG-operad~$\Oscr$ with underlying
graded operad $\widetilde{O}$ and 
consider a set of $\widetilde{O}$-derivations of degree -1 $(i_v)_{v\in S}$ on
$\frak{g}$. Put $L_v=di_v+i_vd$.
This is a derivation of $\frak{g}$ of degree zero which commutes with $d$. 

Put 
\begin{equation}\label{ref-5.8-61}
\frak{g}^{S}=\{w\in \frak{g}\mid \forall v\in S:i_v w= L_v w=0\}
\end{equation}
It is easy to see that $\frak{g}^{S}$ is an algebra over $\Oscr$ as well. Informally
we will call such a set of derivations $(i_v)_{v\in S}$ an $S$-action.
\begin{remarks}
By definition the
notion of an $S$-action only depends on the graded structure of~$\frak{g}$. However the construction of $\frak{g}^{S}$ also depends
on the differential. 
\end{remarks}
The following is a slightly strengthened version of \cite[Prop.\ 7.6.3]{VdB35}.
\begin{propositions}\label{ref-5.3.2-62}
  Assume that $\psi$ is an $L_\infty$-morphism $\frak{g}\r \frak{h}$
  between $L_\infty$-algebras equipped with an $S$-action as
  above. Assume that $\psi$ commutes with the $S$-action in the sense
  that for all $v\in S$,
\[
i_v\psi_n(w_1,\ldots,w_n)=\sum (-1)^{|w_1|+\cdots+|w_{l-1}|-(l-1)} \psi_n(w_1,\ldots,
i_v(w_l),\ldots,w_n)
\]
Then $\psi$ descends to an
$L_\infty$-morphism $\psi^{S}:\frak{g}^{S}\r\frak{h}^{S}$. 
\end{propositions}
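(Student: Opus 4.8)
The plan is to regard $\psi$ as the morphism of coaugmented cocommutative coalgebras $S(\frak{g}[1])\r S(\frak{h}[1])$ that commutes with $Q$, and to show that it carries the subcoalgebra $S(\frak{g}^{S}[1])$ into $S(\frak{h}^{S}[1])$. Since a coalgebra morphism into the cofree coalgebra $S(\frak{h}^{S}[1])$ is determined by, and may be prescribed through, its corestriction to $\frak{h}^{S}[1]$, this inclusion holds as soon as each Taylor coefficient $\psi_n$ sends $S^n(\frak{g}^{S}[1])$ into $\frak{h}^{S}[1]$; and once that is known the restriction $\psi^{S}$ is automatically an $L_\infty$-morphism $\frak{g}^{S}\r\frak{h}^{S}$, because $Q$ preserves both $S(\frak{g}^{S}[1])$ and $S(\frak{h}^{S}[1])$ — this is precisely the (already recalled) statement that $\frak{g}^{S}$ and $\frak{h}^{S}$ are again algebras over $\Oscr$. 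Thus everything reduces to proving, for all $v\in S$ and all $w_1,\ldots,w_n\in\frak{g}^{S}$, that $i_v\psi_n(w_1,\ldots,w_n)=0$ and $L_v\psi_n(w_1,\ldots,w_n)=0$.

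The first identity is immediate from the compatibility hypothesis on $\psi$: the expression $i_v\psi_n(w_1,\ldots,w_n)$ equals a signed sum of terms $\psi_n(w_1,\ldots,i_v(w_l),\ldots,w_n)$, each of which vanishes because $i_v(w_l)=0$ for $w_l\in\frak{g}^{S}$. For the second I would write $L_v=di_v+i_vd$, so that
\[
L_v\psi_n(w_1,\ldots,w_n)=d\bigl(i_v\psi_n(w_1,\ldots,w_n)\bigr)+i_v\bigl(d\psi_n(w_1,\ldots,w_n)\bigr);
\]
the first summand vanishes by the identity just proved, and since the differential of $\frak{h}$ equals $-Q_1$ it remains to show $i_vQ_1\psi_n(w_1,\ldots,w_n)=0$.

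For this I would use the component of arity $n$, with values in $\frak{h}[1]$, of the relation $\psi\circ Q=Q\circ\psi$, evaluated on $w_1\cdots w_n$:
\begin{multline*}
\sum_{k,\ (i_\bullet)}\pm\,\psi_{n-k+1}\bigl(Q_k(w_{i_1},\ldots,w_{i_k}),w_{j_1},\ldots,w_{j_{n-k}}\bigr)\\
=\sum_{B_1\sqcup\cdots\sqcup B_m=\{1,\ldots,n\}}\pm\,Q_m\bigl(\psi_{|B_1|}(w_{B_1}),\ldots,\psi_{|B_m|}(w_{B_m})\bigr),
\end{multline*}
whose right-hand side contains, for the one-block partition, the term $\pm Q_1\bigl(\psi_n(w_1,\ldots,w_n)\bigr)$. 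I solve this relation for that term and apply $i_v$. On the left-hand side every argument of every $\psi$ lies in $\frak{g}^{S}$ — the $w_j$ by hypothesis, and $Q_k(w_{i_1},\ldots,w_{i_k})$ because $\frak{g}^{S}$ is closed under $Q_k$ — so the compatibility hypothesis makes $i_v$ annihilate each such term. On the right-hand side, for each block term with $m\ge 2$ one uses that $i_v$ is an $\widetilde{O}$-derivation of $\frak{h}$, so $i_vQ_m(\ldots)$ is a signed sum of terms $Q_m(\ldots,i_v\psi_{|B_l|}(w_{B_l}),\ldots)$, each of which vanishes by the already-established first identity applied to the arguments $w_{B_l}\subset\frak{g}^{S}$. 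Therefore $i_vQ_1\psi_n(w_1,\ldots,w_n)=0$, hence $L_v\psi_n(w_1,\ldots,w_n)=0$, so $\psi_n$ maps $S^n(\frak{g}^{S}[1])$ into $\frak{h}^{S}[1]$ and, as explained, $\psi$ descends to $\psi^{S}$.

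The only real work, and the step I would be most careful with, is the Koszul-sign bookkeeping: writing the relation $\psi\circ Q=Q\circ\psi$ in Taylor coefficients so that the one-block term is cleanly isolated, and checking that the signs generated when the degree $-1$ derivation $i_v$ is moved through $\psi_n$ and through each $Q_m$ match those in the stated hypothesis. Conceptually there is no obstacle: the argument only uses that $i_v$ kills $\frak{g}^{S}$, that $\frak{g}^{S}$ and $\frak{h}^{S}$ are stable under all structure operations, and that $\psi$ together with those operations is $i_v$-equivariant, so that in the identity above every term except the one under consideration is visibly zero.
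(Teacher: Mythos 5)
The paper gives no proof of this proposition: it is stated as a slight strengthening of \cite[Prop.~7.6.3]{VdB35} and the argument is deferred to that reference, so there is no internal proof to compare against. Your proof is correct and is essentially the expected one: the only point of substance is showing $L_v\psi_n(w_1,\ldots,w_n)=0$, and your device of isolating the one-block term $Q_1\psi_n(w_1,\ldots,w_n)$ in the arity-$n$ corestriction of $\psi\circ Q=Q\circ\psi$ and killing every other term with $i_v$ (using that $\frak{g}^{S}$ is stable under all the $Q_k$ and that $i_v$ is an $\widetilde{O}$-derivation of $\frak{h}$) is exactly what makes the argument robust against the Koszul-sign bookkeeping you rightly identify as the only delicate point, since every term being moved around is individually shown to vanish.
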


\subsection{Compatibility with twisting}

Assume that $\frak{g}$, $\frak{h}$ are topological $DG$-Lie algebras
and $\psi$ is an $L_\infty$-morphism $\frak{g}\r \frak{h}$. As in
\S\ref{ref-5.2-53} we assume that $\frak{g}$, $\frak{h}$ are complete
filtered and $\psi$ is compatible with the filtration. Our aim is to
understand the behavior of $S$-actions under twisting.

Assume that $\frak{g}$ and $\frak{h}$ are equipped with a $S$-action and assume that $\psi$ commutes 
with this action (as in Proposition \ref{ref-5.3.2-62}). Let $\omega\in F_{-1}\frak{g}_1$ be a solution 
to the Maurer-Cartan equation. 
Since twisting does not change the Lie bracket (see \eqref{ref-5.7-59}), 
$S$ acts on  $\frak{g}_\omega$ and $\frak{h}_\omega$ as well. The following
is \cite[Prop.\ 7.7.1]{VdB35}.
\begin{propositions}\label{ref-5.4.1-63}
Assume that for $i\ge 2$ and all $v\in S$, $\gamma\in S^{i-1}(\frak{g}[1])$ we have
\begin{equation}
\label{ref-5.9-64}
\psi_i(i_v\omega \cdot \gamma)=0
\end{equation}
Then $\psi_\omega$ commutes with the $S$-action on  $\frak{g}_\omega$ and $\frak{h}_{\omega'}$.
\end{propositions}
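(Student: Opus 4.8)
The plan is to check directly that the twisted morphism $\psi_\omega$ satisfies the commutation hypothesis of Proposition~\ref{ref-5.3.2-62} with respect to the $S$-action; that is, for every $v\in S$ and homogeneous $w_1,\dots,w_n\in\frak{g}$ with $n\ge 1$,
\[
i_v\psi_{\omega,n}(w_1,\dots,w_n)=\sum_{l=1}^n(-1)^{|w_1|+\cdots+|w_{l-1}|-(l-1)}\,\psi_{\omega,n}(w_1,\dots,i_v(w_l),\dots,w_n)\,.
\]
Everything else is already in place: as noted just before the statement, twisting does not alter the underlying graded operad structure, so the $i_v$ still define an $S$-action on $\frak{g}_\omega$ and $\frak{h}_{\omega'}$, and by Proposition~\ref{ref-5.3.2-62} the displayed identity is exactly what is needed in order for $\psi_\omega$ to descend to an $L_\infty$-morphism $\frak{g}_\omega^S\to\frak{h}_{\omega'}^S$.

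To prove the identity I would insert the defining formula \eqref{ref-5.4-56}, namely $\psi_{\omega,n}(\gamma)=\sum_{j\ge 0}\frac1{j!}\psi_{n+j}(\omega^j\gamma)$, and move $i_v$ inside the sum (legitimate, since $i_v$ is continuous and the series converges in the complete filtered topology). For each fixed $j$ I apply the hypothesis that $\psi$ commutes with the $S$-action to the $n+j$ arguments $\omega,\dots,\omega,w_1,\dots,w_n$ (with $j$ copies of $\omega$). This writes $i_v\psi_{n+j}(\omega^j w_1\cdots w_n)$ as a signed sum of $n+j$ terms, one for each slot into which $i_v$ is inserted. The crucial sign bookkeeping is that $\omega\in\frak{g}_1$ has degree $0$ in $\frak{g}[1]$, so passing $i_v$ across a copy of $\omega$ costs no sign: the $j$ terms where $i_v$ hits an $\omega$ are all equal to $\psi_{n+j}\big((i_v\omega)\cdot\omega^{j-1}w_1\cdots w_n\big)$, while the term where $i_v$ hits $w_l$ carries the sign $(-1)^{|w_1|+\cdots+|w_{l-1}|-(l-1)}$, independently of $j$.

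Summing over $j$ and separating the two kinds of terms, the ``$i_v$ hits an $\omega$'' contribution is
\[
\sum_{j\ge 1}\frac1{j!}\cdot j\cdot\psi_{n+j}\big((i_v\omega)\cdot\omega^{j-1}w_1\cdots w_n\big)=\sum_{j\ge 1}\frac1{(j-1)!}\,\psi_{n+j}\big((i_v\omega)\cdot\omega^{j-1}w_1\cdots w_n\big)\,,
\]
and since $n\ge 1$ and $j\ge 1$ force $n+j\ge 2$, every summand vanishes by hypothesis~\eqref{ref-5.9-64} applied with $i=n+j$ and $\gamma=\omega^{j-1}w_1\cdots w_n\in S^{i-1}(\frak{g}[1])$. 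Interchanging the finite sum over $l$ with the sum over $j$ in the remaining ``$i_v$ hits $w_l$'' contribution gives $\sum_{l=1}^n(-1)^{|w_1|+\cdots+|w_{l-1}|-(l-1)}\sum_{j\ge 0}\frac1{j!}\psi_{n+j}\big(\omega^j w_1\cdots(i_vw_l)\cdots w_n\big)$, and the inner sum is precisely $\psi_{\omega,n}(w_1,\dots,i_v(w_l),\dots,w_n)$ by \eqref{ref-5.4-56}. This is the required identity.

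There is no genuine obstacle here; the argument is pure bookkeeping, and the single point that must be gotten exactly right is the sign analysis --- in particular the fact that $\omega$ sits in degree zero of the shifted space, which simultaneously collapses the $\omega$-insertion terms into a multiplicity-$j$ term and makes the surviving signs coincide with those in the statement --- together with the observation that hypothesis~\eqref{ref-5.9-64} is precisely designed to annihilate the terms in which $i_v$ lands on $\omega$. (One could also package this coordinate-freely: $\psi$ commuting with the $S$-action says that $\psi$ intertwines the coderivations of $S(\frak{g}[1])$ and $S(\frak{h}[1])$ extending $i_v$; twisting conjugates $\psi$ by the translation coalgebra morphisms attached to $\omega$ and $\omega'$; and \eqref{ref-5.9-64} says that $i_v$ commutes with translation by $\omega$ up to terms that vanish after projection to $\frak{h}[1]$. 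I would nevertheless carry out the explicit computation above, which is shorter.)
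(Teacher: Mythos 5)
Your verification is correct, and the key points are all in order: the sign analysis (the copies of $\omega$ sit in degree zero of $\frak{g}[1]$, so the $j$ insertion terms collapse with no signs and the surviving signs match those of Proposition \ref{ref-5.3.2-62}), the use of hypothesis \eqref{ref-5.9-64} with $i=n+j\ge 2$ to kill the terms where $i_v$ lands on $\omega$, and the continuity needed to exchange $i_v$ with the completed sum. The paper itself offers no proof of this proposition --- it simply cites \cite[Prop.\ 7.7.1]{VdB35} --- so your direct computation is exactly the argument being invoked, and there is nothing to add.
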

\section{Formality for Lie algebroids}
In this section we prove Theorem \ref{ref-1.6-8}.  We first prove a more
precise result in the ring case. To do so we use the existence of the
desired $L_\infty$-quasi-isomorphism in the local case, and extend it
to the ring case with the help of coordinate spaces constructed in the
previous section.  We end the proof by sheafifying the ring case,
using appropriate functorial properties.
\subsection{The formality in the ring case and its functorial properties}

\begin{theorem}\label{ref-6.1-65}
  Let $R$ be a $k$-algebra. Assume that $L$ is a Lie-algebroid over
  $R$ which is free of rank $d$.
There exists a canonical DG-Lie algebra
$\mathfrak{l}^L$ together with $L_\infty$-quasi-isomorphisms
\begin{equation}\label{ref-6.1-66}
T^{L}_{\poly}(R)\longrightarrow\mathfrak{l}^L\longleftarrow D^{L}_{\poly}(R)
\end{equation}
such that the induced map
\[
\mu:T^{L}_{\poly}(R)\longrightarrow H^\ast(D^{L}_{\poly}(R))
\]
is given by the HKR-formula \eqref{ref-3.1-12}. 

The DG-Lie algebra $\mathfrak{l}^L$ and the quasi-isomorphisms in \eqref{ref-6.1-66} are functorial
in the following sense: assume that $\phi:(R,L)\r (T,M)$ is an algebraic morphism of Lie algebroids 
which induces an isomorphism
\begin{equation}\label{ref-6.2-67}
T\otimes_R L\cong M
\end{equation}
then there is an associated commutative diagram
\begin{equation}\label{ref-6.3-68}
\xymatrix{
T^{L}_{\poly}(R)\ar[r]\ar[d] & \mathfrak{l}^L\ar[d]^{\mathfrak{l}^{\phi}} & D^{L}_{\poly}(R)\ar[l]\ar[d]\\
T^{M}_{\poly}(T)\ar[r] & \mathfrak{l}^M & D^{M}_{\poly}(T)\ar[l]
}
\end{equation}
and $\mathfrak{l}^{\phi\theta}=\mathfrak{l}^{\phi}\circ\mathfrak{l}^{\theta}$. 
\end{theorem}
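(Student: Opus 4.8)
The plan is to build $\mathfrak{l}^L$ by globalizing a \emph{local} formality isomorphism via the affine coordinate space $R^{\aff,L}$ introduced in \S\ref{ref-4.3-43}, following the strategy of \cite{VdB35}. Concretely, I would first recall that over the formal disc $k[[t_1,\ldots,t_d]]$ there exist $L_\infty$-quasi-isomorphisms $T_{\poly}(k[[t]]) \to \mathfrak{l}_d \leftarrow D_{\poly}(k[[t]])$ for some DG-Lie algebra $\mathfrak{l}_d$ (e.g.\ take $\mathfrak{l}_d = D_{\poly}$ and the HKR/Kontsevich map in one direction, the identity in the other), which is $\GL_d(k)$-equivariant and whose linear term realizes the HKR-formula. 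I would then define $\mathfrak{l}^L$ by the two-step recipe: (i) form the fiberwise object $C^{\aff,L}\ctimes_{R_1} JL \ctimes_{R_2}(-)$ applied to $T_{\poly}^{L_2}(R_2)$, $D_{\poly}^{L_2}(R_2)$ and an analogous "$\mathfrak{l}$" built from $\mathfrak{l}_d$, using Lemmas \ref{ref-3.3.4-29}, \ref{lemma3.3.6} to identify these with $T_{\poly,C^{\aff,L}}$, $D_{\poly,C^{\aff,L}}$ of $C^{\aff,L}\ctimes_{R_1} JL$; (ii) twist by the Maurer-Cartan element $\omega$ of \eqref{ref-4.5-38} and then take $\mathfrak{gl}_d(k)$-invariants in the sense of \eqref{ref-5.8-61}, using the $S = \mathfrak{gl}_d(k)$-action given by the contractions $i_{\bar v}$ of Lemma \ref{ref-4.2.1-39}. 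The quasi-isomorphisms in \eqref{ref-6.1-66} are obtained by applying the twisted morphism $\psi_\omega$ of \S\ref{ref-5.2-53} fiberwise and descending to invariants via Proposition \ref{ref-5.3.2-62}; that the composite is the HKR-formula on cohomology follows from tracking the linear Taylor coefficient through the identifications and from Lemma \ref{ref-4.3.1-45}, which guarantees that passing to the $C^{\aff,L}\ctimes JL$-model does not change cohomology.

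For the functoriality statement, the key point is that every ingredient in the construction is natural in $(R,L)$ for algebraic morphisms $\phi:(R,L)\to(T,M)$ inducing $T\otimes_R L\cong M$. First I would observe that such a $\phi$ induces compatible maps $JL\to JM$, hence (after base change along $R\to T$) a map $R^{\coord,L}\to R^{\coord,M}$ and $R^{\aff,L}\to R^{\aff,M}$ — this uses precisely the isomorphism \eqref{ref-6.2-67}, which ensures the ranks match and the trivialization data transfers, so the universal property of Proposition \ref{ref-4.4.1-47} applies. The $\GL_d(k)$-actions are intertwined, the Maurer-Cartan forms $\omega$ correspond under \cite[\S 6.4]{VdB35} (they are characterized by \eqref{eqmc}, which is natural), and the fiberwise $L_\infty$-morphisms are literally the same local data $\mathfrak{l}_d$, $\psi$ on both sides. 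Therefore the twisted, descended objects and maps assemble into the commutative diagram \eqref{ref-6.3-68}, and the identity $\mathfrak{l}^{\phi\theta}=\mathfrak{l}^\phi\circ\mathfrak{l}^\theta$ is inherited from the corresponding identity for the maps of coordinate spaces and jet algebras, since composition of algebraic morphisms of Lie algebroids is strictly associative and all constructions are strict functors of that data.

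I expect the main obstacle to be the \emph{compatibility of twisting with the $S$-action under $\phi$}, i.e.\ checking the hypothesis \eqref{ref-5.9-64} of Proposition \ref{ref-5.4.1-63} ($\psi_i(i_{\bar v}\omega\cdot\gamma)=0$ for $i\ge 2$) simultaneously and compatibly on both $(R,L)$ and $(T,M)$. By Lemma \ref{ref-4.2.1-39} we have $i_{\bar v}\omega = 1\otimes v \in R^{\coord,L}\ctimes\Der_k(k[[t]])$, which is a \emph{linear} vector field, so vanishing of $\psi_i(i_{\bar v}\omega\cdot\gamma)$ for $i\ge 2$ becomes a statement about the higher Taylor coefficients of the local formality morphism on linear vector fields — a property one must either extract from the explicit Kontsevich/Tamarkin construction or impose as part of the definition of the "good" local morphism. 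Ensuring this holds uniformly so that the descent maps $\psi^{\mathfrak{gl}_d}_\omega$ on both rows of \eqref{ref-6.3-68} are compatible with $\mathfrak{l}^\phi$ is the delicate bookkeeping; everything else is a routine, if lengthy, verification that the identifications of Lemmas \ref{ref-3.3.4-29}--\ref{lemma3.3.6} are natural in the relevant sense.
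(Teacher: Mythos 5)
Your construction coincides with the paper's: globalize the local formality morphism over $C^{\coord,L}$, twist by the Maurer--Cartan form $\omega$, descend to $C^{\aff,L}$ by taking $\mathfrak{gl}_d(k)$-invariants via Propositions \ref{ref-5.3.2-62} and \ref{ref-5.4.1-63} together with $i_{\bar{v}}\omega=1\otimes v$ (equation \eqref{ref-4.6-40}) and the vanishing of the higher Taylor coefficients on linear vector fields (property (P5)), and take $\mathfrak{l}^L=D_{\poly,C^{\aff,L}}(C^{\aff,L}\ctimes_{R_1}JL)$. You correctly isolate the descent condition \eqref{ref-5.9-64} as the point where a special property of the local morphism must be invoked, and your treatment of functoriality is in line with the paper's (which explicitly leaves that verification to the reader).

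There is, however, a genuine gap in your claim that the induced map $T^L_{\poly}(R)\to H^\ast(D^L_{\poly}(R))$ is the HKR formula ``by tracking the linear Taylor coefficient.'' Twisting changes the linear Taylor coefficient: by \eqref{ref-5.4-56} one has $\tilde{\Uscr}_{\omega,1}(\gamma)=\sum_{j\ge 0}\tfrac{1}{j!}\tilde{\Uscr}_{j+1}(\omega^j\gamma)$, so \emph{all} higher Taylor coefficients of the local morphism contribute to $\Vscr^{\aff}_1$. At the chain level $\Vscr^{\aff}_1$ is not the HKR map: it equals $\operatorname{HKR}\circ(\widetilde{\td}(\Nscr^{\aff})^{1/2}\wedge-)$ by \eqref{ref-9.9-125}, which is exactly why Theorem \ref{ref-1.3-0} acquires a Todd class. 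What rescues the statement of Theorem \ref{ref-6.1-65} is that the $j$-th summand has bidegree $(j,-j)$ for the column grading coming from $C^{\aff,L}$, and the paper shows---via the spectral sequence of the column filtration, using Lemma \ref{ref-4.3.1-45} and the (filtered) projectivity of $D^{L_2}_{\poly}(R_2)$ and its cohomology---that the $E_2$-terms are concentrated in column zero, so the components with $j>0$ act as zero on cohomology and only the $j=0$ component, which is HKR by \eqref{ref-6.19-86}, survives. This filtration argument is a necessary idea, not routine bookkeeping. A smaller omission: you also need property (P4) to conclude that the twisted Maurer--Cartan element $\omega'=\sum_{j\ge 1}\tfrac{1}{j!}\tilde{\Uscr}_j(\omega^j)$ on the $D_{\poly}$-side reduces to $\tilde{\Uscr}_1(\omega)$; without this the target of $\tilde{\Uscr}_\omega$ is not identified with $D_{\poly,C^{\coord,L}}(C^{\coord,L}\ctimes_{R_1}JL,{}^1\nabla^{\coord})$ and the right-hand leg of the roof \eqref{ref-6.1-66} does not materialize.
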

The proof of this theorem will take the greater part of the next two
subsections.

\subsection{The local formality quasi-isomorphism}\label{ref-6.2-69}

Let $F=k[[t_1,\ldots,t_d]]$. Kontsevich (over the reals) and Tamarkin
(over the rationals) construct 
an $L_\infty$-quasi-isomorphism \cite{Ko3,Tamarkin} 
\begin{equation}\label{ref-6.4-70}
\Uscr:T_{\poly}(F)\r D_{\poly}(F)
\end{equation}
where $\Uscr_1$ is given by the HKR formula\footnote{The sign
$(-)^{p(p-1)/2}$ is not present in Kontsevich's setting. I this paper we
  slightly modify Kontsevich's quasi-isomorphism. See \S\ref{ref-8-109}.} 
\begin{equation}\label{ref-6.5-71}
\Uscr_1(\partial_{i_1}\wedge\cdots \wedge\partial_{i_p})= 
(-1)^{p(p-1)/2}\frac{1}{p!}\sum_{\sigma\in S_p}(-1)^{\sigma}\partial_{i_{\sigma(1)}}\otimes \cdots \otimes \partial_{i_\sigma(p)}
\end{equation}
with $\partial_i=\partial/\partial t_i$. 
This quasi-isomorphism has two supplementary 
properties which are crucial for its extension to the global case.
\begin{enumerate}
\item[(P4)]
$\Uscr_q(\gamma_1\cdots \gamma_q)=0$ for $q\ge 2$ and 
$\gamma_1,\ldots,\gamma_q\in T^{\text{poly},1}(F)$.\footnote{For degree reasons, 
this is always true if $q>2$. }
\item[(P5)]
$\Uscr_q(\gamma\alpha)=0$ for $q\ge 2$ and $\gamma\in
\frak{gl}_d(k)\subset T^{\text{poly},1}(F)$. 
\end{enumerate}
For Tamarkin's quasi-isomorphism
the fact that properties (P4) and (P5) hold has been proved in \cite{halb}. 

\subsection{Proof of Theorem \ref{ref-1.6-8} in the ring case}\label{ref-6.3-72}

\subsubsection{Resolutions}

In this section we construct resolutions of $T_{\poly}^L(R)$ and $D_{\poly}^L(R)$. 
These are jet analogues of the {\it Dolgushev-Fedosov resolutions} in \cite[Section 2]{cal}. \\

Since the action of $UL_2$ on $C^{\aff,L}\ctimes_{R_1} JL$ commutes
with ${}^1\nabla^{\aff}$ we obtain morphisms of DG-Lie algebras:
\begin{equation}\label{ref-6.6-73}
\begin{split}
  T_{\poly}^{L_2}(R_2)&\r  T_{\poly,C^{\aff,L}}(C^{\aff,L}\ctimes_{R_1} JL,{}^1\nabla^{\aff})\\
  D_{\poly}^{L_2}(R_2)&\r D_{\poly,C^{\aff,L}}(C^{\aff,L}\ctimes_{R_1}
  JL,{}^1\nabla^{\aff})\,.
\end{split}
\end{equation}
\begin{propositions}
The morphisms in \eqref{ref-6.6-73} are quasi-isomorphisms. 
\end{propositions}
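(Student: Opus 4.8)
The plan is to prove both quasi-isomorphisms at once by passing to associated graded objects for the $J^cL$-adic filtration, where everything linearizes and reduces to a Koszul-type computation already carried out in the proof of Proposition~\ref{ref-3.2.3-21}. First I would observe that the source DG-Lie algebras $T_{\poly}^{L_2}(R_2)$ and $D_{\poly}^{L_2}(R_2)$ are carried, via Lemma~\ref{ref-3.3.4-29}, isomorphically onto $(C^{\aff,L}\ctimes_{R_1} JL)\ctimes_{R_2} T_{\poly}^{L_2}(R_2)$ and $(C^{\aff,L}\ctimes_{R_1} JL)\ctimes_{R_2} D_{\poly}^{L_2}(R_2)$ respectively, where on the right the differential is $[{}^1\nabla^{\aff},-]$ under the identification $\nabla\otimes\id \leftrightarrow [\nabla,-]$ furnished by that lemma. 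So the content of the proposition is exactly that the canonical maps
\[
T_{\poly}^{L_2}(R_2)\r (C^{\aff,L}\ctimes_{R_1} JL)\ctimes_{R_2} T_{\poly}^{L_2}(R_2),
\qquad
D_{\poly}^{L_2}(R_2)\r (C^{\aff,L}\ctimes_{R_1} JL)\ctimes_{R_2} D_{\poly}^{L_2}(R_2)
\]
are quasi-isomorphisms; but $T_{\poly}^{L_2}(R_2)$ and $D_{\poly}^{L_2}(R_2)$ are free as $R_2$-modules (they are built from $\wedge_R L$ and $T_R(UL)$, which are free since $L$ is free of rank $d$), so this is precisely the statement of Lemma~\ref{ref-4.3.1-45} applied to $M=T_{\poly}^{L_2}(R_2)$ and $M=D_{\poly}^{L_2}(R_2)$.

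Thus the real work is essentially already done, and the proof is a matter of assembling the pieces correctly. The key steps, in order, are: (i) invoke Lemma~\ref{ref-3.3.4-29} to rewrite the codomains of \eqref{ref-6.6-73} as $(C^{\aff,L}\ctimes_{R_1} JL)\ctimes_{R_2} M$ with $M = T_{\poly}^{L_2}(R_2)$, resp.\ $D_{\poly}^{L_2}(R_2)$, noting the stated compatibility between $\nabla\otimes\id$ and $[\nabla,-]$ so that the differentials match up; (ii) check that $T_{\poly}^{L_2}(R_2)$ and $D_{\poly}^{L_2}(R_2)$ are free $R_2$-modules (here one uses that $L$ is free of rank $d$, so $\wedge_R L$ and all $T_R(UL)$ are free, the latter by the PBW-type filtration on $UL$ whose associated graded is $S_R L$); (iii) apply Lemma~\ref{ref-4.3.1-45} with these choices of $M$ to conclude that each canonical map $M\r (C^{\aff,L}\ctimes_{R_1} JL,{}^1\nabla^{\aff})\ctimes_{R_2} M$ is a quasi-isomorphism; (iv) unwind the identifications to confirm that the composite agrees with the map in \eqref{ref-6.6-73}.

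The main obstacle — really the only point requiring care — is step~(iv): verifying that the quasi-isomorphism produced abstractly via Lemma~\ref{ref-4.3.1-45} and the $JL$-module isomorphism of Lemma~\ref{ref-3.3.4-29} literally coincides with the concrete DG-Lie algebra map of \eqref{ref-6.6-73}, which is defined directly from the commuting $UL_2$-action on $C^{\aff,L}\ctimes_{R_1}JL$. Both maps are $R_2$-algebra (resp.\ $R_2$-linear DG-Lie) maps sent to the ``identity-like'' element on the $M$-factor, so they agree by construction once one checks the generators; I would spell this out only for the poly-differential case and remark that poly-vector fields are analogous. One also has to confirm that the freeness used in step~(ii) is the freeness hypothesis of Lemma~\ref{ref-4.3.1-45} (it is: that lemma only requires $M$ to be a free $R_2$-module, with no finiteness assumption, which is exactly what we have). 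With these checks the proof is complete.
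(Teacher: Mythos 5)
Your proposal is correct and follows exactly the paper's own proof, which simply cites Lemma \ref{ref-3.3.4-29} and Lemma \ref{ref-4.3.1-45}; you have merely spelled out how the two lemmas combine (freeness of $T_{\poly}^{L_2}(R_2)$ and $D_{\poly}^{L_2}(R_2)$ over $R_2$, compatibility of differentials, identification of the composite with \eqref{ref-6.6-73}), which is the intended assembly.
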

\begin{proof}
This follows from lemmas \ref{ref-3.3.4-29} and \ref{ref-4.3.1-45}. 
\end{proof}

\subsubsection{The formality map on coordinate spaces}

The local $L_\infty$-quasi-isomorphism 
\[
\Uscr:T_{\poly}(F)\r D_{\poly}(F)
\]
extends linearly to an $L_\infty$-quasi-isomorphism 
\[
\tilde{\Uscr}:C^{\coord,L}\ctimes T_{\poly}(F)
\r C^{\coord,L}\ctimes  D_{\poly}(F)
\]
One easily verifies that the canonical maps
\begin{equation}\label{ref-6.7-74}
\begin{split}
C^{\coord,L}\ctimes T_{\poly}(F)&\r~
T_{\poly,C^{\coord,L}}(C^{\coord,L}\ctimes F)\\
C^{\coord,L}\ctimes D_{\poly}(F)&\r~
D_{\poly,C^{\coord,L}}(C^{\coord,L}\ctimes F)
\end{split}
\end{equation}
are isomorphisms of DG-Lie algebras. 
Thus we obtain a corresponding $L_\infty$-quasi-isomorphism
$$
\tilde{\Uscr}:T_{\poly,C^{\coord,L}}(C^{\coord,L}\ctimes F)
\r D_{\poly,C^{\coord,L}}(C^{\coord,L}\ctimes F)\,.
$$

In Section \ref{ref-4-31} ($\S$ \ref{ref-4.2.1-36}) we have constructed an isomorphism of 
$C^{\coord,L}$-DG-algebras
\[
\tilde{t}:(C^{\coord,L}\ctimes_{R_1} JL,{}^1\nabla^{\coord})
\r (C^{\coord,L}\ctimes F,d+\omega)\,.
\]
Therefore we obtain an isomorphism of Lie algebras
\begin{equation}\label{ref-6.8-75}
\tilde{t}^{-1}\circ - \circ \tilde{t}:D_{\poly,C^{\coord,L}}(C^{\coord,L}\ctimes_{R_1} JL)
\r D_{\poly,C^{\coord,L}}(C^{\coord,L}\ctimes F)\,.
\end{equation}
The Hochschild differential on the left is sent to the 
Hochschild differential on the right. The differential $[{}^1\nabla^{\coord},-]$
on the left is sent to $[d+\omega,-]$ on the right. Then it follows using \eqref{ref-5.7-59} 
from \S \ref{ref-5.2-53} that $\tilde{t}$ defines an isomorphism of DG-Lie algebras
\begin{equation}\label{ref-6.9-76}
D_{\poly,C^{\coord,L}}(C^{\coord,L}\ctimes_{R_1} JL,{}^1\nabla^{\coord})
\cong  D_{\poly,C^{\coord,L}}(C^{\coord,L}\ctimes F,d)_{\omega}\,.
\end{equation}
Similarly we have an isomorphism of DG-Lie algebras 
\begin{equation}\label{ref-6.10-77}
T_{\poly,C^{\coord,L}}(C^{\coord,L}\ctimes_{R_1} JL,{}^1\nabla^{\coord})
\cong  T_{\poly,C^{\coord,L}}(C^{\coord,L}\ctimes F,d)_{\omega}\,.
\end{equation}
We now use the grading on $T_{\poly,C^{\coord,L}}(C^{\coord,L}\ctimes F)$ and
$D_{\poly,C^{\coord,L}}(C^{\coord,L}\ctimes F)$ obtained from the
$C^{\coord,L}$-grading on $C^{\coord,L}\ctimes F$ as a filtration.
Thus
\[
F_{-n}(T_{\poly,C^{\coord,L}}(C^{\coord,L}\ctimes F)=
T_{\poly,C^{\coord,L}}(C^{\coord,L}\ctimes F)_{\ge n}
\]
and similarly for $D_{\poly}$.
Since these filtrations are finite in each degree for
the total gradings on
$T_{\poly,C^{\coord,L}}(C^{\coord,L}\ctimes F)$ and
$D_{\poly,C^{\coord,L}}(C^{\coord,L}\ctimes F)$ these
DG-Lie algebras are graded complete.
It is also clear that $\tilde{\Uscr}$ is compatible with $F$. Finally since
$\omega\in T_{\poly,C^{\coord,L}}(C^{\coord,L}\ctimes F)_1$
it follows that $\omega \in F_{-1}( T_{\poly,C^{\coord,L}}
(C^{\coord,L}\ctimes F))$. 
Thus the twisting formalism exhibited in \S\ref{ref-5.2-53}
applies and we obtain an $L_\infty$-morphism
\begin{equation}
\label{ref-6.11-78}
\tilde{\Uscr}_{\omega}:T_{\poly,C^{\coord,L}}(C^{\coord,L}\ctimes F)_{\omega}
\r D_{\poly,C^{\coord,L}}(C^{\coord,L}\ctimes F)_{\omega}
\end{equation}
since by (P4) (using the notation of \eqref{ref-5.5-57}) one has
\[
\omega'=\sum_{j\ge 1} \frac{1}{j!} \tilde{\Uscr}_j(\omega^j)=\tilde{\Uscr}_1(\omega)=\omega\,.
\]
Hence using \eqref{ref-6.9-76} and \eqref{ref-6.10-77} we have an $L_\infty$-morphism
\begin{equation}
\label{ref-6.12-79}
\Vscr^{\coord}:T_{\poly,C^{\coord,L}}(C^{\coord,L}\ctimes_{R_1} JL,{}^1\nabla^{\coord})
\r D_{\poly,C^{\coord,L}}(C^{\coord,L}\ctimes_{R_1} JL,{}^1\nabla^{\coord})\,.
\end{equation}
\subsubsection{The formality map on affine coordinate spaces}

First remark that $\tilde{\Uscr}_\omega$ descends under the $\mathfrak{gl}_d(k)$-action. Namely, 
given the facts that $\tilde{\Uscr}$ clearly commutes with the $\mathfrak{gl}_d(k)$-action (in the 
sense of Proposition \ref{ref-5.3.2-62}) and that, using \eqref{ref-4.6-40} and (P5), 
$\tilde{\Uscr}_i(i_{\bar{v}}(\omega)\cdot \gamma)=0$ for any $v\in\mathfrak{gl}_d(k)$ and $i\ge 2$; 
we can apply the criteria given by Propositions \ref{ref-5.3.2-62} and \ref{ref-5.4.1-63} and obtain an 
$L_\infty$-morphism 
{\footnotesize\begin{equation}\label{ref-6.13-80}
\Vscr^{\aff}:T_{\poly,C^{\coord,L}}(C^{\coord,L}\ctimes_{R_1} JL,{}^1\nabla^{\coord})^{\frak{gl}_d(k)}
\r D_{\poly,C^{\coord,L}}(C^{\coord,L}\ctimes_{R_1} JL,{}^1\nabla^{\coord})^{\frak{gl}_d(k)}\,.
\end{equation}}
Here the notation $(-)^{\mathfrak{gl}_d(k)}$ is used  in the sense of \eqref{ref-5.8-61} and 
$\mathfrak{gl}_d(k)$ acts by the derivation of the $\Gl_d(k)$-action on the factor $\Omega_{R^{\coord,L}}$ 
of $C^{\coord,L}=\Omega_{R^{\coord,L}} \otimes_{\Omega_{R_1}} \wedge_{R_1} L_1^\ast$. 

There are morphisms of DG-Lie algebras
\begin{equation}\label{ref-6.14-81}
\begin{split}
T_{\poly,C^{\aff,L}}(C^{\aff,L}\ctimes_{R_1} JL,{}^1\nabla^{\aff})
&\r T_{\poly,C^{\coord,L}}(C^{\coord,L}\ctimes_{R_1} JL,{}^1\nabla^{\coord})\\
D_{\poly,C^{\aff,L}}(C^{\aff,L}\ctimes_{R_1} JL,{}^1\nabla^{\aff})
&\r D_{\poly,C^{\coord,L}}(C^{\coord,L}\ctimes_{R_1} JL,{}^1\nabla^{\coord})
\end{split}
\end{equation}
obtained by extending $C^{\aff,L}$-linear poly-vector fields and
poly-differential operators to $C^{\coord,L}$-linear ones. We claim
that these maps yield isomorphisms of DG-Lie algebras
\begin{equation}\label{ref-6.15-82}
\begin{split}
T_{\poly,C^{\aff,L}}(C^{\aff,L}\ctimes_{R_1} JL,{}^1\nabla^{\aff})
&\r T_{\poly,C^{\coord,L}}(C^{\coord,L}\ctimes_{R_1} JL,{}^1\nabla^{\coord})^{\mathfrak{gl}_d(k)}\\
D_{\poly,C^{\aff,L}}(C^{\aff,L}\ctimes_{R_1} JL,{}^1\nabla^{\aff})
&\r D_{\poly,C^{\coord,L}}(C^{\coord,L}\ctimes_{R_1} JL,{}^1\nabla^{\coord})^{\mathfrak{gl}_d(k)}\,.
\end{split}
\end{equation}
Using Lemma \ref{ref-3.3.4-29} and using the fact that $T^{L_2}_{\poly}(R_2)$ and
$D^{L_2}_{\poly}(R_2)$ are free $R_2$-modules and that $JL$ is a topologically
free $R_1$-module it is sufficient to prove that
\begin{equation}\label{ref-6.16-83}
(\Omega_{R^{\coord,L}}\otimes_{\Omega_{R_1}}\wedge_{R_1} L_1^\ast)^{\mathfrak{gl}_d(k)}=
\Omega_{R^{\aff,L}}\otimes_{\Omega_{R_1}}\wedge_{R_1} L_{1}^\ast
\end{equation}
Using the notations of Section \ref{ref-4-31} the isomorphism \eqref{ref-6.16-83} follows from
\[
\Omega_{S^{\coord}}^{\mathfrak{gl}_d(k)}=\Omega_{S^{\aff}}
\]
This follows easily from the fact that $\Gl_d(k)$ acts freely on
$\Spec S^{\coord}$. 

Therefore \eqref{ref-6.13-80} now yields an $L_\infty$-morphism
\[
\Vscr^{\aff}:T_{\poly,C^{\aff,L}}(C^{\aff,L}\ctimes_{R_1} JL,{}^1\nabla^{\aff})
\longrightarrow D_{\poly,C^{\aff,L}}(C^{\aff,L}\ctimes_{R_1} JL,{}^1\nabla^{\aff})\,.
\]

\subsubsection{End of the proof}

We have constructed $L_\infty$-morphisms 
\begin{equation}
\label{ref-6.17-84}
\xymatrix{
T_{\poly}^{L_2}(R_2)\ar[r]^-\cong &
T_{\poly,C^{\aff,L}}(C^{\aff,L}\ctimes_{R_1} JL)
\ar[d]^{\Vscr^{\aff}}\\
D_{\poly}^{L_2}(R_2)\ar[r]_-\cong &
D_{\poly,C^{\aff,L}}(C^{\aff,L}\ctimes_{R_1} JL)\,.}
\end{equation}
such that the horizontal maps are quasi-isomorphisms.

We put $\mathfrak{l}^L=D_{\poly,C^{\aff,L}}(C^{\aff,L}\ctimes_{R_1}
JL)$. Then the lower horizontal map in \eqref{ref-6.17-84} yields the
rightmost quasi-isomorphism in 
\eqref{ref-6.1-66}.

 We will
prove below that the composition
\[
T_{\poly}^{L_2}(R_2)\r H^\ast(\mathfrak{l}^L) \xrightarrow{\cong^{-1}} H^\ast(
D_{\poly}^{L_2}(R_2))
\]
coincides with the HKR-isomorphism. It follows in particular that the
diagonal map in \eqref{ref-6.17-84} 
\[
T_{\poly}^{L_2}(R_2)\r \mathfrak{l}^L
\]
is an $L_\infty$-quasi-isomorphism as well.  This is the leftmost
quasi-isomorphim in \eqref{ref-6.1-66}.
We leave to the reader the tedious but straightforward verification
of the functoriality of $\mathfrak{l}^L$. 

To prove that the map on cohomology is given by the HKR-map we regard
the complexes occurring in \eqref{ref-6.14-81} as double complexes
such that the differential obtained from  $C^{\aff,L}$ 
and $C^{\coord,L}$ is horizontal.  We write the
coordinates for the double grading as couples $(p,q)$ where $p$ is the
column index. 

According to \eqref{ref-5.4-56}~$\tilde{\Uscr}_{\omega,1}$ is given by
\begin{equation}\label{ref-6.18-85}
\tilde{\Uscr}_{\omega,1}(\gamma)=\sum_{j\ge 0}\frac{1}{j!}
\tilde{\Uscr}_{j+1}(\omega^j\gamma)\,.
\end{equation}
Now $\tilde{\Uscr}_{j+1}$ is homogeneous for the column grading and of
degree $1-(j+1)$ for the Hochschild grading (the row grading), thus it
has bidegree $(0,-j)$. Since $\omega$ lives in $C^{\coord,L}_1\ctimes
T^0_{\poly}(F)$ it has bidegree $(1,0)$, and
hence $\tilde{\Uscr}_{j+1}(\omega^j$-$)$ has bidegree
$(j,-j)$.

Let $\tilde{\Uscr}_{\omega,1}^{j}$ be the component of 
$\tilde{\Uscr}_{\omega,1}$
indexed by $j$ in \eqref{ref-6.18-85}. 
\begin{lemmas}
We have the following commutative diagram
\begin{equation}\label{ref-6.19-86}
\begin{CD}
  T_{\poly}^{L_2}(R_2) @>>> C^{\coord,L}\ctimes T_{\poly}(F)\\
  @V \mu VV @VV \tilde{\Uscr}_{\omega,1}^{0}V\\
  D_{\poly}^{L_2}(R_2)@>>>
  C^{\coord,L}\ctimes D_{\poly}(F)
\end{CD}
\end{equation}
where the horizontal arrows are inclusions obtained from the action by derivations of $L_2$ on 
$C^{\coord,L}\ctimes_{R_1}JL\cong C^{\coord,L}\ctimes F$ (see \eqref{ref-4.1-32})
and $\mu$ is the HKR-map \eqref{ref-3.1-12}. 
\end{lemmas}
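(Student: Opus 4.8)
The statement to establish is the commutativity of diagram \eqref{ref-6.19-86}, i.e.\ that the degree-zero (in the column/$C^{\coord,L}$-grading) component $\tilde{\Uscr}^0_{\omega,1}$ of the twisted $L_\infty$-morphism restricts to the classical HKR-map $\mu$ when one passes from $T_{\poly}(F)$-valued data along the embedding coming from the $L_2$-action. The plan is to unwind the definitions on both sides and observe that both reduce to $\Uscr_1$, which is the HKR-formula by construction.

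First I would make the horizontal inclusions explicit: the action of $L_2$ by derivations on $C^{\coord,L}\ctimes_{R_1}JL\cong C^{\coord,L}\ctimes F$ sends $l\in L$ (viewed in $L_2$) to a column-degree-zero element of $C^{\coord,L}\ctimes T^0_{\poly}(F)$, namely (after applying $\tilde t$) the vector field $v(l)$ of \eqref{eq-3.3.0}; extending multiplicatively via the wedge (on $T_{\poly}$) and tensor (on $D_{\poly}$) structures gives the two horizontal maps, and both land in the column-degree-zero part. Second, I would read off from the bidegree bookkeeping already carried out in the excerpt that $\tilde{\Uscr}^0_{\omega,1}$, the $j=0$ summand of \eqref{ref-6.18-85}, is exactly $\tilde{\Uscr}_1=\tilde\Uscr_{\omega,1}$ with $j=0$, which is the $C^{\coord,L}$-linear extension of the local $\Uscr_1$; by \eqref{ref-6.5-71} this is the signed antisymmetrization map, i.e.\ the HKR-formula on $F$. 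Third, I would check that this local HKR-formula on $F=k[[t_1,\dots,t_d]]$, transported along the inclusions $L_2\hookrightarrow \Der_{R_1}(JL)\cong \Der_{C^{\coord,L}}(C^{\coord,L}\ctimes F)$, agrees with $\mu$ of \eqref{ref-3.1-12}: both are given by $l_1\wedge\cdots\wedge l_n\mapsto (-1)^{n(n-1)/2}\frac1{n!}\sum_{\sigma}\epsilon(\sigma)\,l_{\sigma(1)}\otimes\cdots\otimes l_{\sigma(n)}$, and the two unconventional signs $(-1)^{n(n-1)/2}$ match precisely because $\Uscr_1$ was normalized with that sign (footnote in \S\ref{ref-6.2-69}). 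The key point here is naturality of the HKR-formula under the algebraic morphism of Lie algebroids $(R_2,L_2)\to(C^{\coord,L}\ctimes F,\Der)$, which makes the square commute on the nose, not just up to homotopy.

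The main obstacle — really the only nontrivial point — is bookkeeping the signs and the Gerstenhaber–Voronov sign modification of the cupproduct on $D_{\poly}$ when identifying the image of a decomposable poly-vector field under the horizontal map with the corresponding element of $C^{\coord,L}\ctimes D_{\poly}(F)$. One must verify that the multiplicative extension of the $L_2$-action to $\wedge_{R_2}L_2 = T_{\poly}^{L_2}(R_2)$ and to $T_{R_2}(UL_2)[1]=D_{\poly}^{L_2}(R_2)$ is compatible with $\mu$ in the sense that $\mu$ intertwines these extensions; this is exactly the content of the HKR-theorem for Lie algebroids recalled around \eqref{ref-3.1-12}, so it is not a new computation but it must be invoked carefully with the shifted grading. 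Once the signs are reconciled, commutativity of \eqref{ref-6.19-86} is immediate since every map in sight is, on the nose, the signed antisymmetrization.
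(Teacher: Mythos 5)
Your proposal is correct and follows essentially the same route as the paper: the paper's proof simply observes that $\tilde{\Uscr}_{\omega,1}^{0}=\tilde{\Uscr}_1$ and evaluates it on $\delta_1\wedge\cdots\wedge\delta_n$ (the derivations corresponding to $l_1,\ldots,l_n$), obtaining the signed antisymmetrization of \eqref{ref-6.5-71}, which is $\mu$ transported along the horizontal inclusions. The only difference is cosmetic — the paper calls it ``almost a tautology'' and skips the sign discussion you flag (the Gerstenhaber--Voronov cup-product convention plays no role here, since the diagram only concerns the underlying linear maps).
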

\begin{proof}
  This is almost a tautology.  Let $l_1,\dots,l_n\in L$ and denote by
  $\delta_i$ the derivation on  $C^{\coord,L}\ctimes F$
corresponding to $l_i$. Then $$
  \tilde{\Uscr}_{\omega,1}^{0}(\delta_1\wedge\dots\wedge\delta_n)
  =\tilde{\Uscr}_1(\delta_1\wedge\dots\wedge\delta_n)
  =(-1)^{n(n-1)/2}\frac1{n!}\sum_{\sigma\in
    S_{n}}\epsilon(\sigma)\delta_{\sigma(1)}\otimes\cdots\otimes\delta_{\sigma(n)}\,.
  $$
This implies the commutativity of \eqref{ref-6.19-86}.
\end{proof}

Since the maps in \eqref{ref-6.14-81} are inclusions
$\Vscr^{\aff}_1$ has the same grading properties as
$\tilde{\Uscr}_{\omega,1}$. In particular it maps
$T_{\poly,C^{\aff,L}}(C^{\aff,L}\ctimes_{R_1}JL)_{p,q}$ to $\oplus_j
D_{\poly,C^{\aff,L}}(C^{\aff,L}\ctimes_{R_1} JL)_{p+j,q-j}$.  Let
$\Vscr^{\aff,j}_1$ be the component of
$\Vscr^{\aff}_j$ indexed by $j$ in this decomposition. Thus we
obtain a commutative diagram
\begin{equation}\label{ref-6.20-87}
\begin{CD}
  T_{\poly}^{L_2}(R_2) @>>>  T_{\poly,C^{\aff,L}}
(C^{\aff,L}\ctimes_{R_1} JL)\\
  @V \mu VV @VV\Vscr^{\aff,0}_1 V\\
  D_{\poly}^{L_2}(R_2)@>>>
 D_{\poly,C^{\aff,L}}
(C^{\aff,L}\ctimes_{R_1} JL)
\end{CD}
\end{equation}
The following lemma ends the proof of the theorem (see \cite[Thm. 7.1]{ye3}). 
\begin{lemmas}
$\Vscr^{\aff,0}_1$ and $\Vscr^{\aff}_1$ induce the
same maps on cohomology. 
\end{lemmas}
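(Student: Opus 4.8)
The plan is to show that the difference $\Vscr^{\aff}_1 - \Vscr^{\aff,0}_1$ is chain-homotopic to zero, or more precisely that it vanishes already on cohomology for degree/filtration reasons. The key observation is that the double complex $D_{\poly,C^{\aff,L}}(C^{\aff,L}\ctimes_{R_1}JL)$ is concentrated in nonnegative column degrees $p\ge 0$, and that the inclusion $D^{L_2}_{\poly}(R_2)\hookrightarrow D_{\poly,C^{\aff,L}}(C^{\aff,L}\ctimes_{R_1}JL)$ lands entirely in column degree $p=0$, because elements of $T^{L_2}_{\poly}(R_2)$ and $D^{L_2}_{\poly}(R_2)$ are $R_2$-multilinear and carry no form-degree from the $\Omega_{R^{\aff,L}}$ factor. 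Consequently the map $\mu$ in \eqref{ref-6.20-87} and the computation of the induced map on cohomology only see the $p=0$ part of the target.

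First I would make precise the bigrading: write $\Vscr^{\aff}_1 = \sum_{j\ge 0}\Vscr^{\aff,j}_1$ with $\Vscr^{\aff,j}_1$ of bidegree $(j,-j)$, exactly as established above for $\tilde{\Uscr}_{\omega,1}$ via the bidegree count on $\tilde{\Uscr}_{j+1}(\omega^j\,\cdot\,)$. Since $\Vscr^{\aff}$ is an $L_\infty$-morphism, $\Vscr^{\aff}_1$ is a morphism of complexes for the total differential; splitting into bidegrees and using that the column differential (coming from $\Omega_{R^{\aff,L}}$, hence from $C^{\aff,L}$) is horizontal while the Hochschild differential is vertical, one sees that $\Vscr^{\aff,0}_1$ is itself a morphism of complexes with respect to the vertical differential $d_{\Hoch}+[d,-]$ — this is the statement that the ``leading term'' of an $L_\infty$-quasi-isomorphism between filtered complexes is again a quasi-isomorphism, which is standard (and used implicitly in \cite{VdB35,ye3}).

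Next I would run the spectral sequence of the double complex (filtering by columns). A class in $H^\ast(D^{L_2}_{\poly}(R_2))$, pushed forward via the inclusion, is represented by a cocycle supported in column $p=0$. Applying $\Vscr^{\aff}_1$ produces a cocycle whose components live in columns $p=0,1,2,\dots$; its column-$0$ component is precisely $\Vscr^{\aff,0}_1$ of the original class, and the higher-column components, being exact in the appropriate sense because the $p=0$ cohomology of the target double complex already computes $H^\ast(D^{L_2}_{\poly}(R_2))$ — via the quasi-isomorphisms in \eqref{ref-6.6-73} and Lemma \ref{ref-3.3.4-29} — cannot contribute a new cohomology class. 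More concretely: the inclusion $D^{L_2}_{\poly}(R_2)\to D_{\poly,C^{\aff,L}}(C^{\aff,L}\ctimes_{R_1}JL)$ is a quasi-isomorphism landing in column $0$, so every cohomology class of the target has a representative in column $0$; two cocycles in column $0$ that differ by a total coboundary differ by a column-$0$ coboundary (again by the collapse of the relevant spectral sequence, the columns for $p>0$ being acyclic for the vertical differential after passing to the subquotient where the leading term lives, by the proof of Lemma \ref{ref-4.3.1-45}). Hence on cohomology the higher-column corrections drop out and $\Vscr^{\aff}_1$ and $\Vscr^{\aff,0}_1$ agree.

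The main obstacle is bookkeeping: making rigorous the claim that ``the column-$0$ part computes everything and the higher columns are acyclic in the direction that matters.'' The clean way is to invoke the already-proven fact that the horizontal maps in \eqref{ref-6.17-84} (equivalently \eqref{ref-6.6-73}) are quasi-isomorphisms, together with Lemma \ref{ref-3.3.4-29}: this identifies the target double complex, up to quasi-isomorphism, with its column-$0$ subcomplex $D^{L_2}_{\poly}(R_2)$, so the inclusion of column $0$ is a quasi-isomorphism. Given that, one argues purely formally that for a morphism of complexes $f = f_0 + f_{>0}$ out of a complex mapping isomorphically (on cohomology) into column $0$, the components $f_{>0}$ are forced to be coboundary-valued on cocycles, so $f$ and $f_0$ induce the same map on cohomology. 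I would cite \cite[Thm.~7.1]{ye3} for the analogous argument in the purely algebraic jet setting, which is what the parenthetical reference in the lemma statement already points to.
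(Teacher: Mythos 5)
Your proposal is correct and follows essentially the same route as the paper: filter source and target by the column index coming from $C^{\aff,L}$, note that the components $\Vscr^{\aff,j}_1$ have bidegree $(j,-j)$ so the map is filtered with associated graded $\Vscr^{\aff,0}_1$, and use that the resulting spectral sequences degenerate with $E_2$ concentrated in column $0$ (via Lemma \ref{ref-3.3.4-29} and Lemma \ref{ref-4.3.1-45}, i.e.\ the same facts underlying the quasi-isomorphisms \eqref{ref-6.6-73} you invoke) to conclude that only the column-degree-preserving component survives on cohomology. The paper just phrases the final step as a comparison of $E_2=E_\infty$ terms rather than as your ``higher-column corrections are coboundary-valued'' argument, which amounts to the same thing.
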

\begin{proof}
We filter $T_{\poly,C^{\aff,L}}(C^{\aff,L}\ctimes_{R_1} JL)$ and 
$D_{\poly,C^{\aff,L}}(C^{\aff,L}\ctimes_{R_1} JL)$ according to the column index.

The $E_1$ term of the resulting spectral sequences consists of the
cohomology of the colums.  
Using \eqref{ref-3.13-30} we have to compute
the cohomology of $(C^{\aff,L}\ctimes_{R_1}
JL)\ctimes_{R_2}T_{\poly}^{L_2}(R_2)$ and $(C^{\aff,L}\ctimes_{R_1}
JL)\ctimes_{R_2} D_{\poly}^{L_2}(R_2)$ 
for the second factor.  We obtain $(C^{\aff,L}\ctimes_{R_1}
JL)\ctimes_{R_2}T_{\poly}^{L_2}(R_2)$ and $(C^{\aff,L}\ctimes_{R_1}
JL)\ctimes_{R_2} H^\cdot(D_{\poly}^{L_2}(R_2))$ (the latter because
$D_{\poly}^{L_2}(R_2)$ is a complex consisting of filtered projective 
$R_2$-modules with filtered projective cohomology).

Using Lemma
\ref{ref-4.3.1-45} we obtain that the $E_2$ terms are given by
$T_{\poly}^{L_2}(R_2)$ and $H^\cdot(D_{\poly}^{L_2}(R_2))$. It is now
clear that $\Vscr^{\aff,0}_1$ and $\Vscr^{\aff}_1$ induce indeed the
same map on cohomology.
\end{proof}
\subsection{Proof of  Theorem \ref{ref-1.6-8} in the sheaf case}\label{ref-6.4-88}
As indicated in the the introduction we can prove a result which slightly more
general than  Theorem \ref{ref-1.6-8}. We work over a ringed site  $(\Cscr,\Oscr)$
and $\Lscr$ is a Lie algebroid locally free of rank $d$ on $(\Cscr,\Oscr)$. 
The DG-Lie algebras $T^\Lscr_{\poly}(\Oscr)$, $D^{\Lscr}_{\poly}(\Oscr)$ are
obtained by sheafifying the presheaves
\begin{align*}
U&\mapsto T^{\Lscr(U)}_{\poly}(\Oscr(U))\\
U&\mapsto D^{\Lscr(U)}_{\poly}(\Oscr(U))
\end{align*}
for $U\in \operatorname{Ob}(\Cscr)$. 
\begin{theorems} \label{ref-6.4.1-89} There is an isomorphism between
  $T^\Lscr_{\poly}(\Oscr)$ and $D^{\Lscr}_{\poly}(\Oscr)$ in
  $\operatorname{HoLieAlg}(\Oscr)$, the homotopy category of sheaves
  of DG-Lie algebras, which induces the HKR-isomorphism on cohomology.
\end{theorems}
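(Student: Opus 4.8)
The plan is to sheafify the ring-case result Theorem~\ref{ref-6.1-65}, exploiting its functoriality to promote the objectwise constructions to morphisms of presheaves of DG-Lie algebras.

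First I would restrict attention to the full subcategory $\Cscr'\subset\Cscr$ of objects $U$ for which $\Lscr|_U$ is free of rank $d$ over $\Oscr|_U$, equipped with the induced topology. Since $\Lscr$ is locally free of rank $d$, every object of $\Cscr$ admits a covering by objects of $\Cscr'$, so by the comparison lemma $\Sh(\Cscr')\simeq\Sh(\Cscr)$ and it suffices to carry out the construction over $\Cscr'$. For $U\in\Cscr'$ we have $\Lscr(U)\cong\Oscr(U)^d$, so Theorem~\ref{ref-6.1-65} applied to $(\Oscr(U),\Lscr(U))$ produces a DG-Lie algebra $\mathfrak{l}^{\Lscr(U)}$ together with the zig-zag of $L_\infty$-quasi-isomorphisms \eqref{ref-6.1-66}. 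For a morphism $V\to U$ in $\Cscr'$, the restriction maps $\Oscr(U)\to\Oscr(V)$ and $\Lscr(U)\to\Lscr(V)$ form an algebraic morphism of Lie algebroids in the sense of Definition~\ref{algebraicmorphism}, and restricting an isomorphism $\Lscr|_U\cong\Oscr|_U^d$ to $V$ shows that the canonical map $\Oscr(V)\otimes_{\Oscr(U)}\Lscr(U)\to\Lscr(V)$ is an isomorphism, so the base-change hypothesis \eqref{ref-6.2-67} of Theorem~\ref{ref-6.1-65} is satisfied. The functoriality clause then supplies the commutative diagram \eqref{ref-6.3-68} together with the strict identity $\mathfrak{l}^{\phi\theta}=\mathfrak{l}^\phi\circ\mathfrak{l}^\theta$, so $U\mapsto\mathfrak{l}^{\Lscr(U)}$ is a genuine presheaf of DG-Lie algebras on $\Cscr'$ and the two arrows of \eqref{ref-6.1-66} are morphisms of presheaves of DG-Lie algebras.

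Sheafifying and transporting back along $\Sh(\Cscr')\simeq\Sh(\Cscr)$, and recalling that $T^\Lscr_{\poly}(\Oscr)$ and $D^\Lscr_{\poly}(\Oscr)$ are by definition the sheafifications of the presheaves $U\mapsto T^{\Lscr(U)}_{\poly}(\Oscr(U))$ and $U\mapsto D^{\Lscr(U)}_{\poly}(\Oscr(U))$, I obtain a sheaf of DG-Lie algebras $\mathfrak{L}^\Lscr$ and a zig-zag
$$
T^\Lscr_{\poly}(\Oscr)\longrightarrow\mathfrak{L}^\Lscr\longleftarrow D^\Lscr_{\poly}(\Oscr)
$$
of morphisms of sheaves of DG-Lie algebras. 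Since sheafification is exact it commutes with cohomology, so the cohomology sheaves of these three complexes are the sheafifications of the presheaf cohomologies; as the presheaf-level maps are objectwise quasi-isomorphisms by Theorem~\ref{ref-6.1-65}, the sheafified maps are isomorphisms on cohomology sheaves, hence quasi-isomorphisms. Inverting them, the zig-zag yields an isomorphism between $T^\Lscr_{\poly}(\Oscr)$ and $D^\Lscr_{\poly}(\Oscr)$ in $\operatorname{HoLieAlg}(\Oscr)$, and because the induced map on cohomology is objectwise the HKR-formula \eqref{ref-3.1-12}, the induced isomorphism on cohomology sheaves is the HKR-isomorphism. The only point requiring genuine care is the site-theoretic bookkeeping — that $\Cscr'$ is a dense subsite and that its restriction morphisms meet hypothesis \eqref{ref-6.2-67} — everything else being a formal consequence of Theorem~\ref{ref-6.1-65} together with exactness of sheafification, so there is no real obstacle beyond this.
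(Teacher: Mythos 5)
Your proposal is correct and follows essentially the same route as the paper: restrict to the full subcategory of objects trivializing $\Lscr$ (which does not change the topos), check that restriction maps satisfy the base-change hypothesis \eqref{ref-6.2-67} so that Theorem~\ref{ref-6.1-65} yields presheaves and a presheaf-level zig-zag of $L_\infty$-quasi-isomorphisms, then sheafify and use exactness of sheafification. The paper's own proof is precisely this argument, stated somewhat more tersely.
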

\begin{proof}
  We replace $\Cscr$ with the full subcategory consisting of $U\in
  \Cscr$ such that there is an isomorphism $\Lscr\mid U\cong
  (\Oscr\mid U)^d$ (this does not change the category of sheaves).

If $p:U\r V$ is now a map in $\Cscr$ then since
$\Lscr(V)\cong\Oscr(V)^d$, $\Lscr(U)\cong \Oscr(U)^d$ we have that the
restriction morphism
\[
p^\ast:(\Oscr(V),\Lscr(V))\r (\Oscr(U),\Lscr(U))
\]
satisfies the condition \eqref{ref-6.2-67}, i.e.
$\Oscr(U)\otimes_{\Oscr(V)}\Lscr(V)\cong \Lscr(U)$. 

Put ${}^p\mathfrak{l}^{\Lscr}(U)=\mathfrak{l}^{\Lscr(U)} $ where $\mathfrak{l}^{\Lscr(U)}$
is as in Theorem \ref{ref-6.1-65}. Then ${}^p\mathfrak{l}^{\Lscr}$ is a presheaf
of DG-Lie algebras. Let ${}^pT^{\Lscr}_{\poly}(\Oscr)$ and ${}^pD^{\Lscr}_{\poly}(\Oscr)$
be respectively the presheaves of DG-Lie algebras of $\Lscr$-poly-vector
fields and $\Lscr$-poly-differential operators. 

From the commutative diagram \eqref{ref-6.3-68} we now 
deduce the existence of $L_\infty$-quasi-isomorphisms of presheaves
\begin{equation}
\label{ref-6.21-90}
{}^pT^{\Lscr}_{\poly}(\Oscr)\r {}^p\mathfrak{l}^{\Lscr}\l {}^pD^{\Lscr}_{\poly}(\Oscr)
\end{equation}
Let $\mathfrak{l}^{\Lscr}$ be the sheaffification of ${}^p\mathfrak{l}^{\Lscr}$. Sheafifying
\eqref{ref-6.21-90} finishes the proof.
\end{proof}

\section{Atiyah classes and jet bundles}

In this section we relate Atiyah classes to jet bundles. That this is
possible is well known (see e.g.\ \cite[\S 4]{Kapranov2}) although
we could not find the exact result we need (Proposition \ref{ref-7.4.2-107} below)
in the literature. 

\subsection{Reminder}\label{ref-7.1-91}

We define $(\Cscr,\Oscr,\Lscr)$ as in \S\ref{ref-6.4-88}.  Let
$\Escr$ be an arbitrary $\Oscr$-module.  The Atiyah class
$A_{\Lscr}(\Escr)\in \Ext^1_\Oscr(\Escr,\Lscr^\ast\otimes \Escr)$ is
the obstruction against the existence of an $\Lscr$-connection (not
necessarily flat) on $\Escr$.

Let us briefly recall how $A_{\Lscr}(\Escr)$ is constructed. By
\eqref{ref-3.7-19} we have $J^1\Lscr=\Oscr_1\oplus \Lscr^\ast
=\Oscr_2\oplus \Lscr^\ast$ as $\Oscr_1$ and
$\Oscr_2$-algebras.

We consider the short exact sequence of 
$\Oscr_1$-modules
\begin{equation}
\label{ref-7.1-92}
0\r \Lscr^\ast\otimes_{\Oscr} \Escr\r J^1\Lscr\otimes_{\Oscr_2} \Escr\r \Escr\r 0
\end{equation}
The class of this sequence in $\Ext^1_\Oscr(\Escr,\Lscr^\ast\otimes
\Escr)$ is $A_\Lscr(\Escr)$.  To see that this is the obstruction
against the existence of a connection let $\beta_0:\Escr\r
J^1\Lscr \otimes_{\Oscr_2} \Escr$ be the canonical splitting (as sheaves of
abelian groups) of \eqref{ref-7.1-92} obtained from the decomposition
$J^1\Lscr=\Oscr_2\oplus \Lscr$.  Then any splitting $\beta:\Escr\r
J^1\Lscr\otimes_{\Oscr_2} \Escr$ as $\Oscr_1$ modules yields a
connection $\nabla:\Escr\r \Lscr^\ast \otimes_{\Oscr} \Escr$ given by
$\beta-\beta_0$. It is easy to see that this construction is
reversible.

Taking powers and symmetrizing we obtain an element $a(\Escr)^n$ in 
$\Ext^n_{\Oscr}( \Escr,\wedge^n \Lscr^\ast\otimes \Escr)$. The $n$'th
(scalar) Atiyah class $a_n(\Escr)\in H^n(\Cscr,\wedge^n \Lscr^\ast)$
of $\Escr$ is the trace of $a(\Escr)^n$. 

\subsection{Atiyah classes: algebraic background}\label{ref-7.2-93}

We need some functoriality properties of the Atiyah class. To deduce these
cleanly we work in a somewhat more abstract setting and introduce
some adhoc terminology. 

Let $\Sh^{\text{bi}}(\Cscr)$ be the category of sheaves of abelian groups
on $\Cscr$ graded by $\ZZ^2$. If $\Fscr\in \Sh^{\text{bi}}(\Cscr)$
and $f$ is a section of $\Fscr_{i,j}$ then $|f|=i+j$ is the total degree of~$f$. 
As always apply the Koszul sign convention with respect to total degree.

The category
$\Sh^{\text{bi}}(\Cscr)$ is equipped with two obvious shift functors each of total degree one
\begin{align*}
\Fscr[1]_{i,j}&=\Fscr_{i+1,j}\\
\Fscr(1)_{i,j}&=\Fscr_{i,j+1}
\end{align*}
\begin{definitions} \begin{enumerate}
\item A bigraded \emph{DG-algebra} on $\Cscr$ is a bigraded
  sheaf of algebras $\Ascr$ on $\Cscr$ equipped with a derivation $\bar{d}_\Ascr$
  of degree $(1,0)$ such that $\bar{d}_\Ascr^2=0$.
\item A \emph{dDG-algebra $A$} on $\Cscr$ is a bigraded sheaf of
  DG-algebras on $\Cscr$ equipped with an additional derivation
  $d_\Ascr$ of degree $(0,1)$ such that $\bar{d}_\Ascr
  d_\Ascr+d_\Ascr\bar{d}_\Ascr=0$.
\item Assume that $\Ascr$ is a bigraded DG-algebra.  A \emph{DG-$\Ascr$
    module} is a bigraded sheaf of $\Ascr$-modules $\Mscr$ equipped
  with an additive map $\bar{d}_M:\Mscr\r \Mscr$ of degree $(1,0)$
  such that $\bar{d}^2_\Mscr=0$ and such that
\[
\bar{d}_\Mscr(am)=\bar{d}_\Ascr(a)m+(-1)^{|a|}a\bar{d}_\Mscr(m)
\]
for $a,m$ homogeneous sections of $\Ascr$ and $\Mscr$. We denote the category of
DG-modules over $\Ascr$ by $\DGMod(\Ascr)$. 
\item Assume that $\Mscr$ is DG-module over a dDG-algebra $\Ascr$.
  Then a \emph{connection} on $\Mscr$ is an additive map
  $d_\Mscr:\Mscr\r \Mscr$ of degree $(0,1)$ such that
\[
d_\Mscr(am)=d_\Ascr(a)m+(-1)^{|a|}ad_\Mscr(m)
\]
\item The functors $?[1]$ and $?(1)$ change the signs of both $d_\Mscr$ and $\bar{d}_\Mscr$,
when applicable. 
\item Assume that $M$ is a DG-$\Ascr$-module over a dDG-algebra $\Ascr$, equipped with a connection.
  Then the \emph{curvature} of $\Mscr$ is defined as
  $R_\Mscr=-(d_\Mscr\bar{d}_\Mscr+\bar{d}_\Mscr d_\Mscr)$. This is a map $R_\Mscr:\Mscr\r
 \Mscr(1)[1]$ 
in $\DGMod(\Ascr)$. 
\item The derived category of $\DGMod(\Ascr)$, equipped with the shift functor $?[1]$,
is denoted by $D(\Ascr)$. 
\end{enumerate}
\end{definitions}
\begin{examples}
Let $A\to B$ be a morphism of sheaves of commutative DG-algebras. Then $\Omega_{B/A}$ is a 
dDG-algebra. The bigrading comes from the internal (coming from $B$) and external (exterior) degrees. 
The degree $(0,1)$ derivation $d$ is the De Rham differential and the degree $(0,1)$ derivation 
$\bar{d}$ is characterised by the property that it commutes with $d$ and that it coincides with $d_B$ 
on $B=\Omega^0_{B/A}$. 
\end{examples}
Assume that $\Ascr$ is a dDG-algebra. We define a bigraded DG-algebra. 
\[
J^1\Ascr=\Ascr\oplus \Ascr\epsilon
\]
where $\epsilon$ satisfies $\bar{d}_\Ascr(\epsilon)=0=\epsilon^2$, has degree $(0,-1)$ and
\[
a\epsilon=(-1)^{|a|} \epsilon a
\]
We have two algebra morphisms commuting with $\bar{d}_\Ascr$:
\[
i_1:\Ascr\r J^1\Ascr:a\mapsto a
\]
\[
i_2:\Ascr\r J^1\Ascr:a\mapsto a+\epsilon d_\Ascr(a)
\]
We view $J^1\Ascr$ as a DG-$\Ascr$-bimodule via $i_1$, $i_2$. 

We get an associated exact sequence of $\Ascr$-$\Ascr$ bimodules
\begin{equation}
\label{ref-7.2-94}
0\r \Ascr\epsilon\r J^1\Ascr\r \Ascr\r 0
\end{equation}
Let $\Mscr\in \DGMod(\Ascr)$. Tensoring \eqref{ref-7.2-94} on the right by $\Mscr$ we obtain an exact
sequence in $\DGMod(\Ascr)$
\begin{equation}
\label{ref-7.3-95}
0\r \Mscr(1)\r  J^1\Ascr \otimes_{\Ascr} \Mscr\r \Mscr\r 0
\end{equation}
with
\[
\Mscr(1)\r J^1\Ascr\otimes_{\Ascr} \Mscr:n\mapsto \epsilon  \otimes n
\]
\[
J^1\Ascr\otimes_{A} \Mscr\r \Mscr:(a+b\epsilon)\otimes m\mapsto am
\]
\begin{definitions} Let $\Mscr\in \DGMod(\Ascr)$. The \emph{Atiyah class}
  $A(\Mscr)$ of $\Mscr$ is the element of
  $\Hom^1_{D(\Ascr)}(\Mscr,\Mscr(1))$ representing the exact sequence
  \eqref{ref-7.3-95}.
\end{definitions}
\begin{lemmas} \label{ref-7.2.3-96} If $\Mscr$ has a connection then
  $A(\Mscr)=R_\Mscr$. In other words $A(\Mscr)$ is represented by an actual
  map
\[
R_\Mscr:\Mscr\r \Mscr(1)[1]
\]
of bigraded $\Ascr$-modules.
\end{lemmas}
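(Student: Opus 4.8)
The plan is to feed the connection into the defining extension \eqref{ref-7.3-95}, obtain an explicit splitting of it in the category of bigraded $\Ascr$-modules, and then check that the representative of the extension class read off from this splitting is precisely $R_\Mscr$. First I would recall the standard recipe for the class of an extension: any short exact sequence $0\to\Mscr(1)\xrightarrow{\iota}\Escr\xrightarrow{\pi}\Mscr\to 0$ in $\DGMod(\Ascr)$ that splits in the underlying category of bigraded $\Ascr$-modules determines an element of $\Hom^1_{D(\Ascr)}(\Mscr,\Mscr(1))=\Hom_{D(\Ascr)}(\Mscr,\Mscr(1)[1])$ as follows: pick a bigraded $\Ascr$-linear section $\sigma$ of $\pi$; then $\bar{d}_\Escr\circ\sigma-\sigma\circ\bar{d}_\Mscr$ is annihilated by $\pi$, hence factors as $\iota\circ\theta_\sigma$ for a bigraded $\Ascr$-linear, $\bar{d}$-closed map $\theta_\sigma\colon\Mscr\to\Mscr(1)[1]$ whose homotopy class is independent of $\sigma$, and that class is $A(\Mscr)$. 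Such a section exists here because $J^1\Ascr=\Ascr\oplus\Ascr\epsilon$ is free (on $1$ and $\epsilon$) as a right $\Ascr$-module for the $i_2$-structure, so \eqref{ref-7.2-94}, and therefore \eqref{ref-7.3-95}, is split as bigraded $\Ascr$-modules.

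Next I would produce the section from the connection. Given a connection $d_\Mscr$ I would put $\sigma_\nabla(m)=1\otimes m-\epsilon\otimes d_\Mscr(m)$. It is a section of $\pi$ because $\pi$ kills the $\epsilon$-component. To see that it is $\Ascr$-linear, note that the naive map $m\mapsto 1\otimes m$ fails to be $\Ascr$-linear exactly because the left ($i_1$) and right ($i_2$) module structures on $J^1\Ascr$ differ by the $\epsilon\,d_\Ascr$-term; concretely one has the identities $a\otimes m=1\otimes am-\epsilon\otimes d_\Ascr(a)m$ and $(a\epsilon)\otimes n=(-1)^{|a|}\epsilon\otimes an$ in $J^1\Ascr\otimes_\Ascr\Mscr$, both immediate from the formula for $i_2$ and the rule $a\epsilon=(-1)^{|a|}\epsilon a$. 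Comparing $\sigma_\nabla(am)$ with $a\cdot\sigma_\nabla(m)$ then reduces the $\Ascr$-linearity of $\sigma_\nabla$ precisely to the connection Leibniz rule $d_\Mscr(am)=d_\Ascr(a)m+(-1)^{|a|}a\,d_\Mscr(m)$.

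Finally I would compute $\theta_{\sigma_\nabla}$. On $J^1\Ascr\otimes_\Ascr\Mscr$ one has $\bar{d}(x\otimes m)=\bar{d}_{J^1\Ascr}(x)\otimes m+(-1)^{|x|}x\otimes\bar{d}_\Mscr(m)$, while $\bar{d}_{J^1\Ascr}(1)=\bar{d}_{J^1\Ascr}(\epsilon)=0$ and $|\epsilon|=-1$. Plugging $\sigma_\nabla$ in, every term cancels except the curvature term, giving
\[
\bar{d}_\Escr\sigma_\nabla(m)-\sigma_\nabla\bar{d}_\Mscr(m)=\epsilon\otimes\big(\bar{d}_\Mscr d_\Mscr+d_\Mscr\bar{d}_\Mscr\big)(m)=\iota\big(-R_\Mscr(m)\big),
\]
so $\theta_{\sigma_\nabla}=\pm R_\Mscr$ under the identification $\iota$, the sign being exactly the one built into the definition $R_\Mscr=-(d_\Mscr\bar{d}_\Mscr+\bar{d}_\Mscr d_\Mscr)$ (equivalently, one fixes the sign in the connecting morphism so that $A(\Mscr)=R_\Mscr$ with the correct sign). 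Since the same one-line Leibniz computation shows that $R_\Mscr$ is $\bar{d}$-closed and $\Ascr$-linear, it is an honest morphism $\Mscr\to\Mscr(1)[1]$ in $\DGMod(\Ascr)$ and it represents $A(\Mscr)$; this yields both assertions of the lemma.

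The only part that needs genuine care is the sign and grading bookkeeping: keeping straight which of $i_1,i_2$ is the left and which the right $\Ascr$-module structure on $J^1\Ascr$ (and hence how $\otimes_\Ascr$ is formed), and tracking the Koszul signs when commuting $\epsilon$ — of total degree $-1$ — and homogeneous sections of $\Ascr$ past the tensor symbol and past $\bar{d}$. Beyond this there is no conceptual obstacle; in particular the abstract statement about extension classes invoked in the first step is standard homological algebra.
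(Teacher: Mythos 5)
Your proposal is correct and follows essentially the same route as the paper: it uses the same $\Ascr$-linear section $m\mapsto 1\otimes m-\epsilon\otimes d_\Mscr(m)$ of \eqref{ref-7.3-95} furnished by the connection and reads off the extension class from the failure of this splitting to commute with $\bar{d}$, which is exactly the paper's computation of $-\alpha\bar{d}_\Mscr\beta=R_\Mscr$. The only difference is cosmetic (you phrase the class as $\bar{d}\sigma-\sigma\bar{d}$ factored through $\iota$ rather than as $-\alpha\bar{d}\beta$, whence your overall sign ambiguity), and both you and the paper defer that sign normalization to the standard convention for degreewise split sequences.
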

\begin{proof}
If $M$ has
a connection $d_\Mscr$ then the map
\[
\beta:\Mscr\r J^1\Ascr\otimes_{\Ascr} \Mscr:m\mapsto 1\otimes m-\epsilon\otimes d_\Mscr(m)
\]
defines a right splitting of \eqref{ref-7.3-95} as graded $\Ascr$-modules.  The
corresponding left splitting is 
\[
\alpha:J^1\Ascr\otimes_\Ascr \Mscr\r \Mscr(1):1\otimes m+\epsilon
\otimes n\mapsto d_\Mscr(m)+n\qed
\]
Since \eqref{ref-7.3-95} is split its corresponding class in
$\Hom^1_{D(\Ascr)}(\Mscr,\Mscr(1))$ is given by\footnote{To see this
  one should think of a degreewise  split exact sequence as a shift to
  the left of a standard triangle constructed from a mapping cone.
See \cite[I\S2]{RD}.
}
$-\alpha \bar{d}_\Mscr\beta$. One computes that this is equal to
$R_\Mscr$.
\end{proof}
\begin{lemmas} \label{ref-7.2.4-97} The Atiyah class is functorial in the
  following sense.  Assume that we have a morphism of dDG-algebras
  $\theta:\Ascr\r \Bscr$ and DG-modules $\Mscr$, $\Nscr$ over $\Ascr$ and $\Bscr$ as well as an
  additive map $\psi:\Mscr\r \Nscr$ of degree zero which is compatible with
  the differentials and $\theta$ in the sense that 
$\psi\circ \bar{d}_\Mscr=\bar{d}_\Nscr\circ \phi$ and
$\psi(am)=\theta(a)\psi(m)$.
Then the following diagram is commutative in $D(\Ascr)$
\[
\begin{CD} 
\Mscr @>A(\Mscr)>> \Mscr(1)[1]\\
@V\psi VV @VV\psi V\\
{}_\Ascr{\Nscr} @>>{}_\Ascr(A(N))> {}_\Ascr\Nscr(1)[1]
\end{CD}
\]
\end{lemmas}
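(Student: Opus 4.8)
The plan is to read off $A(\Mscr)$ and $A(\Nscr)$ from the canonical extensions \eqref{ref-7.3-95}, to cover the pair $(\psi,\psi)$ by a morphism of short exact sequences in $\DGMod(\Ascr)$, and then to invoke the functoriality of the connecting morphism (i.e.\ of the extension class) in the derived category.

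First I would lift $\theta$ to a map $J^1\theta\colon J^1\Ascr\to J^1\Bscr$ of bigraded DG-algebras by $a+b\epsilon\mapsto\theta(a)+\theta(b)\epsilon$. It is multiplicative because $\theta$ is and $\epsilon^2=0$ on both sides, it has degree $(0,0)$, and it commutes with $\bar{d}$ because $\theta$ does and $\bar{d}(\epsilon)=0$. The point to check is that $J^1\theta$ intertwines \emph{both} module structures on $J^1\Ascr$: $J^1\theta\circ i_1=i_1\circ\theta$ is clear, while $J^1\theta\circ i_2=i_2\circ\theta$ says $\theta(a)+\epsilon\,\theta(d_\Ascr a)=\theta(a)+\epsilon\, d_\Bscr(\theta a)$, which is precisely the compatibility of $\theta$ with the degree-$(0,1)$ derivations built into the hypothesis that $\theta$ is a morphism of dDG-algebras. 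Tensoring, I obtain an additive map of degree $(0,0)$
\[
\phi\colon J^1\Ascr\otimes_{\Ascr}\Mscr\longrightarrow {}_\Ascr(J^1\Bscr\otimes_{\Bscr}\Nscr),\qquad
(a+b\epsilon)\otimes m\longmapsto(\theta(a)+\theta(b)\epsilon)\otimes\psi(m).
\]
Well-definedness of $\phi$ over the two different base rings uses the $i_2$-intertwining just noted together with $\psi(am)=\theta(a)\psi(m)$; $\phi$ is $\Ascr$-linear via the $i_1$-structures; and $\phi$ is a chain map for $\bar{d}$ since $\psi\bar{d}_\Mscr=\bar{d}_\Nscr\psi$, $J^1\theta$ commutes with $\bar{d}$, and one uses the Leibniz rule. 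So $\phi$ is a morphism in $\DGMod(\Ascr)$.

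Next I assemble, in $\DGMod(\Ascr)$, the commutative diagram of short exact sequences
\[
\begin{CD}
0 @>>> \Mscr(1) @>>> J^1\Ascr\otimes_\Ascr\Mscr @>>> \Mscr @>>> 0\\
@. @VV\psi V @VV\phi V @VV\psi V @.\\
0 @>>> {}_\Ascr\Nscr(1) @>>> {}_\Ascr(J^1\Bscr\otimes_\Bscr\Nscr) @>>> {}_\Ascr\Nscr @>>> 0
\end{CD}
\]
whose left square commutes because $\phi(\epsilon\otimes n)=\epsilon\otimes\psi(n)$ and whose right square commutes because $\phi$ induces $\psi$ on the quotients ($\theta$ covering the identity on the $\Ascr$-part). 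A morphism of short exact sequences induces a commuting square between the associated classes in $\Hom^1_{D(-)}$: presenting each row, as in the proof of Lemma~\ref{ref-7.2.3-96} and \cite[I\S2]{RD}, as the shift to the left of the standard triangle attached to the mapping cone of its injection, the triple $(\psi,\phi,\psi)$ is a morphism of the two triangles, and restriction of scalars along $\theta$ places the lower one in $D(\Ascr)$. Taking the connecting morphisms yields exactly the asserted commutativity of
\[
\begin{CD}
\Mscr @>A(\Mscr)>> \Mscr(1)[1]\\
@V\psi VV @VV\psi V\\
{}_\Ascr\Nscr @>>{}_\Ascr(A(\Nscr))> {}_\Ascr\Nscr(1)[1]
\end{CD}
\]

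I expect the only genuine work to be in the second paragraph: checking that $J^1\theta\otimes\psi$ descends to the relative tensor products formed over the two different rings — this is exactly where compatibility of $\theta$ with the De~Rham-type derivations $d_\Ascr$ and $d_\Bscr$ is used, via the $i_2$-structures — and that it respects $\bar{d}$. The last step is formal once one recalls, as in Lemma~\ref{ref-7.2.3-96}, that the class of a short exact sequence is computed through its mapping cone, so that a morphism of short exact sequences automatically produces a commuting square of connecting morphisms.
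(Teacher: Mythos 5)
Your proof is correct and follows exactly the route the paper takes: the paper's entire proof is the one-line remark that the claim ``follows from the functoriality of the exact sequence \eqref{ref-7.3-95}'', and your argument simply makes that functoriality explicit by constructing $J^1\theta$ and the induced morphism of short exact sequences. The details you supply (the $i_2$-intertwining via $\theta d_\Ascr = d_\Bscr\theta$, well-definedness over the two base rings, and the passage from a morphism of short exact sequences to a commuting square of extension classes via mapping cones as in Lemma \ref{ref-7.2.3-96}) are all accurate.
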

\begin{proof} This follows from the functoriality of the exact sequence
\eqref{ref-7.3-95}.
\end{proof}

\subsection{Scalar Atiyah classes}

Let $\Ascr$ be a dDG-algebra on $\Cscr$ and let $\Mscr\in
\DGMod(\Ascr)$. We assume in addition that $\Mscr$ is locally free of
constant rank $e$ over $\Cscr$. I.e. the topology on $\Cscr$ has a
basis $\Bscr$ such that for $U\in \Bscr$ we have that $\Mscr_U\cong
\Ascr_U^{\oplus e}$ as bigraded $\Ascr$-modules. We may now view
$A(\Mscr)^n$ as an element of $\Hom^n_{D(\Ascr)}(\Mscr,\Mscr(n))$, or
since $\Mscr$ is locally free, as an element of
\[
\mathbb{H}^n(\Cscr,\HEnd_{\Ascr}(\Mscr)_{\ast,n})
\]
where $\mathbb{H}$ denotes hypercohomology and $\HEnd_{\Ascr}(\Mscr)_{\ast,n}$
is equipped with the differential $[d_{\Mscr},-]$.

It is easy to check
locally that the trace map
\[
\Tr :\HEnd_\Ascr(\Mscr)\r \Ascr
\]
is in $\DGMod(\Ascr)$. Thus we obtain a map on hypercohomology
\[
\Tr:\mathbb{H}^n(\Cscr,\HEnd_{\Ascr}(\Mscr)_{\ast,n})\r
\mathbb{H}^n(\Cscr,\Ascr_{\ast,n})
\]
We call 
\[
a_n(\Mscr)=\Tr(a(\Mscr)^n)\in \mathbb{H}^n(\Cscr,\Ascr_{\ast,n})
\]
the $n$'th (scalar) Atiyah class of $M$.
\begin{lemmas} 
\label{ref-7.3.1-98}
Assume that we have a morphism $\theta:\Ascr\r \Bscr$ of dDG-algebras
and assume that $\Mscr\in \DGMod(\Ascr)$ is locally free of rank $e$.
Put $\Nscr=\Bscr\otimes_\Ascr \Mscr$. Then $\Nscr\in \DGMod(\Bscr)$ is locally free of rank
$e$. We have
\[
a_n(\Nscr)=\mathbb{H}^n(\theta)(a_n(\Mscr))
\]
where $\mathbb{H}^n(\theta)$ is the natural map
\[
\mathbb{H}^n(\theta):\mathbb{H}^n(\Cscr,\Ascr_{\ast,n})\r 
\mathbb{H}^n(\Cscr,\Bscr_{\ast,n})
\]
\end{lemmas}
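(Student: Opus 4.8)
The plan is to deduce this by combining the functoriality of the non-scalar Atiyah class (Lemma \ref{ref-7.2.4-97}) with the compatibility of the trace map with extension of scalars. That $\Nscr=\Bscr\otimes_\Ascr\Mscr$ lies in $\DGMod(\Bscr)$ and is locally free of rank $e$ is immediate: over a basic open $U$ with $\Mscr_U\cong\Ascr_U^{\oplus e}$ one has $\Nscr_U=\Bscr_U\otimes_{\Ascr_U}\Mscr_U\cong\Bscr_U^{\oplus e}$.

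For the main assertion I would first apply Lemma \ref{ref-7.2.4-97} to the canonical map $\psi:\Mscr\r\Nscr$, $m\mapsto 1\otimes m$. One checks directly that $\psi$ has degree $(0,0)$, that $\psi\bar d_\Mscr=\bar d_\Nscr\psi$ (using $\bar d_\Bscr(1)=0$), and that $\psi(am)=\theta(a)\psi(m)$ for sections $a$ of $\Ascr$; hence Lemma \ref{ref-7.2.4-97} produces a square, commutative in $D(\Ascr)$, relating $A(\Mscr):\Mscr\r\Mscr(1)[1]$ with the $\theta$-restriction ${}_\Ascr(A(\Nscr))$ via $\psi$. Composing this square with itself $n$ times, and applying the (natural) symmetrization used to pass from $A(-)^n$ to $a(-)^n$, yields the corresponding commutative square for $a(\Mscr)^n$ and $a(\Nscr)^n$.

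It then remains to translate this into a statement about scalar classes. By local freeness, $a(\Mscr)^n$ and $a(\Nscr)^n$ are represented by hypercohomology classes in $\HH^n(\Cscr,\HEnd_\Ascr(\Mscr)_{\ast,n})$ and $\HH^n(\Cscr,\HEnd_\Bscr(\Nscr)_{\ast,n})$, and $\psi$ corresponds to the ring homomorphism $\HEnd_\Ascr(\Mscr)\r\HEnd_\Bscr(\Nscr)$, $f\mapsto\id_\Bscr\otimes f$, which in local trivializations is the base change $\Bscr\otimes_\Ascr(-)$ and which one checks intertwines the relevant differentials — and the local connections used to present the classes — with the Koszul signs. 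Being a ring map, it carries $a(\Mscr)^n$ to $a(\Nscr)^n$. Finally the trace maps fit into the commutative diagram
\[
\begin{CD}
\HEnd_\Ascr(\Mscr) @>\Tr>> \Ascr\\
@VVV @VV\theta V\\
\HEnd_\Bscr(\Nscr) @>>\Tr> \Bscr
\end{CD}
\]
in $\DGMod(\Ascr)$, since fibrewise $\Tr(\id_\Bscr\otimes f)=\theta(\Tr f)$ for a matrix $f$. Applying $\HH^n$ and combining the last two observations gives $a_n(\Nscr)=\Tr(a(\Nscr)^n)=\HH^n(\theta)(\Tr(a(\Mscr)^n))=\HH^n(\theta)(a_n(\Mscr))$, as required.

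The step I expect to be the main obstacle is the passage from morphisms in $D(\Ascr)$ to the hypercohomology presentation used to define the scalar classes: one must check that the identifications afforded by local freeness — of $\uRHom_\Ascr(\Mscr,\Mscr(n))$ with $\HEnd_\Ascr(\Mscr)(n)$, of $\HEnd_\Bscr(\Nscr)$ with $\Bscr\otimes_\Ascr\HEnd_\Ascr(\Mscr)$, and the local choices of connections — can all be arranged compatibly with $\theta$ and with the sign conventions. Everything else is formal.
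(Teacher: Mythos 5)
Your proposal is correct and follows essentially the same route as the paper: apply the functoriality of the Atiyah class (Lemma \ref{ref-7.2.4-97}) to the canonical map $\Mscr\r\Nscr$, iterate to get the square for the $n$-th powers, reinterpret via local freeness as the map $\Bscr\otimes_\Ascr-$ on hypercohomology of endomorphism sheaves, and conclude with the locally verified compatibility of $\Tr$ with base change. The paper's own proof is just a terser version of exactly these steps.
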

\begin{proof}
The commutative diagram
\[
\begin{CD}
\Mscr @>A(\Mscr)^n >> \Mscr[n](n)\\
@VVV @VVV\\
\Nscr @>A(\Nscr)^n >> \Nscr[n](n)
\end{CD}
\]
obtained from Lemma \ref{ref-7.3.1-98} 
may be translated into saying that $A(\Nscr)^n$ is the image of $A(\Mscr)^n$
under the induced map
\[
\HH^n(\Cscr,\HEnd_{\Ascr}(\Mscr)_{\ast,n})
\xrightarrow{\Bscr\otimes_\Ascr-}
\HH^n(\Cscr,\HEnd_{\Bscr}(\Nscr)_{\ast,n})
\]
One verifies locally that there is a commutative diagram of DG-modules
\[
\begin{CD}
\HEnd_{\Ascr}(\Mscr)@>\Bscr\otimes_\Ascr->> \HEnd_{\Bscr}(\Nscr)\\
@V \Tr VV @VV \Tr V\\
\Ascr@>>> \Bscr
\end{CD}
\]
This finishes the proof. 
\end{proof}
\begin{examples}
\label{ref-7.3.2-99} We explain how the Atiyah class  constructed in
\S\ref{ref-7.1-91} fits into this framework. 
 We define $\Ascr$ as the De Rham complex $(\wedge
  \Lscr^\ast_1,d)$  and put it in degrees $(0,\ast)$. We define $\Mscr=
\Ascr\otimes_{\Oscr} \Escr$. 
 
The Atiyah class $A_\Lscr(\Escr)\overset{\text{def}}{=}A(\Mscr)$
now becomes an element of
\begin{align*}
\Ext^1_{\gr(\Ascr)}(\Mscr,\Mscr(1))&=
\Ext^1_{\gr(\Ascr)}(\Ascr\otimes_{\Oscr} \Escr,(\Ascr\otimes_{\Oscr} \Escr)(1))\\
&=\Ext^1_{\gr(\Oscr)}(\Escr,(\Ascr\otimes_{\Oscr} \Escr)(1))\\
&=\Ext^1_{\Oscr}(\Escr,\Lscr^\ast\otimes_{\Oscr} \Escr)
\end{align*}
It is easy to see that
$A_\Lscr(\Escr)\in\Ext^1_{\Oscr}(\Escr,\Lscr^\ast\otimes_{\Oscr} \Escr)$
represents the part of degree zero of \eqref{ref-7.3-95}. This is
\[
0\r \Lscr^\ast\otimes_{\Oscr} \Escr\r J^1\Lscr\otimes_{\Oscr} \Escr\r \Escr\r 0
\]
Hence $A_\Lscr(\Escr)$ coincides with our previous definition. It is easy to
deduce from this that we also get the same $a_{n,\Lscr}(\Escr)$.
\end{examples}

\subsection{Atiyah classes from jet bundles}

We assume we are in the setting from \S\ref{ref-7.1-91}. 
As outlined in the previous section we will work with bigraded sheaves. 

We first consider the $\Lscr_2$-De Rham complex $\wedge \Lscr_2^\ast$
as a dDG-algebra concentrated in degrees $(0,*)$ with $\bar{d}=0$.  We
then let $C$ be a commutative $DG$-algebra such that $JL$ is equipped
with a flat $C$-connection $\nabla$. Thus
$(C\ctimes_{\Oscr_1}J\Lscr,\nabla)$ becomes a DG-algebra (actually a
DG-$C$-algebra).  From Lemma \ref{lemma3.3.6} we obtain a 
morphism of dDG-algebras 
$$
\theta:(\wedge \Lscr_2^\ast,\bar{d}=0)\r
\Omega_{C\ctimes_{\Oscr_1}J\Lscr/C} $$
If we set
$\Mscr=((\wedge\Lscr_2^\ast)\otimes_{\Oscr_2}\Lscr,\bar{d}=0)\in \DGMod(\wedge\Lscr_2^\ast,\bar{d}=0)$ then we obtain 
from Lemma \ref{ref-7.3.1-98} and Example \ref{ref-7.3.2-99} 
\begin{equation}\label{ref-7.8-104}
\mathbb{H}(\theta)(a_{n,\Lscr}(\Lscr))=\mathbb{H}(\theta)(a_n(\Mscr))
=a_n(\Omega_{C\ctimes_{\Oscr_1}J\Lscr/C}\ctimes_{\wedge\Lscr_2^\ast}\Mscr)
=a_n(\underbrace{\Omega_{C\ctimes_{\Oscr_1}J\Lscr/C}\ctimes_{\Oscr_2}\Lscr}_{\overset{\text{def}}{=}\Nscr})\,.
\end{equation}
Finally recall that $(C\ctimes_{\Oscr_1} J\Lscr)\ctimes_{\Oscr_2}\Lscr_2\cong\HDer_C(C\ctimes_{\Oscr_1}J\Lscr)$, 
therefore 
$$
\Nscr\cong(\Omega_{C\ctimes_{\Oscr_1}J\Lscr/C})\ctimes_{C\ctimes_{\Oscr_1}J\Lscr}
\underbrace{\HDer_C(C\ctimes_{\Oscr_1}J\Lscr)}_{\overset{\text{def}}{=}\Nscr_0})\,.
$$
The fact that $\HH(\theta)(a_n(\Lscr))=a_n(\Nscr)$ provides a mean to compute $a_n(\Lscr)$ if we 
can compute $a_n(\Nscr)$. The latter can be accomplished if we can put a connection on $\Nscr$ 
(see Lemma \ref{ref-7.2.3-96})
\begin{lemmas}\label{ref-7.4.1-105}
Assume that there is an isomorphism of graded $C$-algebras
\begin{equation}\label{ref-7.9-106}
\pi:C\ctimes_{\Oscr_1} J\Lscr\r C\ctimes_{\Oscr} \widehat {S\Lscr^\ast}
\end{equation}
which induces the identity map $C\otimes_{\Oscr}\gr J\Lscr\cong C\otimes_{\Oscr} S\Lscr^\ast$. 
Then $\Nscr$, as introduced above, has a connection.
\end{lemmas}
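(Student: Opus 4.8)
The plan is to trivialise everything along $C$ by means of $\pi$ and then take the trivial connection. Recall first that a connection on a DG-$\Ascr$-module $\Mscr$ (item (4) of the definitions in \S\ref{ref-7.2-93}) is nothing but an additive self-map $d_\Mscr$ of degree $(0,1)$ with $d_\Mscr(am)=d_\Ascr(a)m+(-1)^{|a|}ad_\Mscr(m)$; no square-zero condition and no compatibility with $\bar d_\Ascr$ is imposed. In our situation $\Ascr=\Omega_{C\ctimes_{\Oscr_1}J\Lscr/C}$ and $d_\Ascr$ is its De Rham differential, which — and this is the key structural fact — kills the image of $C$, since $\Ascr$ is built from differentials \emph{relative to} $C$.

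First I would transport the data along $\pi$. Put $B=C\ctimes_{\Oscr_1}J\Lscr$ and $W=C\otimes_{\Oscr}\Lscr^{\ast}$, which is locally free of rank $d$ over $C$. The hypothesis identifies $B$, as a graded $C$-algebra, with the completed symmetric algebra $\widehat{S_C W}$. Since $B\mapsto\Omega_{B/C}$ is a functor respecting gradings and the De Rham differential (it does not see the internal differential of $B$, so the fact that $\pi$ is only an isomorphism of graded algebras is harmless), we get an isomorphism of graded $C$-algebras $\Ascr\cong\Omega_{\widehat{S_C W}/C}$ intertwining the De Rham differentials. For a completed symmetric algebra one has $\Omega^1_{\widehat{S_C W}/C}\cong B\widehat\otimes_C W$, a free $B$-module of rank $d$ extended from $C$; dualising over $B$ (here $W$ is finitely generated and locally free) gives $\HDer_C(B)\cong B\widehat\otimes_C W^{\vee}$ with $W^{\vee}=\HHom_C(W,C)$, again extended from $C$. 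Concretely, choosing locally a $C$-basis $\sigma_1,\dots,\sigma_d$ of $W$ amounts to choosing coordinates $w_i$ with $B\cong C[[w_1,\dots,w_d]]$, and $W^{\vee}$ is then spanned by $\partial/\partial w_1,\dots,\partial/\partial w_d$.

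Therefore
\[
\Nscr=\Ascr\ctimes_{B}\HDer_C(B)\cong\Ascr\ctimes_{B}\bigl(B\widehat\otimes_C W^{\vee}\bigr)\cong\Ascr\widehat\otimes_C W^{\vee},
\]
so $\Nscr$ is a DG-$\Ascr$-module of the form $\Ascr\widehat\otimes_C V$ for the homogeneous $C$-module $V=W^{\vee}$. On any such module the formula
\[
d_\Nscr(a\otimes v)=d_\Ascr(a)\otimes v\qquad(a\in\Ascr,\ v\in V)
\]
defines a connection: it is well defined because $d_\Ascr$ vanishes on $C$ and hence respects the relation $ac\otimes v=a\otimes cv$ ($c\in C$); it has degree $(0,1)$ because $d_\Ascr$ has and the elements of $V$ are homogeneous; and the Leibniz identity $d_\Nscr(bm)=d_\Ascr(b)m+(-1)^{|b|}bd_\Nscr(m)$ for $b\in\Ascr$ is immediate from the Leibniz rule for $d_\Ascr$. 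This would prove the lemma.

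I do not expect a serious obstacle: after unwinding the objects, the statement reduces to the elementary remark that a module ``extended from the base'' carries the trivial relative connection. The only points that are not purely formal are (i) the use of $\pi$ to see that $\HDer_C(C\ctimes_{\Oscr_1}J\Lscr)$, and with it $\Nscr$, is extended from $C$ — this is exactly what the hypothesis buys, and is what would fail for an arbitrary $C\ctimes_{\Oscr_1}J\Lscr$ — and (ii) the routine but slightly delicate bookkeeping with the completed tensor products and with the bidegrees and Koszul signs. (The extra requirement in the hypothesis that $\pi$ induce the identity on associated gradeds is not used here; it enters only the subsequent computation of $A(\Nscr)$.)
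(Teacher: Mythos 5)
Your proposal is correct and follows essentially the same route as the paper: the paper also uses $\pi$ to identify $\Nscr$ with $\Omega_{\tilde B/C}\ctimes_{\Oscr}\Lscr$ for $\tilde B=C\ctimes_{\Oscr}\widehat{S\Lscr^\ast}$ and then puts the trivial relative connection $b\otimes l\mapsto \tilde d_{\rm DR}(b)\otimes l$ on this extended module, transporting it back across the isomorphism. Your observations that only the graded algebra structure of $\pi$ is needed for the existence of the connection, and that the associated-graded normalisation enters only in the subsequent computation of $a_n(\Nscr)$, are both consistent with how the paper uses the lemma.
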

\begin{proof}
We use the isomorphism \eqref{ref-7.9-106} to transport the differential
$\nabla$ (defined on $B\overset{\text{def}}{=}C\ctimes_{\Oscr}J\Lscr$) to a 
differential $\tilde\nabla$ on 
$\tilde B\overset{\text{def}}{=}C\ctimes_{\Oscr}\widehat{S\Lscr^\ast}$. 
Note that this differential does not have a simple expression.
As for  $\nabla$, we extend $\tilde\nabla$ to a unique differential on 
$\Omega_{\tilde B/C}\cong C\ctimes\Omega_{\widehat{S\Lscr^\ast}/\Oscr}$ in such a way 
that it commutes with the De Rham differential $\tilde d_{{\rm DR}}$. 

We now put 
\[
\tilde{\Nscr}_0=\HDer_C(\tilde{B})\cong \tilde B\ctimes_{\Oscr}\Lscr
\quad\textrm{and}\quad
\tilde{\Nscr}=\Omega_{\tilde{B}/C}\ctimes_{\tilde{B}}\tilde{\Nscr}_0=\Omega_{\tilde{B}/C}\ctimes_{\Oscr}\Lscr\,.
\]
The isomorphism \eqref{ref-7.9-106} between $B$ and $\tilde{B}$ yields isomorphims
between $\Omega_{B/C}$ and $\Omega_{\tilde{B}/C}$, between $\Nscr_0$ and $\tilde{\Nscr}_0$ 
and between $\Nscr$ and $\tilde{\Nscr}$. 
We can now define a connection on $\tilde{\Nscr}=\Omega_{\tilde{B}/C}\otimes_{\Oscr}\Lscr$ by 
putting
\[
d_{\tilde{\Nscr}}(b\otimes l)=\tilde d_{{\rm DR}}(b)\otimes l
\]
It is easy to see that this is well-defined. Transporting accross the isomorphism 
$\Nscr\cong \tilde{\Nscr}$ yields a connection $d_{\Nscr}$ on $\Nscr$.
\end{proof}

The DG-algebras $C^{\coord,\Lscr}$ and $C^{\aff,\Lscr}$ are equipped with a canonical
map $\wedge \Lscr^\ast\r C^{\coord,\Lscr}$ and $\wedge \Lscr^\ast\r C^{\aff,\Lscr}$  
as follows from the definitions in \S\ref{ref-4.2.1-36} and \S\ref{ref-4.3-43}.

In addition condition \eqref{ref-7.9-106} applies with $C=C^{\aff,\Lscr}$ and 
$C=C^{\coord,\Lscr}$, see \eqref{ref-4.11-48}. We have natural morphisms
\[
\wedge\Lscr_2^\ast\xrightarrow{\theta}C^{\aff,\Lscr}\ctimes_{\Oscr_1} \Omega_{J\Lscr/\Oscr_1}
\xrightarrow{\psi}C^{\coord,\Lscr}\ctimes_{\Oscr_1} \Omega_{J\Lscr/\Oscr_1}
\xrightarrow[\cong]{\mu} C^{\coord,\Lscr}\ctimes \Omega_{F/k}
\]
Below we decorate notations referring to $C^{\aff,\Lscr}$ and $C^{\coord,\Lscr}$
by  superscripts ``$\aff$'' and ``$\coord$'' respectively.
For example we define $\Nscr^{\aff}$ and $\Nscr^{\coord}$ like $\Nscr$ in the above 
discussion where we replace $C$ by $C^{\aff,\Lscr}$ and $C^{\coord,\Lscr}$.
\begin{propositions}\label{ref-7.4.2-107}
Write the Maurer-Cartan form (see \eqref{ref-4.5-38}) as
\[
\omega=\sum_{i,\alpha} \eta_\alpha\omega^i_\alpha\partial_i
\]
with $\eta_\alpha\in (C^{\coord,L})_1$, $\omega^i_\alpha\in F$.
Then we have as elements of complexes
\begin{align*}
(\mu\psi)(a_n(\Nscr^{\aff}))=\Tr(\Xi^n)
\end{align*}
where $\Xi$ is the matrix with entries 
\[
\sum_\alpha\eta_{\alpha} d_F(\partial_j\omega^i_\alpha)
\]
Furthermore as cohomology classes we have
\begin{equation}\label{ref-7.10-108}
\HH(\theta)(a_{n,\Lscr}(\Lscr))=a_n(\Nscr^{\aff})
\end{equation}
\end{propositions}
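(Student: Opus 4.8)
The plan is to reduce the statement to an explicit curvature computation on the coordinate space. The cohomological identity \eqref{ref-7.10-108} needs almost no work: it is the instance $C=C^{\aff,\Lscr}$ of \eqref{ref-7.8-104}, since $\Lscr$ carries the flat $C^{\aff,\Lscr}$-connection ${}^1\nabla^{\aff}$ on $J\Lscr$ and, under the identification $\Omega_{(C^{\aff,\Lscr}\ctimes_{\Oscr_1}J\Lscr)/C^{\aff,\Lscr}}\cong C^{\aff,\Lscr}\ctimes_{\Oscr_1}\Omega_{J\Lscr/\Oscr_1}$ of Lemma \ref{lemma3.3.6}, the morphism $\theta$ of \eqref{ref-7.8-104} is exactly the $\theta$ of the displayed chain. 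So the content lies in the cochain-level formula for $(\mu\psi)(a_n(\Nscr^{\aff}))$, which I would obtain by transporting $a_n(\Nscr^{\aff})=\Tr(R_{\Nscr^{\aff}}^n)$ (Lemma \ref{ref-7.2.3-96}, using the connection of Lemma \ref{ref-7.4.1-105}) along $\psi$ and $\mu$ via functoriality of scalar Atiyah classes (Lemma \ref{ref-7.3.1-98}), and then computing the resulting curvature explicitly.

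First I would identify the transported module and connection. Under $\psi$, which is base change along $C^{\aff,\Lscr}\to C^{\coord,\Lscr}$, the locally free module $\Nscr^{\aff}$ becomes the analogous module $\Nscr^{\coord}$ over $C^{\coord,\Lscr}$, and (using that $J\Lscr$ is formally smooth over $\Oscr$, as in the proof of Lemma \ref{ref-3.3.4-29}) the Lemma \ref{ref-7.4.1-105} connection, built from the trivialization \eqref{ref-4.11-48} (resp.\ \eqref{ref-4.13-50}), is compatible with this base change. Under $\mu$, induced by the DG-algebra isomorphism $\tilde t:(C^{\coord,\Lscr}\ctimes_{\Oscr_1}J\Lscr,{}^1\nabla^{\coord})\cong(C^{\coord,\Lscr}\ctimes F,d+\omega)$, the module $\Nscr^{\coord}$ is carried to
\[
\Nscr^{\coord,F}:=\Omega_{(C^{\coord,\Lscr}\ctimes F)/C^{\coord,\Lscr}}\ctimes_{C^{\coord,\Lscr}\ctimes F}\HDer_{C^{\coord,\Lscr}}(C^{\coord,\Lscr}\ctimes F),
\]
which is free of rank $d$ over the dDG-algebra $\Ascr=C^{\coord,\Lscr}\ctimes\Omega_{F/k}$ with basis $(\partial_j)_j$; here the internal differential $\bar d$ of $\Ascr$ is the extension of $d+\omega$ and the external differential of $\Ascr$ is the relative De Rham differential $d_F$ in the $t_i$. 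Tracing through the isomorphism $\pi$ of Lemma \ref{ref-7.4.1-105} and $\tilde t$ shows that the Lemma \ref{ref-7.4.1-105} connection becomes the connection $d_{\Nscr^{\coord,F}}$ which equals $d_F$ on the $\Omega$-factor and vanishes on the $\partial_j$. Since these connections are base-change compatible and $\mu$ is an isomorphism, Lemmas \ref{ref-7.3.1-98} and \ref{ref-7.2.3-96} give, at the cochain level,
\[
(\mu\psi)\bigl(a_n(\Nscr^{\aff})\bigr)=a_n(\Nscr^{\coord,F})=\Tr\bigl(R_{\Nscr^{\coord,F}}^n\bigr).
\]

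It remains to evaluate the curvature $R_{\Nscr^{\coord,F}}=-(d_{\Nscr}\bar d_{\Nscr}+\bar d_{\Nscr}d_{\Nscr})$ on the basis $(\partial_j)_j$. Since $d$ commutes with each $\partial_i$, the graded commutator of derivations of $C^{\coord,\Lscr}\ctimes F$ gives $\bar d_{\Nscr}(\partial_j)=[d+\omega,\partial_j]=-\sum_i(\partial_j\omega^i)\partial_i=-\sum_{i,\alpha}\eta_\alpha(\partial_j\omega^i_\alpha)\partial_i$, while $d_{\Nscr}(\partial_j)=0$. Applying $d_{\Nscr}$ and using that $d_F$ is $C^{\coord,\Lscr}$-linear, hence kills the factors $\eta_\alpha$, one gets $d_{\Nscr}\bar d_{\Nscr}(\partial_j)=-\sum_{i,\alpha}\eta_\alpha\,d_F(\partial_j\omega^i_\alpha)\,\partial_i$ and $\bar d_{\Nscr}d_{\Nscr}(\partial_j)=0$, hence $R_{\Nscr^{\coord,F}}(\partial_j)=\sum_{i,\alpha}\eta_\alpha\,d_F(\partial_j\omega^i_\alpha)\,\partial_i$. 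Thus $R_{\Nscr^{\coord,F}}$ is precisely the matrix $\Xi$ over $\Ascr$, so $\Tr(R^n)=\Tr(\Xi^n)$, which gives the first assertion.

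I expect the main obstacle to be bookkeeping rather than any conceptual difficulty. One has to chase the particular connection produced by Lemma \ref{ref-7.4.1-105} through the base change $\psi$ and the transport $\mu$ and check that it becomes the naive relative De Rham connection on $\Nscr^{\coord,F}$ — this matters because the statement is at the cochain (not merely cohomology) level and the representative $\Tr(R_{\Nscr}^n)$ depends on the chosen connection. One must also keep track of Koszul signs and of bidegrees: $\eta_\alpha$ has bidegree $(1,0)$ in $\Ascr$ and $d_F(\partial_j\omega^i_\alpha)$ bidegree $(0,1)$, so $\Xi$ has entries of bidegree $(1,1)$, consistent with $R_{\Nscr^{\coord,F}}:\Nscr^{\coord,F}\to\Nscr^{\coord,F}(1)[1]$.
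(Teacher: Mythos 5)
Your proposal follows the paper's proof essentially verbatim: \eqref{ref-7.10-108} is read off from \eqref{ref-7.8-104}, the connections of Lemma \ref{ref-7.4.1-105} are used via Lemma \ref{ref-7.2.3-96} together with their base-change compatibility to reduce everything to an explicit curvature computation for $\Nscr^{\coord}$ in the trivialization $C^{\coord,\Lscr}\ctimes \Omega_F\ctimes \sum_i k\partial_i$, exactly as in the paper. (Your intermediate signs --- the $-$ in $\bar{d}_{\Nscr^{\coord}}(\partial_j)$ and the absence of the Koszul sign $(-1)^{|c|}$ in $d_{\Nscr^{\coord}}$ --- each differ from the paper's, but the two discrepancies cancel and you arrive at the same $R_{\Nscr^{\coord}}(\partial_j)=\sum_{i,\alpha}\eta_\alpha\, d_F(\partial_j\omega^i_\alpha)\,\partial_i=\sum_i\Xi_{ij}\partial_i$.)
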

\begin{proof}
The identity \eqref{ref-7.10-108} is \eqref{ref-7.8-104}.

We can use the canonical connection on $\Nscr^{\aff}$ and $\Nscr^{\coord}$
exhibited in the proof of Lemma \ref{ref-7.4.1-105} to compute
$a_n(\Nscr^{\aff})$ and $a_n(\Nscr^{\coord})$ (using Lemma \ref{ref-7.2.3-96}).
Since these connections are compatible we get
\[
a_n(\Nscr^{\coord})=\psi(a_n(\Nscr^{\aff}))
\]
as elements of complexes.

We now compute $a_n(\Nscr^{\coord})$ explicitly. We
have identifications (e.g.\ \eqref{ref-4.12-49} and \eqref{ref-4.13-50})
\[
C^{\coord,\Lscr}\ctimes_{\Oscr_1} J\Lscr
\cong C^{\coord,\Lscr}\ctimes_{\Oscr} 
\widehat {S\Lscr}_1\cong C^{\coord,L}\ctimes F
\]
Using these identifications we have
\begin{align*}
\Nscr^{\coord}&=C^{\coord,\Lscr}\ctimes_{\Oscr} 
\Omega_{\widehat {S\Lscr^\ast}/\Oscr}\ctimes_{\Oscr} \Lscr
=C^{\coord,\Lscr}\ctimes \Omega_F\ctimes \sum_i k\partial_i
\end{align*}
where $\partial_i=\partial/\partial t_i$. 
We have
\[
\partial_1,\ldots,\partial_d\in
\Oscr^{\coord,L}\ctimes_{\Oscr} \Lscr\subset
C^{\coord,\Lscr}\ctimes_{\Oscr} \Omega_{\widehat
  {S\Lscr^\ast}/\Oscr}\ctimes_{\Oscr} \Lscr
\]
and since $d_{\Nscr^{\coord}}$ is zero on $\Oscr^{\coord,\Lscr}\ctimes_{\Oscr} \Lscr$
we deduce
\[
d_{\Nscr^{\coord}}(\partial_i)=0
\]
For further computation we use the identification
\[
\Nscr^{\coord}=C^{\coord,\Lscr}\ctimes \Omega_F\ctimes \sum_i k\partial_i
\]
where $d_{\Nscr^{\coord}}$ acts as
\[
d_{\Nscr^{\coord}}(c\ctimes \omega\ctimes \partial_i)=
(-1)^{|c|}c\ctimes d_F\omega
\ctimes \partial_i
\]
Remember from \S \ref{ref-4.2.1-36} that the differential $\bar{d}_{B^{\coord}}={}^1\nabla^{\coord}$ 
on $C^{\coord,\Lscr}\ctimes \Omega_F\cong\Omega_{B^{\coord}/C^{\coord,\Lscr}}$ is given by 
\[
d_{C^{\coord,\Lscr}}\otimes 1+\sum_{i,\alpha}\eta_\alpha\omega^i_\alpha \, \partial_i
\]
where we think of $\partial_i$ as a Lie derivative. We compute
\begin{align*}
\bar{d}_{\Nscr^{\coord}}(\partial_j)&=[d_{C^{\coord,L}}\otimes 1+\sum_{i,\alpha}
\eta_\alpha\omega^i_\alpha
\, {\partial_i},\partial_j]\\
&=\sum_{i,\alpha}(\eta_\alpha\partial_j\omega^\alpha_i)\partial_i
\end{align*}
and hence
\begin{align*}
R_{\Nscr^{\coord}}(\partial_j)&=
-(d_{\Nscr^{\coord}}\bar{d}_{\Nscr^{\coord}}+d_{\Nscr^{\coord}}\bar{d}_{\Nscr^{\coord}})(\partial_j)\\
&=-d_{\Nscr^{\coord}}(\sum_{i,\alpha}\eta_\alpha(\partial_j\omega^i_\alpha)\partial_i)\\
&=\sum_{i,\alpha} \eta_{\alpha} d_F(\partial_j\omega^i_\alpha)\partial_i
\end{align*}
Thus
$
\mu(a_n(\Nscr^{\coord}))=\Tr(\Xi^n)
$
where
$\Xi$ is as in the statement of the proposition. This finishes the proof. 
\end{proof}
\section{The Kontsevich local formality quasi-isomorphism}
\label{ref-8-109}
\subsection{The $L_\infty$-morphism}
In this section we assume that $k$ contains the reals and we describe
the exact form of the Kontsevich local formality morphism.

As above let $F=k[[t_1,\ldots,t_d]]$ and $T_{\poly}(F)$, $D_{\poly}(F)$
are respectively the Lie algebras of poly-vector fields and poly-differential
operators over $F$. We equip $T_{\poly}(F)$ and $D_{\poly}(F)$ with the
shifted Gerstenhaber structures introduced in \S\ref{ref-3.2.2-11}. 
For $\gamma\in T_{\poly}^n(F)$ we put
\[
\gamma^{i_1,\ldots, i_{n+1}}=\langle dt_{i_1}\wedge\cdots
\wedge dt_{i_{n+1}},\gamma\rangle
\]
where 
$
\langle -,-\rangle 
$
is the pairing introduced in \eqref{ref-3.2-13}.

The Kontsevich local formality isomorphism $\Uscr:T_{\poly}(F)\r
D_{\poly}(F)$ is defined as follows. We put
\[
\Uscr_n=\sum_{m\ge 0} \sum_{\Gamma\in G_{n,m}} W_\Gamma \Uscr_\Gamma
\]
where the $W_\Gamma$ are some coefficients to be defined below and
where $G_{n,m}$ is a set of directed graphs $\Gamma$ described as follows
\begin{enumerate}
\item There are $n$ vertices of the ``first type'' labeled by $1,\ldots,n$. 
\item There are $m$ vertices of the ``second type''
  labeled by $1,\ldots,m$.
\item The vertices of the second type have no outgoing arrow.
\item There are no loops and double arrows.
\item There are $2n+m-2$ edges. 
\item All edges carry a distinct label..
\end{enumerate}
For use below we also introduce $G_{n,m,\epsilon}$ which is defined in the
same way except that the number of edges of the graphs should be equal to
$2n+m-2-\epsilon$. The number of edges in a graph is denoted by $|\Gamma|$. 

For a vertex $v$ of $\Gamma$ we denote the incoming and outgoing edges of $v$
by $\text{In}(v)$ and $\text{Out}(v)$ respectively. 
Let $\Gamma_i$ be the vertices of the $i$'th kind for $i=1,2$.  Let $\gamma_i\in
T_{\poly}(F)$ and 
put $k_i=|\gamma_i|$.
By definition $\Uscr_\Gamma(\gamma_1\cdots \gamma_n)$ is zero unless
$|\text{Out}(i)|=k_i+1$. In that case
\[
\Uscr_\Gamma(\gamma_1\cdots \gamma_n)(f_1\cdots f_m)=
\prod_{%
\begin{smallmatrix}
v\in \Gamma_1\\
\text{In}(v)={r_1,\ldots,r_d}\\
\text{Out}(v)={s_1,\ldots,s_{k_v+1}}
\end{smallmatrix}
}
\partial_{r_1}\cdots \partial_{r_d}\gamma_v^{s_1\cdots \gamma_{k_v+1}}
\prod_{%
\begin{smallmatrix}
v\in \Gamma_2\\
\text{In}(v)={t_1,\ldots,t_e}
\end{smallmatrix}
}
\partial_{t_1}\cdots \partial_{t_e} f_v
\]
where we assume that the ordering on the labels $s_1\cdots s_d$ is such
that $s_1<\cdots< s_{k_v+1}$. 

The coefficients $W_\Gamma$ are defined as integrals over configuration
spaces. 
 Let $\Hscr$ be the upper half plane and let $\RR$ be its horizontal
boundary.  The group 
\[
G^{(1)}=\{z\mapsto az+b\mid a,b\in \RR, a>0\}
\]
acts on $\Hscr\cup \RR$. $C^+_{n,m}$ is the quotient
$\text{Conf}^+_{n,m}/G^{(1)}$ where $\text{Conf}^+_{n,m}$ is the space
of configurations of $n$ distinct points $p_1,\ldots,p_n$ in $\Hscr$
and $m$ distinct points $q_1,\ldots,q_m$ in $\RR$ such that
$q_1<\cdots < q_m$. The manifold $C^+_{n,m}$ will be oriented as follows (see \cite{AMM1}). One puts $p_1$ in a fixed position and uses
the coordinates of the other points to identify $C^+_{n,m}$ with an open
subset of the affine space $\AA=\CC^{n-1}\times \RR^m$. One then transfers
the standard orientation on $\AA$ to $C^+_{n,m}$. 

For $\Gamma\in G_{n,m,\epsilon}$ put 
\[
\kappa_\Gamma=\bigwedge_{e\in \{\text{edges of $\Gamma$}\}} d\phi_e
\]
 where the (multi-valued) function $\phi_e$ on
$C^+_{n,m}$ is defined as $\tilde{\phi}_e/2\pi$ where $\tilde{\phi}_e$
is computed as in the following image
\begin{center}
\psfrag{h}[l][l]{$\tilde{\phi}_e$}
\psfrag{p}[r][r]{$p$}
\psfrag{q}[t][t]{$q$}
\psfrag{e}[t][t]{$e$}
\includegraphics[width=7cm]{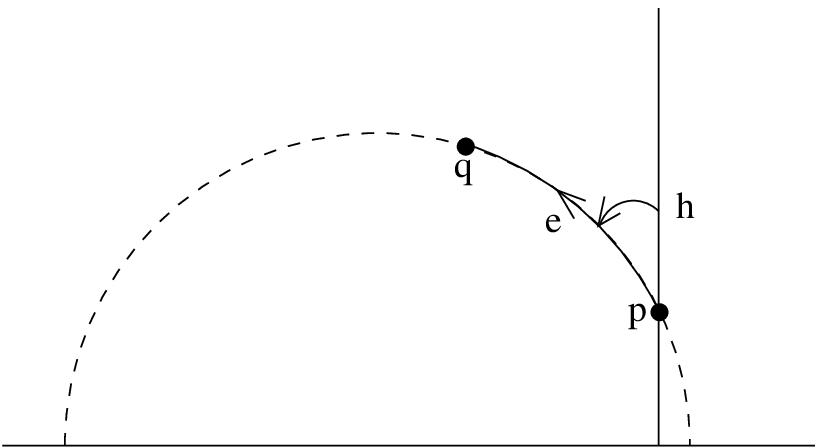}
\end{center}
Thus if $e$ is an edge in $\Gamma$ from $p$ to $q$ then we embed $e$ as a
line in the hyperbolic plane $\Hscr$ and we measure the angle $\phi_e$ in the counter clockwise direction as
indicated in the drawing.

The ordering of the edges in the product $\bigwedge_{e} d\phi_e$ is
first according to the ordering of the starting vertices in the set
$\Gamma_1$ (the ordering is by label) and then according to an
arbitrarily chosen but fixed ordering on outgoing edges.

Now we put\footnote{This definition
  differs by a sign from Kontsevich's definition. Kontsevich's
  definition necessitates an unpleasant sign change in the definition of
the Lie bracket
  on $T_{\poly}(F)$ (see \cite{AMM1}). Moreorer this sign change
  destroys the Gerstenhaber property of $T_{\poly}(F)$. A tedious
  computation shows that with our definition no sign changes for the Lie 
  bracket are necessary. }
\begin{equation}
\label{ref-8.1-110}
W_\Gamma=(-1)^{|\Gamma|(|\Gamma|-1)/2}
\int_{C^+_{n,m}}\kappa_\Gamma
\end{equation}
One easily sees  that the product $W_\Gamma \Uscr_\Gamma$ is
independent of the chosen ordering on outgoing edges. 

Assume $n=1$. In that case $G_{1,m}$ contains only one graph $\Gamma_0$ and
$\kappa_{\Gamma_0}=(-1)^{m(m-1)/2}1/m!$.  Furthermore
\begin{align*}
\Uscr_{\Gamma_0}(\gamma)(f_1,\ldots,f_m)&=\gamma^{i_1\cdots i_m} \partial_{i_1}f_1
\cdots \partial_{i_m}f_m\\
&=\langle df_1\cdots df_m,\gamma\rangle
\end{align*}
where $\langle-,-\rangle$ is as in \eqref{ref-3.2-13}.  Hence
\begin{equation}
\label{ref-8.2-111}
\Uscr_1(\gamma)(f_1,\ldots,f_m)=(-1)^{m(m-1)/2}\frac{1}{m!}\langle df_1\cdots df_m,\gamma\rangle
\end{equation}
It is easy to see that $\Uscr_1$ coincides with the HKR map 
$
T_{\poly}(F)\r D_{\poly}(F)
$
as defined by \eqref{ref-6.5-71}. 

\subsection{Compatibility with cupproduct}

Let $C$ be a commutative DG-$k$-algebra (equipped with some topology). Then we may extend $\Uscr$ to an
$L_\infty$-morphism 
\[
\tilde{\Uscr}:C\ctimes T_{\poly}(F)\r C\ctimes D_{\poly}(F)
\]
Assume now that we have a solution
$\omega=\sum_{\alpha}\eta_\alpha\omega_\alpha$ to the Maurer-Cartan
equation in $(C^1\ctimes T^0_{\poly}(F))$. By property (P4) we know
that $\Uscr_1(\omega)$ is a solution to the Maurer Cartan equation in
$C^1\ctimes D^0_{\poly}(F))$. It will be convenient to denote
$\Uscr_1(\omega)$ also by~$\omega$.

Under suitable convergence hypotheses we know that there exists a
twisted $L_\infty$-morphism (see \S\ref{ref-6.2-69}) 
\[
\tilde{\Uscr}_\omega:(C\ctimes T_{\poly}(F))_\omega\r (C\ctimes D_{\poly}(F))_\omega
\]
Kontsevich sketches a proof that $\Uscr_\omega$ commutes with cup
product up to homotopy.\footnote{Kontsevich's proves this in fact for
  general solutions of the Maurer-Cartan equation in $C\ctimes
  T_{\poly}(F)$.}  A more detailed proof in the slightly restricted 
case that $\omega\in T^1_{\poly}(F)$ (i.e.\ a Poisson bracket) was given in \cite{MT}.  In
\cite{Mochizuki} it is even shown that $\tilde{\Uscr}_\omega$ can be
extended to an $A_\infty$-morphism (this is again in the case
$\omega\in T^1_{\poly}(F))$.

For the benefit of the reader we will state a result below which will
be sufficient for the sequel. It can be obtained by copying the proof of 
\cite{MT}, taking into account our modified sign conventions.  

It is well known that $C^+_{n,m}$ can be canonically compactified as a
manifold with corners $\bar{C}^+_{n,m}$ \cite{Ko3}.  Let
 $\bar{C}_{2,0}=\bar{C}^+_{2,0}$ (the ``$+$'' is superfluous) be the ``Eye'' as in the following figure
\[
\includegraphics[width=7cm]{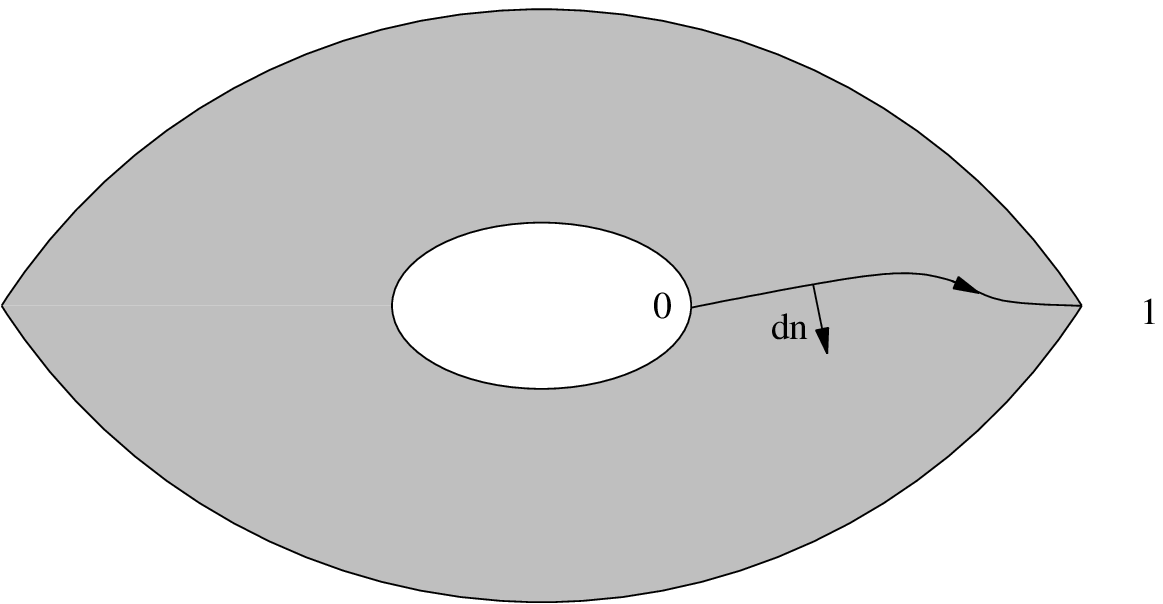}
\]
The upper outer boundary is where $p_1$ approaches the real line, the lower outer boundary
is where $p_2$ approaches the real line. The inner boundary is where
$p_1$ and $p_2$ approach each other, away from the real line. 

The right corner is the locus where $p_1$, $p_2$ both approach the
real line with $p_1$ to the left of $p_2$.  Following Kontsevich
\cite{Ko3} we have indicated a path $\xi:[0,1]\r \bar{C}^+_{2,0}$ from
a point on the inner boundary (labeled ``$0$'') to the right corner
(labeled ``$1$'').

Next we consider the map
\[
F:C^+_{n,m}\r C_{2,0}
\]
which is given by projection onto the first two points. One may show
that this map can be extended to a map 
\[
\bar{F}:\bar{C}^+_{n,m}\r \bar{C}_{2,0}
\]
and we put $Z_{n,m}=\bar{F}^{-1}\xi([0,1])$. We orient $Z$ by the normal $dn$
(as indicated in the above figure). 

For $\Gamma\in G_{n,m,1}$ we put
\[
\tilde{W}_\Gamma=(-1)^{|\Gamma|(|\Gamma|-1)/2}\int_{Z_{n,m}} \kappa_\Gamma
\]
After a tedious computation, mimicking \cite{MT}, we obtain the following.
\begin{proposition} 
\label{ref-8.1-112}
For $\alpha,\beta\in T_{\poly}(F)$ we have as maps from $(C\ctimes T_{\poly}(F))^2_\omega$
to $(C\ctimes D_{\poly}(F))_\omega$
\[
\tilde{\Uscr}_{\omega,1}(\alpha)\cup \tilde{\Uscr}_{\omega,1}(\beta)
-\tilde{\Uscr}_{\omega,1}(\alpha\cup\beta)
+d(H(\alpha,\beta))-H(d\alpha,\beta)-(-1)^{|\alpha|+1}H(\alpha,d\beta)=0
\]
where
\begin{equation}
\label{ref-8.3-113}
H(\alpha,\beta)=\sum_{n,m\ge 0,\Gamma\in G_{n,m,1}}
(-1)^{m-1} \frac{1}{(n-2)!}\tilde{W}_\Gamma
\Uscr_\Gamma(\alpha\beta\omega^{n-2})
\end{equation}
(the operators $\Uscr_\Gamma$ are extended multilinearly to $C\ctimes
T_{\poly}(F)$).
In particular $\tilde{\Uscr}_{\omega,1}$ commutes with cupproduct, up to a natural
homotopy.
\end{proposition}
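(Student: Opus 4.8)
The plan is to follow the strategy of \cite{MT}, where the analogous untwisted statement is proved: the identity is an instance of Stokes' theorem on the compactified configuration spaces, and the only real work is to keep track of the signs coming from our modified conventions in \S\ref{ref-8-109}. First I would record the degree count. For $\Gamma\in G_{n,m,1}$ the graph has $|\Gamma|=2n+m-3$ edges, while $Z_{n,m}=\bar{F}^{-1}(\xi([0,1]))$ has dimension $2n+m-3$: it is cut out inside the $(2n+m-2)$-dimensional manifold-with-corners $\bar{C}^+_{n,m}$ as the preimage, under the (generic) submersion $\bar{F}$ onto the $2$-dimensional $\bar{C}_{2,0}$, of the $1$-dimensional arc $\xi([0,1])$. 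Hence $\kappa_\Gamma$ restricts to a top-degree form on $Z_{n,m}$, and since each $d\phi_e$ is a closed $1$-form the product $\kappa_\Gamma$ is closed; Stokes' theorem gives $\int_{\partial Z_{n,m}}\kappa_\Gamma=0$ for every such $\Gamma$.

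Next I would enumerate the codimension-one faces of $Z_{n,m}$ and show that, after multiplying $\int_{\partial Z_{n,m}}\kappa_\Gamma$ by $(-1)^{|\Gamma|(|\Gamma|-1)/2}$ and by the operator $(-1)^{m-1}\frac{1}{(n-2)!}\Uscr_\Gamma(\alpha\beta\omega^{n-2})$ and summing over $n$, $m$ and $\Gamma\in G_{n,m,1}$, the faces organize into the five terms of the asserted identity. The two faces lying over the endpoints of the path $\xi$ are the special ones. Over $\xi(1)$, the right corner of the Eye where $p_1$ and $p_2$ both approach $\RR$ with $p_1$ to the left of $p_2$, the configuration space and the graph factor as a product of two Kontsevich-type pieces, producing the term $\tilde{\Uscr}_{\omega,1}(\alpha)\cup \tilde{\Uscr}_{\omega,1}(\beta)$. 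Over $\xi(0)$, the inner boundary where $p_1$ and $p_2$ collide inside $\Hscr$, the colliding pair of aerial vertices blows down to a single vertex carrying $\alpha\cup\beta$ (evaluated by $\Uscr_1$ on the contracted configuration), producing the term $-\tilde{\Uscr}_{\omega,1}(\alpha\cup\beta)$. Every remaining face is of one of Kontsevich's two standard types --- a collision of two or more aerial vertices not consisting of the special pair alone, or a collision of a subset of aerial and terrestrial vertices with $\RR$ --- and the usual vanishing lemmas kill all but the faces with exactly two aerial vertices colliding or exactly one extra terrestrial vertex. Re-expanding $\Uscr_\Gamma(\alpha\beta\omega^{n-2})$ according to which factors ($\alpha$, $\beta$, or one of the $\omega$'s) take part in the collision, and using the Maurer--Cartan equation for $\omega$ to recombine the $\omega$-contributions, these faces assemble into $d(H(\alpha,\beta))-H(d\alpha,\beta)-(-1)^{|\alpha|+1}H(\alpha,d\beta)$ with $H$ as in \eqref{ref-8.3-113}, where $d$ denotes the twisted differential on the two sides, i.e.\ $d_{\Hoch}+[\omega,-]$ on $C\ctimes D_{\poly}(F)$ and $[\omega,-]$ on $C\ctimes T_{\poly}(F)$.

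Finally I would deal with convergence and with the signs. When the identity is applied in the situation of \S\ref{ref-6.3-72}, with $C=C^{\coord,L}$ and $\omega$ the Maurer--Cartan form, the relevant filtration is complete and the sums over $n$, $m$ are finite modulo each filtration step, so the formal manipulations above are legitimate. The genuine obstacle --- the one point where the argument really differs from \cite{MT} --- is the sign bookkeeping: one must reconcile the sign $(-1)^{|\Gamma|(|\Gamma|-1)/2}$ together with the orientation of $Z_{n,m}$ by the normal $dn$, the Koszul signs generated by the operadic insertions $\Uscr_\Gamma(\alpha\beta\omega^{n-2})$ and by commuting the $\omega$-factors past $\alpha$ and $\beta$, and the signs in the twisting formulas \eqref{ref-5.3-55}--\eqref{ref-5.5-57} and in the Gerstenhaber--Voronov sign convention for $\cup$. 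I expect, just as in the untwisted case, that our choice of the sign $(-1)^{p(p-1)/2}$ in $\Uscr_1$ (see \eqref{ref-6.5-71}) together with the sign in $W_\Gamma$ from \eqref{ref-8.1-110} is precisely what makes all intermediate signs cancel, so that the five coefficients come out as $+1$, $-1$, $+1$, $-1$, $-(-1)^{|\alpha|+1}$ in the stated order; carrying this out face by face is the tedious computation alluded to in the statement.
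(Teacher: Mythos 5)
Your proposal is correct and follows essentially the same route as the paper, which itself only indicates that the result is obtained ``after a tedious computation, mimicking \cite{MT}'': Stokes' theorem for the closed form $\kappa_\Gamma$ on $Z_{n,m}=\bar F^{-1}(\xi([0,1]))$, identification of the two endpoint faces of the Eye with the terms $\tilde{\Uscr}_{\omega,1}(\alpha)\cup\tilde{\Uscr}_{\omega,1}(\beta)$ and $-\tilde{\Uscr}_{\omega,1}(\alpha\cup\beta)$, and assembly of the remaining codimension-one faces (via the vanishing lemmas and the Maurer--Cartan equation for $\omega$) into the homotopy terms, with the only genuinely new work being the sign bookkeeping forced by the conventions of \S\ref{ref-8-109}. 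Your degree count $|\Gamma|=2n+m-3=\dim Z_{n,m}$ and your identification of $d$ as the twisted differential are both as intended.
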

\section{Proof of Theorem \ref{ref-1.3-0}}
\label{ref-9-114}
In this section we will initially assume that $k$ contains the reals
and we let the local formality morphism $\Uscr$ in \eqref{ref-6.4-70}
be the one defined by Kontsevich (as in \S\ref{ref-8-109}).
\subsection{The local case}
\label{ref-9.1-115}
Combining the $L_\infty$-morphisms 
\eqref{ref-6.11-78}\eqref{ref-6.9-76}\eqref{ref-6.10-77}%
\eqref{ref-6.12-79}\eqref{ref-6.15-82}\eqref{ref-6.13-80} we obtain a
commutative diagram
\begin{equation}
\label{ref-9.1-116}
\xymatrix{
\scriptstyle T_{\poly}^{L_2}(R_2)\ar[r] &
\scriptstyle T_{\poly,C^{\aff,L}}(C^{\aff,L}\ctimes_{R_1} JL)\ar[r]
\ar[d]^{\Vscr^{\aff}} &
\scriptstyle T_{\poly,C^{\coord,L}}(C^{\coord,L}\ctimes_{R_1} JL) \ar[r]^-\cong\ar[d]^{\Vscr^{\coord}}&
\scriptstyle (C^{\coord,L}\ctimes T_{\poly}(F))_{\omega}
\ar[d]^{\tilde{\Uscr}_\omega}\\
\scriptstyle D_{\poly}^{L_2}(R_2)\ar[r] &
\scriptstyle D_{\poly,C^{\aff,L}}(C^{\aff,L}\ctimes_{R_1} JL)\ar[r] &
\scriptstyle D_{\poly,C^{\coord,L}}(C^{\coord,L}\ctimes_{R_1} JL) \ar[r]_-\cong&
\scriptstyle (C^{\coord,L} \ctimes D_{\poly}(F))_{\omega}
}
\end{equation}
where the horizontal maps are strict morphisms (i.e.\ the only the
first Taylor coefficient is non-zero).

Our aim is to sheafify diagram \eqref{ref-9.1-116} and to look at the
result in the derived category of $\Oscr$-modules. To determine the
result it is sufficient to understand the $(-)_1$ part of
\eqref{ref-9.1-116}. I.e.
\[
\xymatrix{
\scriptstyle T_{\poly}^{L_2}(R_2)\ar[r] &
\scriptstyle T_{\poly,C^{\aff,L}}(C^{\aff,L}\ctimes_{R_1} JL)\ar[r]
\ar[d]^{\Vscr^{\aff}_1} &
\scriptstyle T_{\poly,C^{\coord,L}}(C^{\coord,L}\ctimes_{R_1} JL) 
\ar[r]^-\cong\ar[d]^{\Vscr^{\coord}_1}&
\scriptstyle (C^{\coord,L}\ctimes T_{\poly}(F))_{\omega}
\ar[d]^{\tilde{\Uscr}_{\omega,1}}\\
\scriptstyle D_{\poly}^{L_2}(R_2)\ar[r] &
\scriptstyle D_{\poly,C^{\aff,L}}(C^{\aff,L}\ctimes_{R_1} JL)\ar[r] &
\scriptstyle D_{\poly,C^{\coord,L}}(C^{\coord,L}\ctimes_{R_1} JL) \ar[r]_-\cong&
\scriptstyle (C^{\coord,L} \ctimes D_{\poly}(F))_{\omega}
}
\]
\begin{lemmas}
\label{ref-9.1.1-117}
The map $\Vscr^{\aff}_1:  T_{\poly,C^{\aff,L}}(C^{\aff,L}\ctimes_{R_1} JL)
\r D_{\poly,C^{\aff,L}}(C^{\aff,L}\ctimes_{R_1} JL)$ commutes with the Lie bracket
and the cupproduct up to homotopies which are functorial for algebraic
Lie algebroid morphisms which satisfy \eqref{ref-6.2-67}. 
\end{lemmas}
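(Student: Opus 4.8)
The plan is to handle the Lie bracket and the cupproduct separately: the bracket homotopy falls out of the $L_\infty$-formalism applied to $\Vscr^{\aff}$, while the cupproduct homotopy is gotten by transporting the explicit homotopy of Proposition~\ref{ref-8.1-112} to the coordinate spaces and then descending it along the $\mathfrak{gl}_d(k)$-action, exactly as $\tilde\Uscr_\omega$ was descended in \S\ref{ref-6.3-72}.

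First I would treat the Lie bracket. Since $\Vscr^{\aff}$ is an $L_\infty$-morphism between the \emph{DG-Lie} algebras $T_{\poly,C^{\aff,L}}(C^{\aff,L}\ctimes_{R_1}JL)$ and $D_{\poly,C^{\aff,L}}(C^{\aff,L}\ctimes_{R_1}JL)$, the relevant $L_\infty$-relation reads
\[
\Vscr^{\aff}_1([x,y])-[\Vscr^{\aff}_1(x),\Vscr^{\aff}_1(y)]
= d\,\Vscr^{\aff}_2(x,y)-\Vscr^{\aff}_2(dx,y)-(-1)^{|x|}\Vscr^{\aff}_2(x,dy),
\]
the higher brackets of the target contributing nothing because $D_{\poly}$ is an honest DG-Lie algebra; thus $\Vscr^{\aff}_2$ is the desired homotopy. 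Its functoriality is inherited from that of the whole construction: the coordinate and affine coordinate spaces $R^{\coord,L}$, $R^{\aff,L}$ and the DG-algebras $C^{\coord,L}$, $C^{\aff,L}$ depend functorially on $(R,L)$ for algebraic morphisms satisfying \eqref{ref-6.2-67}---this is what underlies the functoriality asserted in Theorem~\ref{ref-6.1-65}---and $\Vscr^{\aff}$, with all of its Taylor coefficients, is built from the universal datum $\tilde\Uscr$ by base change and passage to $\mathfrak{gl}_d(k)$-invariants, operations which are all natural.

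Next I would treat the cupproduct. Applying Proposition~\ref{ref-8.1-112} with $C=C^{\coord,L}$ and $\omega$ the Maurer--Cartan form of \S\ref{ref-4.2.1-36}, and transporting the resulting homotopy through the DG-algebra isomorphisms \eqref{ref-6.9-76} and \eqref{ref-6.10-77}, produces a homotopy $H^{\coord}$ witnessing that $\Vscr^{\coord}_1$ commutes with cupproduct on $D_{\poly,C^{\coord,L}}(C^{\coord,L}\ctimes_{R_1}JL)$. To descend it, it is enough to check that $i_{\bar v}\big(H^{\coord}(\alpha,\beta)\big)=0$ for every $v\in\mathfrak{gl}_d(k)$ whenever $\alpha,\beta$ are $\mathfrak{gl}_d(k)$-invariant, the vanishing of $L_{\bar v}\big(H^{\coord}(\alpha,\beta)\big)$ then following formally from the Cartan identity and the homotopy equation; $H^{\coord}$ then restricts to a homotopy $H^{\aff}$ on the invariant subalgebra, which under the identification \eqref{ref-6.15-82}--\eqref{ref-6.16-83} of that subalgebra with the affine version is the required homotopy for $\Vscr^{\aff}_1$. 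Since the operators $\Uscr_\Gamma$ are extended $C^{\coord,L}$-multilinearly, $i_{\bar v}$ acts on $H^{\coord}$ by Leibniz on the coefficients; using $i_{\bar v}\alpha=i_{\bar v}\beta=0$ and $i_{\bar v}\omega=1\otimes v\in\mathfrak{gl}_d(k)$ (Lemma~\ref{ref-4.2.1-39}), the formula \eqref{ref-8.3-113} reduces $i_{\bar v}\big(H^{\coord}(\alpha,\beta)\big)$ to $\pm\sum_{n,m,\Gamma\in G_{n,m,1}}\tfrac{1}{(n-3)!}\tilde W_\Gamma\,\Uscr_\Gamma(\alpha\beta\,v\,\omega^{n-3})$, which vanishes by the analogue of property (P5) for the homotopy weights $\tilde W_\Gamma$. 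Functoriality of $H^{\aff}$ follows as before, since $H^{\coord}$ is assembled from $\Uscr_\Gamma$, $\tilde W_\Gamma$ and $\omega$, all pulled back along functorial constructions.

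The main obstacle is precisely this last vanishing: one must show that inserting a linear vector field $v\in\mathfrak{gl}_d(k)$ into the $\tilde W_\Gamma$-weighted sum over $\Gamma\in G_{n,m,1}$ gives zero---the (P5)-type property for the homotopy rather than for $\Uscr$ itself. This is established by the same configuration-space argument used for (P5) and Proposition~\ref{ref-8.1-112} (linearity forces all but at most one incoming edge at the $v$-vertex to disappear, and the remaining integrals over $Z_{n,m}$ cancel), obtained by copying~\cite{MT} with the modified sign conventions of \S\ref{ref-8-109}. Everything else is routine transport of structure along the isomorphisms already constructed in \S\ref{ref-4-31} and \S\ref{ref-6.3-72}.
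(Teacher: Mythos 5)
Your proposal follows essentially the same route as the paper: the bracket homotopy is read off from the $L_\infty$-relations for $\Vscr^{\aff}$, and the cupproduct homotopy is obtained by descending the homotopy $H$ of Proposition \ref{ref-8.1-112} under the $\mathfrak{gl}_d(k)$-action via the identity $i_{\bar v}\omega=1\otimes v$ of Lemma \ref{ref-4.2.1-39} together with the (P5)-type vanishing $\tilde W_\Gamma\,\Uscr_\Gamma(\gamma\alpha)=0$ for $\gamma\in\mathfrak{gl}_d(k)$, which is exactly the extra ingredient the paper invokes. Your write-up merely spells out in more detail the descent computation that the paper compresses into a reference to Proposition \ref{ref-5.4.1-63} and \eqref{ref-4.6-40}.
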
 
\begin{proof} For the Lie bracket this is clear since $\Vscr^{\aff}_1$ is obtained
from a $L_\infty$-morphism.

For the cupproduct we need to show that the homotopy $H$ defined by
\eqref{ref-8.3-113} descends to a map
$T_{\poly,C^{\aff,L}}(C^{\aff,L}\ctimes_{R_1} JL)^2\r
D_{\poly,C^{\aff,L}}(C^{\aff,L}\ctimes_{R_1} JL)$. This is a
computation similar to the proof of Proposition \ref{ref-5.4.1-63}
combined with \eqref{ref-4.6-40}.  We need the the following version
of (P5).
\begin{itemize}
\item
$\tilde{W}_\Gamma \Uscr_\Gamma(\gamma\alpha)=0$ for $q\ge 3$ ($q$ being the number edges 
of the ``first type'' in $\Gamma$) and $\gamma\in\frak{gl}_d(k)\subset T^{\text{poly},1}(F)$.
\end{itemize}
This is proved in exactly the same way as (P5). See
\cite[\S7.3.3.1]{Ko3}.
\end{proof}
We will now evaluate the formula  for $\tilde{\Uscr}_{\omega,1}(\gamma)$
where we assume $\gamma\in T_{\poly}(F)$. 
\[
\tilde{\Uscr}_{\omega,1}(\gamma)=\sum_{j\ge 0} \frac{1}{j!} \tilde{\Uscr}_{j+1}
(\omega^j\gamma)
\]
We may write
\[
\omega=\sum_\alpha\eta_\alpha\omega_\alpha
\]
with $\eta_\alpha\in  C^{\coord}_1$ and $\omega_\alpha\in T^0_{\poly}(F)$. Below
we suppress the summation sign over $\alpha$.  Thus
\[
\tilde{\Uscr}_{j+1}(\omega^j \gamma)= \eta_{\alpha_j}\cdots \eta_{\alpha_1}
\Uscr_{j+1}(\omega_{\alpha_1}\cdots \omega_{\alpha_j} \gamma)
\]
To understand the (absence of) signs in this formula we note that we consider
$\tilde{\Uscr}$ as a degree zero map $S^{j+1}((C^{\coord,L}\ctimes T_{\poly}(F))[1])
\r (C^{\coord,L}\ctimes D_{\poly}(F))[1]$.
Thus
$\omega$ has \emph{even} degree when appearing as argument to $\tilde{\Uscr}$.
However the $\omega_\alpha$ as argument to $\tilde{\Uscr}$ have \emph{odd} degree.
By contrast the degree of the $\eta_\alpha$ is unchanged. 

We need to enumerate the graphs contributing to the evaluation of 
$\Uscr_{j+1}(\omega_{\alpha_1}\cdots \omega_{\alpha_j} \gamma)$. 
We need to consider
the following type of graphs.
\begin{enumerate}
\item There are $j$ vertices labeled by
  $\omega_{\alpha_1},\ldots,\omega_{\alpha_j}$. These have $1$
  outgoing arrow.
\item There is $1$ vertex labeled $\gamma$ which has $p+1$ outgoing arrows.
\item There are $m=p-j+1$ vertices labeled by elements $f_1,\ldots,f_m\in F$. 
\end{enumerate}
The edges leaving $\gamma$ are ordered by their ending vertex where we extend
the implied ordering of vertices of the first kind to all vertices via
$\omega_{\alpha_1}<\cdots<\omega_{\alpha_j}<\gamma<f_1<\cdots < f_m$.

Since there are no loops and double edges we find that $\gamma$ is
connected through an outgoing arrow with all other vertices. It
remains to allocate the $j$ arrows emanating from the vertices labeled
$\omega_\alpha$.

Recall that
by  [Ko, \S7.3.1.1 \S7.3.3.1]
that $W_\Gamma$ is zero if $\Gamma$ contains one the following subgraphs.
\begin{equation}
\label{ref-9.2-118}
\psfrag{p}[][]{$p$}
\psfrag{q}[][]{$q$}
\psfrag{r}[][]{$r$}
\includegraphics[width=4cm]{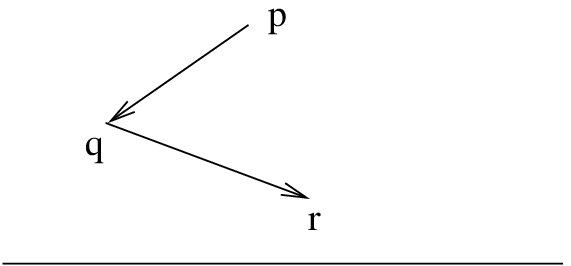} 
\end{equation}
or
\begin{equation}
\label{ref-9.3-119}
\psfrag{p}[][]{$p$}
\psfrag{q}[][]{$q$}
\psfrag{r}[][]{$r$}
\includegraphics[width=4cm]{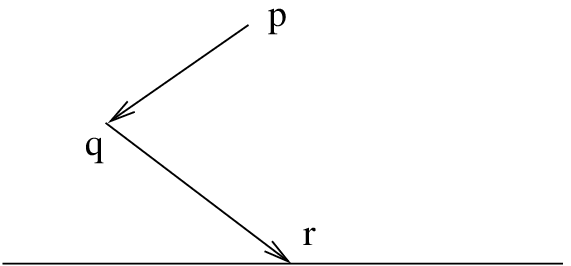}
\end{equation}
or
\begin{equation}
\label{ref-9.4-120}
\psfrag{p}[][]{$p$}
\psfrag{q}[][]{$q$}
\psfrag{r}[][]{$r$}
\includegraphics[width=4cm]{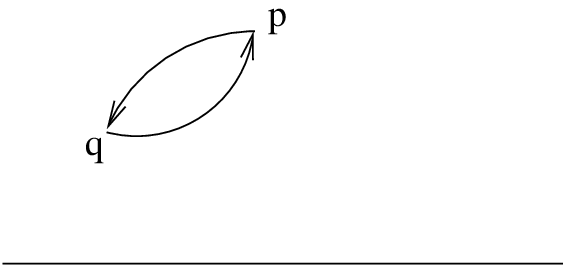}
\end{equation}
where $q$ has no additional incoming or outgoing vertices.

If there is an $\omega_\alpha$ which does not have an incoming arrow
from another $\omega_\alpha$ then we are in one of 
the situations \eqref{ref-9.2-118}\eqref{ref-9.3-119} or  \eqref{ref-9.4-120} and
hence $W_\Gamma=0$. The remaining graphs are of the form
\begin{equation}
\label{ref-9.5-121}
\psfrag{w1}[][]{$\omega_{\alpha_{\sigma(1)}}$}
\psfrag{w2}[][]{$\omega_{\alpha_{\sigma(2)}}$}
\psfrag{w3}[][]{$\omega_{\alpha_{\sigma(3)}}$}
\psfrag{w-2}[][]{$\omega_{\alpha_{\sigma(j-2)}}$}
\psfrag{w-1}[][]{$\omega_{\alpha_{\sigma(j-1)}}$}
\psfrag{w}[][]{$\omega_{\alpha_{\sigma(j)}}$}
\psfrag{g}[][]{$\gamma$}
\psfrag{f1}[][]{$f_1$}
\psfrag{fm}[][]{$f_m$}
\includegraphics[width=7cm]{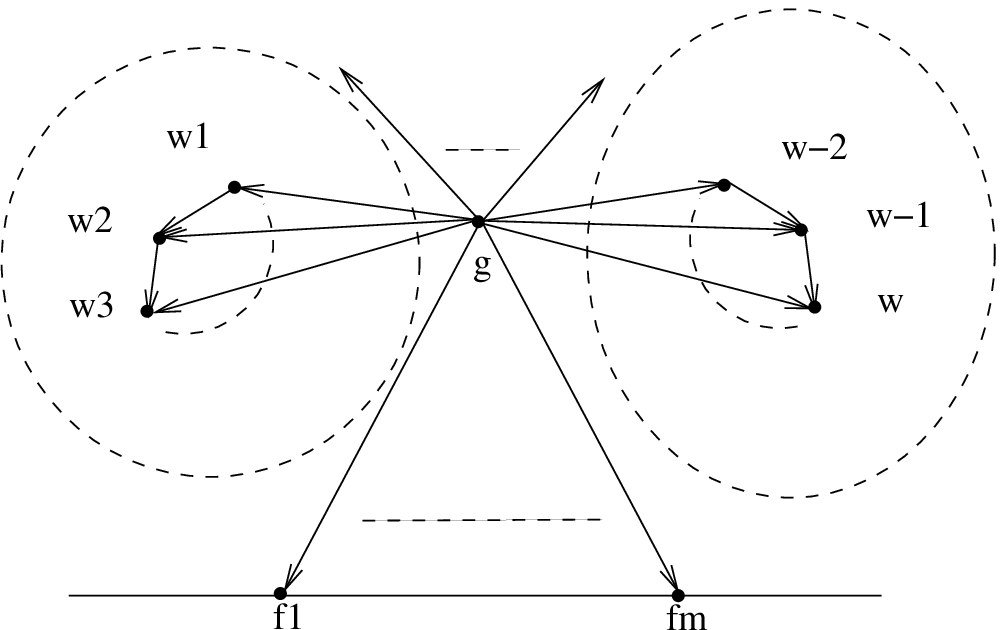}
\end{equation}
where $\sigma$ is a permutation of $\{1,\ldots,j\}$. 

Let us define $\Sigma_k$ as the following ``opposite wheel''
\begin{center}
\psfrag{1}[][]{$1$}
\psfrag{2}[][]{$2$}
\psfrag{k-1}[][]{$k-1$}
\psfrag{k}[][]{$k$}
\psfrag{k1}[][]{$k+1$}
\includegraphics[width=5cm]{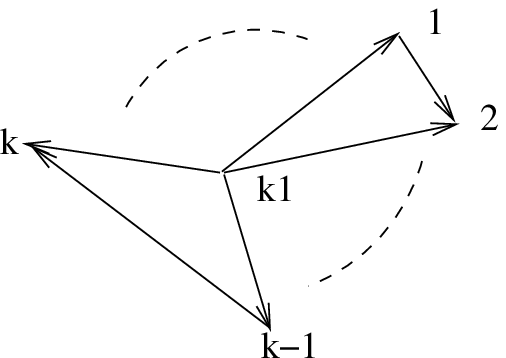}
\end{center}
and put $W_k=W_{\Sigma_k}$. To fix the sign we order the vertices  according to
their labels and the outgoing edges of the central vertex 
according to their ending vertex.

Now we compute $W_\Gamma$ and $\Uscr_\Gamma$ for a graph as in
\eqref{ref-9.5-121}. First we consider $W_\Gamma$. Assume that there are
$s$ wheels of size $l_1,\ldots, l_s$ respectively.

Write $g_i$ for the the edge emanating in $\omega_{\alpha_i}$ for
$i=1,\ldots,j$ and $e_i$ for the edge connecting $\gamma$ to
$\omega_{\alpha_i}$ for $i=1,\dots,j$. Finally write $h_i$ for the
edge connecting $\gamma$ to $f_i$ for $i=1,\ldots,p+1-j=m$. Then
\[
W_\Gamma=(-1)^{(m+2j)(m+2j-1)/2}\int d\phi_{g_1}\cdots d\phi_{g_j} d\phi_{e_1}\cdots d\phi_{e_j}d\phi_{h_1}\cdots d\phi_{h_{p+1-j}}
\]
To evaluate the integral we may put $\gamma$ in $i\in \Hscr$. This
reduces the symmetry group $G^{(1)}$ to the identity. We may clearly
choose the $\phi_{h_i}$ freely apart from the fact that
$\phi_{h_1}<\cdots < \phi_{h_{p+1-j}}$.  Thus we get
\begin{align*}
  W_\Gamma&=(-1)^{(m+2j)(m+2j-1)/2}\frac{1}{m!} 
\int  d\phi_{g_1}\cdots d\phi_{g_j} d\phi_{e_1}\cdots d\phi_{e_j}\\
  &=(-1)^{(m+2j)(m+2j-1)/2}\frac{1}{m!}\int d\phi_{g_{\sigma(1)}}\cdots
  d\phi_{g_{\sigma(j)}} d\phi_{e_{\sigma(1)}}\cdots d\phi_{e_{\sigma(j)}}\\
&=(-1)^{\sum_{p<q} l_pl_q} (-1)^{(m+2j)(m+2j-1)/2}\frac{1}{m!}
\int
d\phi_{g_{\sigma(1)}}\cdots d\phi_{g_{\sigma(l_1)}} d\phi_{e_{\sigma(1)}}\cdots d\phi_{e_{\sigma(l_1)}}\cdots\\
&=(-1)^{\sum_{p<q} l_pl_q}(-1)^{(m+2j)(m+2j-1)/2}(-1)^{\sum_i 2l_i(2l_i-1)/2}\frac{1}{m!} W_{l_1}\cdots W_{l_s}\\
&=(-1)^{\sum_{p<q} l_pl_q}(-1)^{(m+2j)(m+2j-1)/2}(-1)^j\frac{1}{m!}  W_{l_1}\cdots W_{l_s}
\end{align*}
where in the last line we have used the identities $(2l(2l-1))/2\equiv l\mod 2$ and $\sum_i l_i=j$. 

Now we compute\ $\Uscr_\Gamma(\omega_{\alpha_1}\cdots \omega_{\alpha_j}
\gamma)(f_1,\ldots,f_m)$. We are short of symbols so we use the same
symbol for an edge and for its corresponding index. Of course we use
the same ordering of the edges as above. We find
\begin{multline*}
\Uscr_\Gamma(\omega_{\alpha_1}\cdots \omega_{\alpha_j}
\gamma)(f_1,\ldots,f_m)=\\
(\partial_{e_{\sigma(1)}}\partial_{g_{\sigma(l_1)}} \omega^{g_{\sigma(1)}}_{\alpha_{\sigma(1)}})
(\partial_{e_{\sigma(2)}}\partial_{g_{\sigma(1)}} \omega^{g_{\sigma(2)}}_{\alpha_{\sigma(2)}})
(\partial_{e_{\sigma(3)}}\partial_{g_{\sigma(2)}} \omega^{g_{\sigma(3)}}_{\alpha_{\sigma(3)}})
\cdots
(\partial_{h_1} f_1\cdots \partial_{h_m} f_m)
\gamma^{e_1\cdots e_j h_1\cdots h_m}
\end{multline*}
We need a more concise way of writing this. Let $\Xi_\alpha$ be the
matrix of $1$-forms
$d(\partial_i\omega^j_\alpha)=\partial_k\partial_i(\omega^j_\alpha)
dt^k$. Then $\Uscr_\Gamma(\omega_{\alpha_1}\cdots \omega_{\alpha_j}
\gamma)(f_1,\ldots,f_m)$ is equal to
\def\HKR{\operatorname{HKR}}
\begin{multline*}
(-1)^\sigma\langle \Tr(\Xi_{\alpha_{\sigma(1)}}\cdots \Xi_{\alpha_{\sigma(l_1)}})\cdots
\Tr(\Xi_{\alpha_{\sigma(l_1+\cdots+l_{s-1}+1)}}\cdots \Xi_{\alpha_{\sigma(l_1+\cdots+l_{s-1}+l_s)}})
df_1\cdots df_m,\gamma\rangle\\
=(-1)^\sigma \langle df_1\cdots df_m, 
\Tr(\Xi_{\alpha_{\sigma(  l_1+\cdots+l_{s-1}+l_s    )}}
\cdots \Xi_{\alpha_{\sigma(    l_1+\cdots+l_{s-1}+1                              )}})
 \cdots
\Tr(\Xi_{\alpha_{\sigma(l_1)}}\cdots \Xi_{\alpha_{\sigma(1)}})
\wedge
\gamma\rangle\\
=(-1)^\sigma (-1)^{\frac{m(m-1)}{2}}m!\HKR(\Tr(\Xi_{\alpha_{\sigma(  l_1+\cdots+l_{s-1}+l_s    )}}
\cdots \Xi_{\alpha_{\sigma(    l_1+\cdots+l_{s-1}+1                              )}})
 \cdots
\Tr(\Xi_{\alpha_{\sigma(l_1)}}\cdots \Xi_{\alpha_{\sigma(1)}})
\wedge
\gamma)\\(f_1,\ldots,f_m)
\end{multline*}
where the first equality follows from \eqref{eqadjoint} and the second
equality is \eqref{ref-8.2-111}. So
\begin{multline*}
\Uscr_\Gamma(\omega_{\alpha_1}\cdots \omega_{\alpha_j}
\gamma)=\\(-1)^\sigma (-1)^{\frac{m(m-1)}{2}}m!\HKR(\Tr(\Xi_{\alpha_{\sigma(  l_1+\cdots+l_{s-1}+l_s    )}}
\cdots \Xi_{\alpha_{\sigma(    l_1+\cdots+l_{s-1}+1                              )}})
 \cdots
\Tr(\Xi_{\alpha_{\sigma(l_1)}}\cdots \Xi_{\alpha_{\sigma(1)}})
\wedge\gamma)
\end{multline*}
Put
\[
\tilde{\Uscr}_\Gamma(\omega^j\gamma)(f_1,\ldots,f_n)= \eta_{\alpha_j}\cdots \eta_{\alpha_1}\Uscr_\Gamma(\omega_{\alpha_1}\cdots \omega_{\alpha_j}
\gamma)(f_1,\ldots,f_m)
\]
An easy computation yields
\[
\tilde{\Uscr}_\Gamma(\omega^j \gamma)=
(-1)^{j(j-1)/2}(-1)^{m(m-1)/2}
m!\HKR(\Tr(\Xi^{l_s}) \cdots\Tr(\Xi^{l_1})
\wedge\gamma)
\]
where we have extended $-\wedge-$ and $\HKR(-)$ to operations over $C^{\coord,L}$
and where $\Xi$ is the matrix $\eta_\alpha d(\partial_i\omega^j_\alpha)$ of 
elements of $C^{\coord}_1\ctimes \Omega^1_F$.  The entries of $\Xi$
have even total degree so the traces $\Tr(\Xi^l)$  commute. 

Now note the following simple identities
\[
\frac{j(j-1)}{2}=\frac{(\sum_i l_i)(\sum_i l_i-1)}{2}=\sum_{i<j} l_il_j
+\sum_i \frac{l_i(l_i-1)}{2}
\]
\[
\frac{(m+2j)(m+2j-1)}{2}=
\frac{m(m-1)}{2}+j\mod 2
\]
Collecting all signs we deduce
\[
W_\Gamma\tilde{\Uscr}_\Gamma(\omega^j \gamma)=
(-1)^{\sum_i l_i(l_i-1)/2}W_{l_1}\cdots W_{l_s}\HKR(\Tr(\Xi^{l_1})\cdots
\Tr(\Xi^{l_s})
\wedge\gamma)
\]
Putting temporarily 
\[
X_l=(-1)^{l(l-1)/2}W_l \Tr(\Xi^l)
\]
we find
\[
W_\Gamma\tilde{\Uscr}_\Gamma(\omega^j \gamma)=
\HKR( X_{l_1}\cdots
X_{l_s}
\wedge\gamma\rangle)
\]
Now we have to enumerate the number of possible graphs $\Gamma$.
Ordering the size of the wheels in increasing order we get a partition
$\tau=(1\cdots 1\cdots r\cdots r)$ where $i$ occurs $\tau_i$ times.
The number distinct graphs corresponding to such a partition is
\[
\frac{j!}{ \tau_1!\cdots \tau_r!\,1^{\tau_1}\cdots r^{\tau_r} }
\] 
Thus we find that
\[
\sum_{j\ge 0} \frac{1}{j!} \tilde{\Uscr}_{j+1}
(\omega^j\gamma)=\sum_{\tau_1,\tau_2,\ldots}
\frac{1}{ \tau_1!\tau_2!\cdots 1^{\tau_1}2^{\tau_2}\cdots }\HKR( X_{1}^{\tau_1}X_{2}^{\tau_2}\cdots 
\wedge\gamma)
\]
Formally we have
\[
e^{X_r/r}=\sum_{\tau_r} \frac{1}{ \tau_r!r^{\tau_r }} X^{\tau_r}_r
\]
so that we find
\[
\sum_{j\ge 0} \frac{1}{j!} \tilde{\Uscr}_{j+1}
(\omega^j\gamma)=
\HKR( e^{X_1+X_2/2+\cdots} \wedge\gamma)
\]
So if we put
\[
\Theta=\sum_l (-1)^{l(l-1)/2} \frac{1}{l}W_l \Xi^l
\]
then 
\begin{align*}
  \sum_{j\ge 0} \frac{1}{j!} \tilde{\Uscr}_{j+1}
  (\omega^j\gamma)&=
  \HKR(e^{\Tr(\Theta)} \wedge\gamma)\\
  &=\HKR(\det (e^{\Theta}) \wedge\gamma)
\end{align*}
This formula was proved under the assumption that $\gamma\in T_{\poly}(F)$.
However by linear extension it follows that it remains true if
$\gamma \in C^{\coord,L}\ctimes T_{\poly}(F)$. Thus our final formula is
\begin{equation}
\label{ref-9.6-122}
\tilde{\Uscr}_{\omega,1}=\HKR(\det (e^{\Theta}) \wedge -)
\end{equation}

\medskip

We now analyze the series $\Theta$ is more detail. We need to know the
value of $W_l$. As explained to us by Torossian this can be obtained
from the work of Cattaneo and Felder on the quantization of
coisotropic submanifolds \cite{CF2,CF3}.  See \cite[Thm
18]{Willwacher}.  As an alternative one can use a tedious but
elementary computation using Stokes theorem
\cite[(1.1)]{wheelweight}. The result is the following.
\begin{lemmas} 
\label{weightlemma} We have
\begin{equation}
\label{ref-9.7-123}
W_l=-(-1)^{(l+1)l/2}l\ss_l
\end{equation}
where $\ss_n$ is the $n$'th modified Bernouilli number $\ss_n$ which is defined
by
\[
\sum_l \ss_l x^l=\frac{1}{2}\log \frac{e^{x/2}-e^{-x/2}}{x}
\]
\end{lemmas}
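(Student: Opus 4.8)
The plan is to reduce the statement to the known evaluation of Kontsevich's ``wheel weights'' and then to carry out the sign bookkeeping dictated by our modified conventions. First I would make the graph explicit: the opposite wheel $\Sigma_l$ has $l+1$ vertices of the first type and none of the second type, namely $l$ rim vertices $1,\ldots,l$ carrying the directed cycle $k\to k{+}1$ (indices read modulo $l$) together with one central vertex $l{+}1$ carrying an edge $l{+}1\to k$ for each $k=1,\ldots,l$. Hence $|\Sigma_l|=2l=2(l{+}1)+0-2$, so $\kappa_{\Sigma_l}$ is a top form on $C^+_{l+1,0}$ and
\[
W_l=(-1)^{|\Sigma_l|(|\Sigma_l|-1)/2}\int_{C^+_{l+1,0}}\kappa_{\Sigma_l}=(-1)^{l}\int_{C^+_{l+1,0}}\kappa_{\Sigma_l}
\]
by \eqref{ref-8.1-110} and $2l(2l-1)/2\equiv l\bmod 2$. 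Thus the only genuine content is the value of this configuration-space integral in Kontsevich's original normalisation, together with the orientation sign of $C^+_{l+1,0}$ and the edge-ordering of $\kappa_{\Sigma_l}$ that we fixed before \eqref{ref-8.1-110}.

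For the integral itself I would invoke one of two sources. As Torossian explained to us, its value follows from Cattaneo and Felder's work on the quantisation of coisotropic submanifolds \cite{CF2,CF3}; the clean statement is \cite[Thm 18]{Willwacher}, which expresses $\int_{C^+_{l+1,0}}\kappa_{\Sigma_l}$ through the Taylor coefficients of $\tfrac12\log\frac{e^{x/2}-e^{-x/2}}{x}=\sum_l\ss_l x^l$ (the normalisation of the generating series accounting for an explicit factor $l$ per wheel). Alternatively one may quote the self-contained derivation of \cite[(1.1)]{wheelweight}: one applies Stokes' theorem on $\bar{C}^+_{l+1,0}$ to a closed form $\kappa_{\Gamma'}$ with $\Gamma'\in G_{l+1,0,1}$; by the standard Kontsevich vanishing lemmas, i.e.\ the subgraphs \eqref{ref-9.2-118}, \eqref{ref-9.3-119}, \eqref{ref-9.4-120}, all boundary strata collapse except the two families in which the central vertex merges with a single rim vertex, respectively in which the entire rim degenerates, and these surviving contributions assemble into precisely the functional equation characterising the modified Bernoulli numbers $\ss_l$.

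Finally I would collect the constants: the sign $(-1)^l=(-1)^{|\Sigma_l|(|\Sigma_l|-1)/2}$ from \eqref{ref-8.1-110}, the orientation sign fixed by our convention on $C^+_{n,m}$ (fix $p_1$ and pull back the standard orientation of $\CC^{l}\times\RR^{0}$), the sign hidden in the prescribed ordering of the $2l$ edges of $\Sigma_l$, and the normalisation factor $l$ built into the generating series; their product is $-(-1)^{(l+1)l/2}\,l$, yielding $W_l=-(-1)^{(l+1)l/2}\,l\,\ss_l$. The sole difficulty here is precisely this sign bookkeeping --- reconciling those three independent sign sources and the combinatorial factor $l$ with the conventions of \cite{Willwacher,wheelweight} --- since no analytic input is needed beyond the cited evaluation of the configuration-space integral.
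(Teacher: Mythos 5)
Your proposal is correct and takes essentially the same route as the paper: the paper likewise gives no self-contained computation of $W_l$, but reduces the lemma to the known evaluation of the outward-spoked wheel integral, quoted from Cattaneo--Felder via \cite[Thm 18]{Willwacher} or alternatively from the Stokes-theorem computation of \cite[(1.1)]{wheelweight}, combined with the sign bookkeeping forced by the modified convention \eqref{ref-8.1-110}. Your explicit description of $\Sigma_l$ as a top-degree graph in $G_{l+1,0}$ and the extraction of the factor $(-1)^{l}=(-1)^{|\Sigma_l|(|\Sigma_l|-1)/2}$ agree with the conventions fixed in \S\ref{ref-8-109}.
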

Let us finally also mention that a result similar to
\eqref{ref-9.7-123} was announced by Shoikhet
in \cite[\S2.3.1]{Shoikhet}.  It can presumably be obtained from the
methods in \cite{Shoikhet1}.

Substituting we find
\begin{align*}
\Theta&=-\sum_l (-1)^l \ss_l \Xi^l\\
&=-\frac{1}{2}\log \frac{e^{\Xi/2}-e^{-\Xi/2}}{\Xi}
\end{align*}
and hence
\begin{equation}
\label{ref-9.8-124}
e^{\Theta}=\sqrt{\frac{\Xi}{e^{\Xi/2}-e^{-\Xi/2}}}
\end{equation}
It will be convenient for a module $N$ with a connection to introduce the
modified Todd class as follows
\[
\widetilde{\td}(N)=\det(\tilde{q}(A(N)))
\]
where

\[
\tilde{q}(x)=\frac{x}{e^{x/2}-e^{-x/2}}
\]

It follows from Proposition \ref{ref-7.4.2-107} that the following diagram
is commutative. 
\[
\begin{CD}
T_{\poly,C^{\aff,L}}(C^{\aff,L}\ctimes_{R_1} JL) @>>> C^{\coord,L}\ctimes T_{\poly}(F)\\
@V \widetilde{\td}(\Nscr^{\aff})^{1/2}\wedge-VV @VV \det e^\Theta\wedge-V\\
T_{\poly,C^{\aff,L}}(C^{\aff,L}\ctimes_{R_1} JL) @>>> C^{\coord,L}\ctimes T_{\poly}(F)
\end{CD}
\]
Thus we get 
\begin{equation}
\label{ref-9.9-125}
\Vscr^{\aff}_1=\HKR\circ(\widetilde{\Td}(\Nscr^{\aff})^{1/2}\wedge -)
\end{equation}

\subsection{The global case}
\label{ref-9.2-126}
Now we globalize things. Let $(\Cscr,\Oscr,\Lscr)$ be as in
\S\ref{ref-6.4-88}. Then it follows from Proposition
\ref{ref-7.4.2-107} (specifically \eqref{ref-7.10-108}) that the following diagram is commutative in
$D(\Oscr)$.
\[
\begin{CD}
  T^{\Lscr}_{\poly}(\Oscr)@>\cong>>
  T_{\poly,C^{\aff,\Lscr}}(C^{\aff,L}\ctimes_{\Oscr_1} J\Lscr)\\
@V \widetilde{\Td}(\Lscr)^{1/2}\wedge- VV @VV \widetilde{\Td}(\Nscr^{\aff})^{1/2}\wedge- V\\
T^{\Lscr}_{\poly}(\Oscr)@>>\cong>
  T_{\poly,C^{\aff,\Lscr}}(C^{\aff,L}\ctimes_{\Oscr_1} J\Lscr)
\end{CD}
\]
so that we get a commutative diagram in $D(\Oscr)$.
\[
\begin{CD}
  T^{\Lscr}_{\poly}(\Oscr)@>\cong>>
  T_{\poly,C^{\aff,\Lscr}}(C^{\aff,L}\ctimes_{\Oscr_1} J\Lscr)\\
@V\HKR(\widetilde{\Td}(\Lscr)^{1/2}\wedge-) VV @VV  \Vscr^{\aff}_1 V\\
D^{\Lscr}_{\poly}(\Oscr)@>>\cong>
  D_{\poly,C^{\aff,\Lscr}}(C^{\aff,L}\ctimes_{\Oscr_1} J\Lscr)
\end{CD}
\]
where $\widetilde{\Td}(\Lscr)^{1/2}$ is as defined in the introduction. Since the
horizontal isomorphisms as well as $\Vscr_1^{\aff}$ are Gerstenhaber
algebra morphisms (Lemma \ref{ref-9.1.1-117}) the same holds for
$\HKR(\widetilde{\Td}(\Lscr)^{1/2}\wedge-)$ as well.  This finishes the proof of Theorem
\ref{ref-1.3-0} in the case $k$ contains the reals and the Todd class is
replaced by the modified Todd class.

\subsection{Proof for the ordinary Todd class}
We have
\begin{align*}
\tilde{\Td}(\Lscr)&= \Td(\Lscr)\det(e^{-A(\Lscr)/2})\\
&=\Td(\Lscr)e^{-\Tr(A(\Lscr))/2}\\
&=\Td(\Lscr)e^{-a_1(\Lscr)/2}
\end{align*}
In other words it is sufficient to prove that $e^{-a_1(\Lscr)/4}\wedge-$ 
defines an automorphism of $T^{\Lscr}_{\poly}(\Oscr)$ as a Gerstenhaber algebra
in $D(X)$. 

We may as well prove that
$e^{-\Tr(\Xi)/4}\wedge-$ is compatible with the Lie bracket and the
cupproduct on $C^{\coord}\ctimes T_{\poly}(F)$ or equivalently that
$\Tr(\Xi)\wedge-$ is a derivation for these operations.  We have
$\Tr(\Xi)=\sum_{i,\alpha}\eta_\alpha d(\partial_i\omega^i_\alpha)$.
Put $b_\alpha=\sum_i\partial_i\omega^i_\alpha$. Since everying is
$C^{\coord,\Lscr}$ linear it is suffient to prove that $db_\alpha\wedge-$ is
a derivation for the Lie algebra and cupproduct on $T_{\poly}(F)$. 

Since this fact is clear for the cupproduct we only look at the Lie bracket.
For $D,E\in T^0_{\poly}(F)$ we have 
\begin{align*}
db_\alpha\wedge [D,E]&=[D,E](b_\alpha)\\
&=D(E(b_{\alpha})-E(D(b_\alpha))\\
&=D(db_{\alpha}\wedge E)-E(db_{\alpha}\wedge D )\\
&=[D,db_{\alpha}\wedge E]+[db_{\alpha}\wedge D,E]
\end{align*}
finishing the proof.
\subsection{Arbitrary base fields}
Now we let $k$ be arbitrary (of characteristic zero) and we choose an
embedding $k\subset \CC$.  It follows from the formulas \eqref{ref-9.6-122} and
\eqref{ref-9.8-124} that $\tilde{\Uscr}_{\omega,1}$, while initially defined
over $\CC$ ($\RR$ in fact), actually descends to $k$.  We will denote
the descended morphism by $u$.

Now note the following. 
\begin{propositions}
\begin{enumerate}
\item There exist $w_\Gamma\in k$ for $\Gamma\in G_{m,n}$, which are
zero when $W_\Gamma$ is zero,  such that for
\[
l(\alpha,\beta)=\sum_{n,m\ge 0, \Gamma\in G_{n,m}} w_\Gamma\Uscr_\Gamma(\alpha\beta
\omega^{n-2})
\]
we have
\begin{equation}
\label{ref-9.10-127}
[u(\alpha),u(\beta)]
-u([\alpha,\beta])
+d(l(\alpha,\beta))-l(d\alpha,\beta)-(-1)^{|\alpha|}l(\alpha,d\beta)=0
\end{equation}
\item
There exist $\tilde{w}_\Gamma\in k$ for $\Gamma\in G_{m,n,1}$, which are zero
when $\tilde{W}_\Gamma$ is zero, such that for
\[
h(\alpha,\beta)=\sum_{n,m\ge 0, \Gamma\in G_{n,m,1}} \tilde{w}_\Gamma\Uscr_\Gamma(\alpha\beta
\omega^{n-2})
\]
we have
\begin{equation}
\label{ref-9.11-128}
u(\alpha)\cup u(\beta)
-u(\alpha\cup\beta)
+d(h(\alpha,\beta))-h(d\alpha,\beta)-(-1)^{|\alpha|}h(\alpha,d\beta)=0
\end{equation}
\end{enumerate}
\end{propositions}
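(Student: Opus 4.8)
The plan is to reduce both statements to elementary linear algebra over $\mathbb{Q}$. The key observation I would exploit is that the descended morphism $u$ and \emph{all} operations appearing in \eqref{ref-9.10-127} and \eqref{ref-9.11-128} are defined over $\mathbb{Q}$ as universal operations, so that the only a priori transcendental data are precisely the coefficients we are asked to produce. Indeed, by \eqref{ref-9.6-122}, \eqref{ref-9.8-124} and Lemma \ref{weightlemma} one has $u=\operatorname{HKR}\bigl(\det(e^{\Theta})\wedge-\bigr)$, where $e^{\Theta}$ is obtained by substituting the matrix $\Xi$ into the power series $\sqrt{x/(e^{x/2}-e^{-x/2})}$, which lies in $\mathbb{Q}[[x]]$ because the modified Bernoulli numbers $\ss_l$ are rational; the HKR map \eqref{ref-3.1-12}, the Hochschild differential, the Gerstenhaber bracket, the cupproduct on $T_{\poly}(F)$ and $D_{\poly}(F)$, and each operator $\Uscr_\Gamma$ are all defined over $\mathbb{Q}$ (in fact over $\mathbb{Z}$) as universal operations; and the Maurer--Cartan equation satisfied by $\omega$ has rational coefficients. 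Writing $\omega=\sum_\alpha\eta_\alpha\omega_\alpha$ and treating the $\omega_\alpha$, $\eta_\alpha$ as formal symbols subject only to that Maurer--Cartan relation, each of \eqref{ref-9.10-127} and \eqref{ref-9.11-128} — read at a fixed arity, i.e.\ with a fixed number $m$ of function-type vertices so that only finitely many graphs occur — becomes, after expansion in a $\mathbb{Q}$-basis of the space of universal operations of the relevant arity and degree, a finite inhomogeneous system of $\mathbb{Q}$-linear equations in the unknowns $w_\Gamma$ (resp.\ $\tilde w_\Gamma$), with $\Gamma\in G_{n,m}$ running over those with $W_\Gamma\ne0$ (resp.\ $\Gamma\in G_{n,m,1}$ with $\tilde W_\Gamma\ne0$), the remaining coefficients being put equal to $0$.

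Next I would record that these systems are solvable over $\mathbb{R}$ (equivalently $\mathbb{C}$). For \eqref{ref-9.10-127} this is the arity-two relation of the $L_\infty$-morphism $\tilde\Uscr_\omega$: its Taylor coefficient $\tilde\Uscr_{\omega,2}$ is a homotopy witnessing the compatibility of $u=\tilde\Uscr_{\omega,1}$ with the bracket, and by \eqref{ref-5.4-56} together with the definition of $\Uscr_n$ one has $\tilde\Uscr_{\omega,2}(\alpha,\beta)=\sum_{n\ge2}\frac{1}{(n-2)!}\sum_{m,\,\Gamma\in G_{n,m}}W_\Gamma\Uscr_\Gamma(\alpha\beta\omega^{n-2})$, i.e.\ the real solution $w_\Gamma=\frac{1}{(n-2)!}W_\Gamma$. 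For \eqref{ref-9.11-128}, Proposition \ref{ref-8.1-112} furnishes the identity over $\mathbb{R}$ with $h$ the homotopy $H$ of \eqref{ref-8.3-113} (up to the evident bookkeeping of signs), i.e.\ with $\tilde w_\Gamma$ a fixed rational multiple of $\tilde W_\Gamma$. In both cases the exhibited real solution vanishes precisely when $W_\Gamma$, resp.\ $\tilde W_\Gamma$, vanishes, so it is a bona fide solution of the restricted systems described above.

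It then remains to descend: a finite system of linear equations with coefficients in $\mathbb{Q}$ that admits a solution in some field extension of $\mathbb{Q}$ admits a solution over $\mathbb{Q}$, since solvability amounts to equality of the ranks of the coefficient matrix and the augmented matrix, and ranks are insensitive to field extension. Applying this arity by arity yields solutions $(w_\Gamma)$ and $(\tilde w_\Gamma)$ over $\mathbb{Q}\subseteq k$, with $w_\Gamma=0$ whenever $W_\Gamma=0$ and $\tilde w_\Gamma=0$ whenever $\tilde W_\Gamma=0$, which is exactly assertions (1) and (2). The only point in this argument that is not purely formal is the one underlying the first paragraph: one must verify that $u$ and the remaining building blocks really are governed by rational universal data — so that ``system of $\mathbb{Q}$-linear equations'' is meaningful, and so that passing from an identity of operations on the given Lie algebroid to an identity of universal operations over $\mathbb{Q}$ loses nothing (it does not, because the pair $(R,L)$ is arbitrary and the $\omega$ at hand is a specialization of the formal Maurer--Cartan solution). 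I expect this set-up of the $\mathbb{Q}$-structure is where the care is needed; once it is in place, the rest is the rank argument just quoted.
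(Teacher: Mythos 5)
Your proposal is correct and matches the paper's proof in its essential strategy: both observe that \eqref{ref-9.10-127} and \eqref{ref-9.11-128} are linear in the unknown coefficients, that the $L_\infty$-relations for $\tilde{\Uscr}_\omega$ together with Proposition \ref{ref-8.1-112} supply a solution over $\RR$ (namely rational multiples of $W_\Gamma$, resp.\ $\tilde{W}_\Gamma$), and that one then descends by elementary linear algebra. The only difference is the descent mechanism: where you set up a $\QQ$-structure on the universal operations and invoke invariance of rank under field extension (the point you rightly flag as delicate), the paper simply applies an arbitrary $k$-linear projection $\CC\to k$ fixing $1$ to the complex solution of the $k$-defined linear system, which sidesteps the $\QQ$-rationality discussion entirely.
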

\begin{proof} The equations \eqref{ref-9.10-127}\eqref{ref-9.11-128} are linear in $w_\Gamma$,
$\tilde{w}_\Gamma$. By  the fact that  $\tilde{\Uscr}_\omega$ 
is a $L_\infty$-morphism and Proposition \ref{ref-8.1-112}
there is a solution over $\CC$.
Hence there is
a solution over $k$ (for example obtained by applying an arbitrary projection
$\CC\r k$). 
\end{proof}
We can now proceed as in \S\ref{ref-9.1-115},\ref{ref-9.2-126}
(using an analogue of Lemma \ref{ref-9.1.1-117}) to construct
a commutative diagram
\[
\xymatrix{ \scriptstyle T_{\poly}^{\Lscr_2}(\Oscr_2)\ar[r] &
  \scriptstyle
  T_{\poly,C^{\aff,\Lscr}}(C^{\aff,\Lscr}\ctimes_{\Oscr_1}
  J\Lscr)\ar[r] \ar[d]^{v^{\aff}} & \scriptstyle
  T_{\poly,C^{\coord,\Lscr}}(C^{\coord,\Lscr}\ctimes_{\Oscr_1} J\Lscr)
  \ar[r]^-\cong\ar[d]^{v^{\coord}}& \scriptstyle
  (C^{\coord,\Lscr}\ctimes T_{\poly}(F))_{\omega}
  \ar[d]^{u}\\
  \scriptstyle D_{\poly}^{\Lscr_2}(\Oscr_2)\ar[r] & \scriptstyle
  D_{\poly,C^{\aff,\Lscr}}(C^{\aff,\Lscr}\ctimes_{\Oscr_1}
  J\Lscr)\ar[r] & \scriptstyle
  D_{\poly,C^{\coord,\Lscr}}(C^{\coord,\Lscr}\ctimes_{\Oscr_1} J\Lscr)
  \ar[r]_-\cong& \scriptstyle (C^{\coord,\Lscr} \ctimes
  D_{\poly}(F))_{\omega} }\]
where $v^{\aff}$ commutes both with the Lie bracket and the cupproduct
up to a global homotopy. Using the fact that the formula \eqref{ref-9.6-122} 
continues to hold
\[
u=\HKR\circ(\det(\Theta)\wedge -)
\] 
we can now continue as in \S\ref{ref-9.2-126} to finish the proof of Theorem
\ref{ref-1.3-0}. 
\def\cprime{$'$} \def\cprime{$'$} \def\cprime{$'$}
\ifx\undefined\bysame
\newcommand{\bysame}{\leavevmode\hbox to3em{\hrulefill}\,}
\fi

\end{document}